\tikzset{>=stealth'}
\def\arrowLengthDisplayStyle{4ex}
\def\arrowHeightDisplayStyle{.8ex}
\def\arrowSkipDisplayStyle{.5ex}
\def\arrowLengthTextStyle{3ex}
\def\arrowHeightTextStyle{.8ex}
\def\arrowSkipTextStyle{.4ex}
\def\arrowLengthScriptStyle{2.5ex}
\def\arrowHeightScriptStyle{.6ex}
\def\arrowSkipScriptStyle{.3ex}
\def\arrowLengthScriptScriptStyle{2ex}
\def\arrowHeightScriptScriptStyle{.4ex}
\def\arrowSkipScriptScriptStyle{.2ex}
\renewcommand{\mapsto}{\arrow{|->}}
\newcommand{\MakeTikzArrowWithSuperscriptSubscript}[4]
{
	\mathchoice
	{ 
		\hspace*{\arrowSkipDisplayStyle}
		\begin{tikzpicture}[baseline]
		\draw [#1] (0,\arrowHeightDisplayStyle) -- node [above] {$#2$} node [below] {$#3$} (#4 * \arrowLengthDisplayStyle, \arrowHeightDisplayStyle);
		\end{tikzpicture}
		\hspace*{\arrowSkipDisplayStyle}
	}
	{ 
		\hspace*{\arrowSkipTextStyle}
		\begin{tikzpicture}[baseline]
		\draw [#1] (0,\arrowHeightTextStyle) -- node [above] {$\scriptstyle #2$} node [below] {$\scriptstyle #3$} (#4 * \arrowLengthTextStyle, \arrowHeightTextStyle);
		\end{tikzpicture}
		\hspace*{\arrowSkipTextStyle}
	}
	{ 
		\hspace*{\arrowSkipScriptStyle}
		\begin{tikzpicture}[baseline]
		\draw [#1] (0,\arrowHeightScriptStyle) -- node [above] {$\scriptscriptstyle #2$} node [below] {$\scriptscriptstyle #3$} (#4 * \arrowLengthScriptStyle, \arrowHeightScriptStyle);
		\end{tikzpicture}
		\hspace*{\arrowSkipScriptStyle}
	}
	{ 
		\hspace*{\arrowSkipScriptScriptStyle}
		\begin{tikzpicture}[baseline]
		\draw [#1] (0,\arrowHeightScriptScriptStyle) -- node [above] {$\scriptscriptstyle #2$} node [below] {$\scriptscriptstyle #3$} (#4 * \arrowLengthScriptScriptStyle, \arrowHeightScriptScriptStyle);
		\end{tikzpicture}
		\hspace*{\arrowSkipScriptScriptStyle}
	}
}
\newcommand{\MakeTikzArrowWithCentralLabel}[3]
{
	\mathchoice
	{ 
		\hspace*{\arrowSkipDisplayStyle}
		\begin{tikzpicture}[baseline]
		\draw [#1] (0,\arrowHeightDisplayStyle) -- node [fill=white,inner sep=1pt] {$#2$} (#3 * \arrowLengthDisplayStyle, \arrowHeightDisplayStyle);
		\end{tikzpicture}
		\hspace*{\arrowSkipDisplayStyle}
	}
	{ 
		\hspace*{\arrowSkipTextStyle}
		\begin{tikzpicture}[baseline]
		\draw [#1] (0,\arrowHeightTextStyle) -- node [fill=white,inner sep=1pt] {$\scriptstyle #2$} (#3 * \arrowLengthTextStyle, \arrowHeightTextStyle);
		\end{tikzpicture}
		\hspace*{\arrowSkipTextStyle}
	}
	{ 
		\hspace*{\arrowSkipScriptStyle}
		\begin{tikzpicture}[baseline]
		\draw [#1] (0,\arrowHeightScriptStyle) -- node [fill=white,inner sep=1pt] {$\scriptscriptstyle #2$} (#3 * \arrowLengthScriptStyle, \arrowHeightScriptStyle);
		\end{tikzpicture}
		\hspace*{\arrowSkipScriptStyle}
	}
	{ 
		\hspace*{\arrowSkipScriptScriptStyle}
		\begin{tikzpicture}[baseline]
		\draw [#1] (0,\arrowHeightScriptScriptStyle) -- node [fill=white,inner sep=1pt] {$\scriptscriptstyle #2$} (#3 * \arrowLengthScriptScriptStyle, \arrowHeightScriptScriptStyle);
		\end{tikzpicture}
		\hspace*{\arrowSkipScriptScriptStyle}
	}
}
\def\arrow#1{\def\lastArrowStyle{#1}
	\futurelet\testchar\arrowMaybeStreched}
\def\arrowMaybeStreched{\ifx[\testchar \let\next\arrowStreched
	\else \let\next\arrowUnstreched \fi
	\next}
\def\arrowStreched[#1]{\def\lastArrowStrech{#1}
	\futurelet\testchar\arrowMaybeLabel}
\def\arrowUnstreched{\def\lastArrowStrech{1}
	\futurelet\testchar\arrowMaybeLabel}
\def\arrowMaybeLabel{\ifx^\testchar \let\next\arrowSuperscript
	\else \ifx_\testchar \let\next\arrowSubscript
	\else \ifx~\testchar \let\next\arrowCentralLabel
	\else \let\next\arrowNoLabel
	\fi
	\fi
	\fi
	\next}
\def\arrowSuperscript^#1{\def\lastArrowSuperscript{#1}
	\futurelet\testchar\arrowSuperMaybeSub}
\def\arrowSuperMaybeSub{\ifx_\testchar \let\next\arrowSuperscriptSubscript
	\else \let\next\arrowSuperscriptNoSubscript \fi
	\next}
\def\arrowSubscript_#1{\def\lastArrowSubscript{#1}
	\futurelet\testchar\arrowSubMaybeSuper}
\def\arrowSubMaybeSuper{\ifx^\testchar \let\next\arrowSubscriptSuperscript
	\else \let\next\arrowSubscriptNoSuperscript \fi
	\next}
\def\arrowSuperscriptSubscript_#1{\def\lastArrowSubscript{#1}
	\arrowDrawSupSub}
\def\arrowSuperscriptNoSubscript{\def\lastArrowSubscript{}
	\arrowDrawSupSub}
\def\arrowSubscriptSuperscript^#1{\def\lastArrowSuperscript{#1}
	\arrowDrawSupSub}
\def\arrowSubscriptNoSuperscript{\def\lastArrowSuperscript{}
	\arrowDrawSupSub}
\def\arrowNoLabel{\def\lastArrowSuperscript{}
	\def\lastArrowSubscript{}
	\arrowDrawSupSub}
\def\arrowCentralLabel~#1{\MakeTikzArrowWithCentralLabel{\lastArrowStyle}{#1}{\lastArrowStrech}}
\def\arrowDrawSupSub{\MakeTikzArrowWithSuperscriptSubscript{\lastArrowStyle}{\lastArrowSuperscript}{\lastArrowSubscript}{\lastArrowStrech}}
\newcommand\blfootnote[1]{%
  \begingroup
  \renewcommand\thefootnote{}\footnote{#1}%
  \addtocounter{footnote}{-1}%
  \endgroup
}
\theoremstyle{definition}
\newtheorem{thm}{Theorem}[section]
\newtheorem{lemma}[thm]{Lemma}
\newtheorem{theorem}[thm]{Theorem}
\newtheorem{corollary}[thm]{Corollary}
\newtheorem{proposition}[thm]{Proposition}
\newtheorem{example}[thm]{Example}
\newtheorem{definition}[thm]{Definition}
\def\Map{\mathop{\rm Map}\nolimits}
\def\map{\mathop{\rm map}\nolimits}
\def\Fun{\mathop{\rm Fun}\nolimits}
\def\op{\mathop{\rm op}\nolimits}
\def\dirlim{\mbox{\,\hbox{lim}\kern-1.5em
                  \lower1.5ex\hbox{$\longrightarrow$}\,}}
\def\Z{{\mathbb{Z}}}
\def\R{{\mathbb{R}}}
\def\C{{\mathcal{C}}}
\def\D{{\mathcal{D}}}
\def\S{\mathcal{S}}
\def\Ab{\text{(Abelian groups)}}
\def\sSet{\text{(simplicial sets)}}
\def\Top{\text{(topological spaces)}}
\def\sTop{\text{(simplicial spaces)}}
\def\Cat{\text{(small categories)}}
\def\Gpoid{\text{(groupoids)}}
\def\topcat{\text{(topological categories)}}
\def\over{\text{(spaces over }\mathbb{R}\text{)}}
\def\ob{\mathrm{ob}}
\def\mor{\mathrm{mor}}
\def\nerve{\mathrm{N}}
\def\classifying{\mathrm{B}}
\def\reeb{\mathrm{R}}
\def\homology{\mathrm{H}}
\def\page{\mathrm{E}}
\def\sk{\mathrm{sk}}
\def\id{\mathrm{id}}
\def\pr{\mathrm{pr}}
\def\ev{\mathrm{eval}}
\def\Crm{\mathrm{C}}
\newcommand{\twopartdef}[4]
{
	\left\{
		\begin{array}{ll}
			#1 & \mbox{if } #2 \\
			#3 & \mbox{if } #4
		\end{array}
	\right.
}
\DeclareMathAlphabet{\mathcal}{OMS}{cmsy}{m}{n}	
\DeclareRobustCommand{\coprod}{\mathop{\text{\fakecoprod}}}
\newcommand{\fakecoprod}{%
  \sbox0{$\prod$}%
  \smash{\raisebox{\dimexpr.9625\depth-\dp0}{\scalebox{1}[-1]{$\prod$}}}%
  \vphantom{$\prod$}%
}	
\title{ Combinatorial models for topological Reeb spaces}
\author{Paul Trygsland}
\date{}
\begin{document}

\maketitle

\textbf{Abstract.} There are two rather distinct approaches to Morse theory nowadays: smooth and discrete. We propose to study a real valued function by assembling all associated sections in a topological category. From this point of view,~\emph{Reeb functions} on stratified spaces are introduced, including both smooth and combinatorial examples. As a consequence of the simplicial approach taken, the theory comes with a spectral sequence for computing (generalized) homology. We also model the homotopy type of Reeb graphs/ topological Reeb spaces as simplicial sets, which are combinatorial in nature, as opposed to the typical description in terms of quotient spaces. \blfootnote{paul.trygsland@ntnu.no}\blfootnote{Departement of Mathematical Sciences, Norwegian University of Science and Technology, 7491 Trondheim, Norway}

\section{Introduction}

Let~$X$ be a topological space and~$f\colon X \rightarrow \R$ a continuous function on it. A \emph{section}~$\sigma$ of~$f$ is a map~\hbox{$ [a,b] \rightarrow X$}, for some real numbers $a\leq b$, subject to~$f\circ \sigma (c)=c$. Two sections~$\sigma\colon [a,b]\rightarrow X$ and~$\rho\colon [b,c]\rightarrow X$, such that~$\sigma(b)=\rho(b)$, may be concatenated into a new section~$\rho\circ \sigma\colon [a,c]\rightarrow X$. This data defines the \emph{section category}~$\S_f$ associated to~$f$ which is in fact a topological category. The nerve construction thus provides a simplicial topological space~$\nerve\S_f$. We did not put any constraints on~$f$ as of yet. However, if the section category is to recover the homotopical information of~$X$ by realizing~$\nerve\S_f$, some assumptions are necessary. This should be considered motivation for the concept of \emph{Reeb functions} which requires~$f$ to be sufficiently `nice'. Examples include Morse functions on smooth manifolds and piecewise linear functions on CW complexes. I refer to Definition~\ref{definition:reebFunction} for a precise formulation.
\begin{theorem}
\label{intro:mainresult}
For any Reeb function~$f\colon X \rightarrow \R$, the realization of the nerve of the section category of~$f$ is weakly equivalent to~$X$, that is~$X\simeq |\nerve\S_f|$.
\end{theorem}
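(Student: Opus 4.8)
The plan is to exhibit an explicit evaluation map $\mathrm{ev}\colon |\nerve\S_f|\to X$ and to show it is a weak equivalence by producing a deformation retraction. Recall that an $n$-simplex of $\nerve\S_f$ is a composable chain of sections, equivalently a single section $\sigma\colon[a_0,a_n]\to X$ together with a subdivision $a_0\le a_1\le\cdots\le a_n$, so a point of the realization is represented by such data together with barycentric coordinates $\vec t=(t_0,\dots,t_n)$, $\sum_i t_i=1$. I would set $\mathrm{ev}[\vec t;\sigma,\vec a]=\sigma\bigl(\sum_i t_i a_i\bigr)$, which is legitimate since $c:=\sum_i t_i a_i\in[a_0,a_n]$. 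First I would check that $c$ is invariant under all simplicial operators—for $\alpha\colon[m]\to[n]$ one has $\sum_j(\alpha_*\vec t)_j a_j=\sum_i t_i a_{\alpha(i)}$—so that $\mathrm{ev}$ descends to $|\nerve\S_f|$, and that it is continuous for the Moore-type topology on $\mor\S_f$. The inclusion of objects $\iota\colon X=\nerve_0\S_f\hookrightarrow|\nerve\S_f|$, sending $x$ to the constant section at $x$, satisfies $\mathrm{ev}\circ\iota=\id_X$, so $X$ is a retract and it remains to prove $\iota\circ\mathrm{ev}\simeq\id$.

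For the homotopy I would shrink every section onto its evaluation point. With $c=c(\vec t,\vec a)$ as above, put $a_i(s)=c+s(a_i-c)$ and define $H_s[\vec t;\sigma,\vec a]=[\vec t;\sigma|_{[a_0(s),a_n(s)]},\vec a(s)]$, keeping the barycentric coordinates fixed. Then $\mathrm{ev}\circ H_s=\mathrm{ev}$ and $H_1=\id$, while at $s=0$ all marked points collapse to $c$, so $H_0$ lands in the degenerate simplices and equals $\iota\circ\mathrm{ev}$; on $\iota(X)$ one has $H_s=\id$ throughout. The one genuinely simplicial point to verify is that $H_s$ respects the gluing relations of the realization: since $c$ is operator-invariant, and since pulling back along $\alpha$ and then shrinking produces the same section over $[a_{\alpha(0)}(s),a_{\alpha(m)}(s)]$ as first shrinking and then restricting, the two computations of $H_s$ on $(\alpha_*\vec t,\xi)$ and $(\vec t,\alpha^*\xi)$ agree. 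Granting continuity, $H$ is then a strong deformation retraction of $|\nerve\S_f|$ onto $\iota(X)$.

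The crux—and the place where the Reeb hypothesis must do its work—is point-set and homotopical rather than formal: the construction above is only meaningful once $\nerve\S_f$ is a \emph{good} (proper, i.e.\ Reedy cofibrant) simplicial space, so that $|\nerve\S_f|$ carries the correct homotopy type and $\iota(X)$ really inherits the topology of $X$, and once $\mor\S_f$ is tame enough that $\mathrm{ev}$ and the shrinking map are continuous. I expect this to be precisely what Definition~\ref{definition:reebFunction} is arranged to supply: near regular values $f$ is a fibration and a section is just a path in the fibre, so $\mor\S_f$ is a manageable Moore path space, whereas the conditions at the critical/singular values control the degeneracy inclusions $\ob\S_f\hookrightarrow\mor\S_f$ and keep them cofibrations.

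If one prefers not to lean on a single global deformation, the same conclusion should follow locally. I would cover $\R$ by open intervals each meeting at most one critical value, observe that $\S_f$ restricts compatibly to the preimages $f^{-1}(I)$, verify the statement over each such piece—where the fibration picture makes $\mathrm{ev}$ a weak equivalence directly—and over the (interval) intersections, and then assemble by descent, using that both $X$ and $|\nerve\S_f|$ are homotopy colimits over this cover. In either route I expect the main obstacle to be identical: controlling the topology of the section spaces at the critical values, so that the local models glue and the realization is homotopy-invariant. This is exactly where the precise force of the notion of a Reeb function is needed, and it is the reason the statement is phrased as a weak, rather than a strict, equivalence.
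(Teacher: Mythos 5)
Your evaluation map is exactly the paper's comparison map $\phi$, but the core of your argument --- the strong deformation retraction $H_s$ --- is not continuous, and this is a fatal gap rather than a technicality. The object and morphism spaces of the section category are \emph{disjoint unions} over real parameters: $\ob\S_f=\coprod_{a}f^{-1}a$ and $\mor\S_f=\coprod_{a\leq b}\S_f[a,b]$. Consequently, at every simplicial level the decorating sequence $\vec a$ is locally constant: sections over different intervals lie in different clopen summands of $(\nerve\S_f)_n$. Your homotopy fixes an interior barycentric coordinate $\vec t$ and slides the subdivision $\vec a(s)=c+s(\vec a-c)$, so for distinct $s\in(0,1]$ the points $H_s[\vec t;\sigma,\vec a]$ lie in the interiors of distinct cells of the realization. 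Already for $f=\id_\R$, where $\nerve\S_f$ is the levelwise-discrete nerve of the poset $(\R,\leq)$ and $|\nerve\S_f|$ is an honest CW complex, the path $s\mapsto H_s(x)$ meets uncountably many open cells, which no continuous map from $[0,1]$ can do (a compact subset of a CW complex meets only finitely many cells). For the same reason your $\iota\colon X\to|\nerve\S_f|$ is not continuous --- level zero of the realization is $\coprod_a f^{-1}a$, not $X$ --- so $X$ is not literally a retract, and no strong deformation retraction of the proposed kind can exist. This is precisely why the statement is only a weak equivalence, and why the paper's homotopies never move the decorating interval of a section: the spine retraction (Lemma~\ref{lemma:spine}) moves only the barycentric coordinate within a fixed simplex, and the main deformation (Lemma~\ref{lemma:deformationRetract}) moves the section within the fixed clopen component $\S_f[a_{n-1},a_n]$ using the flow-line families of Proposition~\ref{proposition:modifiedFlowLines}.

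Your fallback paragraph is, in outline, the paper's actual proof --- cover $\R$ by open intervals each containing at most one critical value and assemble via homotopy colimits --- but the step you dispose of with ``the fibration picture makes $\mathrm{ev}$ a weak equivalence directly'' is where all the work lies. Even over an interval with at most one critical value one must: (i) show that maps from compacta factor through the finite subspaces $\langle\bar a,\Delta^n\rangle$ (Lemma~\ref{lemma:compactIntoCfA}); (ii) deformation retract these onto the spine; and (iii) contract $\langle\bar a,\mathrm{sp}\Delta^n\rangle$ onto a fiber using the \emph{continuously parametrized} family of sections that the Reeb hypothesis supplies via Proposition~\ref{proposition:modifiedFlowLines}. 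Note also that a section is a lift of the identity on an interval, not ``a path in the fibre''; the equivalence $\S_f[c,d]\simeq f^{-1}a$ between section spaces and fibers is itself a consequence of the flow-line construction (cf.\ Proposition~\ref{prop:finite}), not a definitional unwinding of a fibration. As written, the proposal does not prove the statement; its correct skeleton (the cover-and-descend strategy) is exactly the paper's, but the continuity obstructions you defer to the Reeb hypothesis are not resolved by it in the way you suggest --- they force a reorganization of the homotopies, not merely point-set hygiene.
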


Ralph L. Cohen, John D. S. Jones and Graeme B. Segal prove a similar result for Morse functions in~\cite{cohen1995morse} as an attempt to better understand homotopical aspects of Morse theory~\cite{cohen1995floer}. A purely combinatorial analogue can be found in~\cite{nanda2018discrete} which covers the discrete Morse theory of Robin Forman~\cite{forman1998morse}. Our work can thus be described as an attempt to find a common framework including both smooth and combinatorial examples.

Any simplicial topological space comes with a spectral sequence for computing the generalized homology of its classifying space~\cite{segal1968classifying}. A shortcoming of the section category~$\S_f$ is that its classifying space is huge, hence nowhere near computationally feasible. Reeb functions provide a way to extract the essential information in~$\S_f$ into the much smaller \emph{critical subcategory}~$\C_f$ whose classifying space has unchanged homotopy type when compared to~$\S_f$. Computing the homology of~$X$ via~$\C_f$, as opposed to~$\S_f$, is analogous to how Morse and CW homology reduces the complexity of singular homology. I refer to Section~\ref{section:spectralsequence} for some basic algebraic properties together with a user-guide on how to carry out computations.

Consider a continuous function~$f\colon X\rightarrow \R$ on a topological space~$X$. The~\emph{topological Reeb space}~$\reeb_f$, often referred to as the Reeb graph, was introduced by Georges H. Reeb in~\cite{reeb1946points} to study singularities. Later on it was popularized in computer graphics due to the work of Y. Shinagawa, T. Kunii and Y. Kergosien~\cite{shinagawa1991surface}. Since then there has been several applications in shape analysis~\cite{biasotti2008reeb}. This advertises the need to better understand combinatorial properties of the topological Reeb space~$\reeb_f$, commonly constructed as a certain quotient space of~$X$ depending on the extra data that is~$f$.  I refer to categorified Reeb graphs~\cite{de2016categorified} and Mapper~\cite{singh2007topological} for related work. From the section category~$\S_f$ we define the \emph{combinatorial Reeb space} by first applying the nerve followed by taking path components level-wise~$\pi_0 \nerve\S_f$. It is important to note that this construction is no topological space, but rather a simplicial set. To compare topological and combinatorial Reeb spaces we make use of the fact that topological spaces and simplicial sets carry the same homotopical information: we identify the homotopy type of a simplicial set~$S$ with that of its geometric realization~$|S|$. The combinatorial and topological Reeb spaces of~$f$ do not have the same homotopy type in general. But if we restrict ourselves to Reeb functions, then they do agree.
\begin{theorem}
\label{intro:combinatorialReebIsClassicalReeb}
For any Reeb function~$f\colon X\rightarrow \R$, the simplicial set~$\pi_0\nerve\S_f$ has the same homotopy type as the topological space~$\reeb_f$; there is a zigzag of weak homotopy equivalences between~$|\pi_0\nerve\S_f|$ and~$\reeb_f$.
\end{theorem}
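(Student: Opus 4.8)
The plan is to compare both spaces with the realization $|\nerve\S_f|$, which Theorem~\ref{intro:mainresult} already identifies with $X$. Levelwise $\pi_0$ is a functor from simplicial topological spaces to simplicial sets, and regarding a set as a discrete space gives a natural collapse map of simplicial spaces $\nerve\S_f \to \pi_0\nerve\S_f$; since the target is discrete, its realization as a simplicial space agrees with its realization as a simplicial set, so after geometric realization I obtain a natural map $|\nerve\S_f| \to |\pi_0\nerve\S_f|$. Combining this with a homotopy inverse of the equivalence of Theorem~\ref{intro:mainresult} yields a comparison $X \to |\pi_0\nerve\S_f|$; to keep everything honest, and to produce the asserted zigzag rather than a single map, I would realize this comparison at the level of simplicial spaces and replace any wrong-way equivalence by a mapping cylinder. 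The first task is then to show that this comparison is constant on the equivalence classes cut out by $\reeb_f$: if $x,y\in X=(\nerve\S_f)_0$ lie in the same path component of a level set $f^{-1}(c)$, a path between them inside the fiber is a path in the space of objects that becomes constant after applying $\pi_0$, so $x$ and $y$ acquire the same image; checking the analogous statement on chains of composable sections shows the comparison descends to a map $\phi\colon \reeb_f \to |\pi_0\nerve\S_f|$.

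It then remains to prove that $\phi$ is a weak homotopy equivalence, and this is where the Reeb hypotheses enter. I would stratify the base $\R$ by the locally finite set of critical values afforded by Definition~\ref{definition:reebFunction} and cover $\R$ by open intervals each meeting at most one critical value. Over an interval containing no critical value $f$ is locally a product, so both $\reeb_f$ and $|\pi_0\nerve\S_f|$ restrict to a disjoint union of arcs indexed by $\pi_0$ of the fiber and $\phi$ is visibly an equivalence there; over an interval containing a single critical value I would compute the local model of the section category supplied by the definition and check directly that $\phi$ is an equivalence on that piece. Pulling the cover of $\R$ back along $\reeb_f \to \R$ and along $|\pi_0\nerve\S_f| \to \R$ produces open covers interchanged by $\phi$, and a Mayer--Vietoris / homotopy-colimit (descent) argument then upgrades the local equivalences to a global one. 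The path-lifting property of the Reeb quotient $q\colon X\to\reeb_f$, again a consequence of the Reeb hypotheses, is what guarantees that these covers correspond and that the gluing is well behaved.

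The main obstacle is precisely this last equivalence, because geometric realization does not commute with levelwise $\pi_0$ for a general simplicial space: collapsing each space of $n$-chains to its components genuinely changes the homotopy type, as indeed it must, since $X$ and $\reeb_f$ differ. The whole content is therefore that, for a Reeb function, the homotopy collapsed by $\pi_0$ is exactly the \emph{vertical} homotopy living in the fibers of $f$, and nothing more. Making this precise requires the local product structure and the controlled local model at a critical value from Definition~\ref{definition:reebFunction}, together with enough point-set care---properness or Reedy cofibrancy of $\nerve\S_f$---so that $|\nerve\S_f|$ and $|\pi_0\nerve\S_f|$ are honest homotopy colimits over the chosen cover and the gluing lemma for weak equivalences applies. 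I expect the bulk of the work to be the verification of the local model at a critical value and the bookkeeping needed to make the two covers correspond under $\phi$.
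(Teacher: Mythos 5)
There is a genuine gap, and also a concrete error in the setup. First the error: your comparison map points the wrong way. The collapse $|\nerve\S_f|\rightarrow|\pi_0\nerve\S_f|$ does not factor through the equivalence $|\nerve\S_f|\rightarrow X$ of Theorem~\ref{intro:mainresult}, so there is no map $X\rightarrow|\pi_0\nerve\S_f|$ to descend: two sections $\sigma,\sigma'\colon[a,b]\rightarrow X$ crossing a saddle at the same point $p$ but diverging above it give distinct points $(\sigma,\bar t)$ and $(\sigma',\bar t)$ of $|\nerve\S_f|$ with the same image $p$ in $X$ (hence the same image in $\reeb_f$), yet with distinct images $([\sigma],\bar t)\neq([\sigma'],\bar t)$ in $|\pi_0\nerve\S_f|$ whenever $[\sigma]\neq[\sigma']$ in $\pi_0\S_f[a,b]$. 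The map that does exist goes the other way, $|\pi_0\nerve\S_f|\rightarrow\reeb_f$ (a path of sections evaluates, at each height, to a path in a fiber), so your zigzag has to be rebuilt around that map; this is repairable, but as written the descent step is incoherent.

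The substantive gap is the local statement at a critical value, which you defer to ``compute the local model of the section category supplied by the definition and check directly.'' Definition~\ref{definition:reebFunction} supplies no such model: it only imposes discreteness of critical values and strata-wise properness, and the cone-like structure of Definition~\ref{definition:stratifiedSpace} is pointwise in $X$, not a description of $\S_f$ near a critical level. Over an interval $N$ around one critical value $c$, the space $\reeb_f|_N$ deformation retracts to the discrete set $\pi_0 f^{-1}(c)$, so your local claim amounts to: each component of the combinatorial Reeb space over $N$ is weakly contractible. That is the entire theorem in the one-critical-value case, and it is exactly where the paper spends its effort --- showing that no spurious loops or higher cells arise when sections are composed and decomposed. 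The paper handles it by proving $\reeb_f$ is a $1$--dimensional CW complex carrying a piecewise linear Reeb function (Lemma~\ref{lemma:ReebhasReebData}), applying Theorem~\ref{thm:mainresult} to that function, and then showing the unit $\eta\colon\pi_0\nerve\S_f\rightarrow\nerve\tau_1\pi_0\nerve\S_f$ realizes to an equivalence via Brown's collapsing schemes and the unique-factorization-of-words Lemma~\ref{lemma:dipath}, finishing with an explicit isomorphism of categories $\tau_1\pi_0\nerve\S_f\cong\S_{\bar f}$ built from the flow-lines of Proposition~\ref{proposition:modifiedFlowLines}. Some such combinatorial control of words of sections (or an equivalent device) is unavoidable in your local step; without it the Mayer--Vietoris scaffolding glues equivalences you have not established. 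A secondary omission of the same kind: identifying $\bar f^{-1}(N)\subset|\pi_0\nerve\S_f|$ with the combinatorial Reeb space of $f|_{f^{-1}(N)}$ needs deformation arguments analogous to Lemmas~\ref{lemma:compactIntoCfA}--\ref{lemma:deformationRetract}, since cells decorated by intervals straddling $N$ meet $\bar f^{-1}(N)$ in partial simplices.
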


Topological Reeb spaces are not graphs in general (Example~\ref{example:ReebNotGraph}) and we might expect combinatorial Reeb spaces to have equally nasty homotopy types. But it turns out that combinatorial Reeb spaces are always weakly homotopic to graphs:
\begin{theorem}
\label{intro:combinatorialReebIsGraph}
The combinatorial Reeb space of any continuous function has the homotopy type of a~$1$--dimensional CW complex.
\end{theorem}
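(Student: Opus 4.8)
The plan is to analyse the simplicial set $T \coloneqq \pi_0\nerve\S_f$ directly, without passing through $\reeb_f$ (which for a non-Reeb $f$ need not even be a graph). First I would record the combinatorial shape of its simplices: an $n$--simplex is a path component of the space of composable chains $x_0\rightarrow\dots\rightarrow x_n$, each constituent morphism is a monotone section covering some interval, composition is concatenation in the $\R$--direction, and a simplex is degenerate precisely when one of its sections has collapsed to an identity. The goal, reformulated, is to show that each component of $|T|$ is aspherical with free fundamental group, equivalently to produce a weak equivalence from $|T|$ to a genuine graph. The organising principle is that, after applying $\pi_0$, the section spaces retain only set--level (``discrete'') information, all of it indexed \emph{linearly}, by monotone intervals in $\R$.

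Concretely, I would assemble the assignment $[a,b]\longmapsto \pi_0(\text{sections over }[a,b])$ into a covariant, set--valued cosheaf on the line, with corestriction maps induced by concatenation, and then prove that $T$ is weakly equivalent to the homotopy colimit of this cosheaf taken along $\R$. The decisive point is a Segal--type \emph{excision} statement: the set of components of the space of long $n$--chains over an interval $[a_0,a_n]$ should be recoverable from those of the one--step sections across a subdivision $a_0<a_1<\dots<a_n$, so that $T$ is built, up to weak equivalence, by gluing the discrete sets $\pi_0(\text{sections over short intervals})$ along the totally ordered real line. Granting this, one is left with a homotopy colimit of \emph{discrete} sets over a one--dimensional base: its cells are $0$--cells (components lying over a level) and $1$--cells (the interval--bridges that interpolate between nested components, and can only merge or split them), with nothing in dimension $\ge 2$. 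A space glued from such data is a graph, so $|T|$ is one--dimensional up to homotopy; this simultaneously renders the freeness of $\pi_1$ and the asphericity transparent.

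The main obstacle is that the theorem permits an \emph{arbitrary} continuous $f$, so none of the finiteness, tameness or constructibility available for Reeb functions (Definition~\ref{definition:reebFunction}) may be assumed: level sets and their component sets can be wild, monotone sections over a slab may fail to exist or to be unique, and $T$ carries non--degenerate simplices in every dimension (already for $X=[0,1]$, $f=\id$, where $T$ is nonetheless contractible). The real work is therefore to prove the excision/van~Kampen statement for $\pi_0$ of monotone sections at this level of generality, and to confirm that the resulting cosheaf homotopy colimit attaches no cell of dimension $\ge 2$ --- that is, that every transition between neighbouring levels is of ``merge or split'' (codimension--one) type even in the non--locally--finite case. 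I expect to treat this by a nested--interval and cofinality argument, approximating an arbitrary threshold by slabs and controlling the components in the limit; once discreteness of the fibres and the codimension--one nature of the transitions are in hand, one--dimensionality of the homotopy colimit is formal.
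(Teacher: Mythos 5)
Your proposal stalls at exactly the step you yourself flag as ``the real work'', and in the form you state it that step is false. The excision claim --- that the component set of the space of $n$--chains over $[a_0,a_n]$ is recoverable from the component sets of one--step sections across a subdivision --- amounts to asking $\pi_0$ to commute with the fiber products $\S_f[a_0,a_1]\times_{f^{-1}a_1}\cdots\times_{f^{-1}a_{n-1}}\S_f[a_{n-1},a_n]$ that define $\nerve\S_f$, and it does not: an $n$--simplex of $\pi_0\nerve\S_f$ records \emph{strict} composability of sections, whereas matching of component data only gives composability up to a path in a fiber. Concretely, take $X=\{(x,0):x\le b\}\cup(\{b\}\times[0,1])\cup\{(x,1):x\ge b\}$ with $f$ the first--coordinate projection: every section over $[a,b]$ ends at $(b,0)$ and every section over $[b,c]$ starts at $(b,1)$, so there are classes $[\sigma]\in\pi_0\S_f[a,b]$ and $[\rho]\in\pi_0\S_f[b,c]$ with $[t\sigma]=[s\rho]$ in $\pi_0 f^{-1}b$, yet $\S_f[a,b]\times_{f^{-1}b}\S_f[b,c]=\emptyset$ because no continuous section crosses $b$. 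Thus the Segal--type comparison map fails already on surjectivity (injectivity fails just as easily), and your glued object would not be weakly equivalent to $\pi_0\nerve\S_f$ by any naive comparison. The ``cosheaf'' is also not well defined as described: concatenation is a binary pairing over fibers, not a corestriction $\pi_0\S_f[a,b]\to\pi_0\S_f[a,c]$, and the only canonical single--interval maps (restrictions) go the other way and behave wildly under the nested--interval limits you invoke (compare the $x\sin(1/x)$ phenomenon of Example~\ref{example:BsfNotX}). For the same reason the ``merge or split only'' description of transitions is unjustified for arbitrary continuous $f$.

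The paper's proof is built around precisely this failure of excision. In place of it there is a word calculus: every morphism of the fundamental category $\tau_1\pi_0\nerve\S_f$ has a unique minimal--length presentation in edges (Lemma~\ref{lemma:dipath}, using the real--number decorations of sections), and the words of length $\ge 2$ admitting no single representing section are exactly the defect your excision would have to erase. Brown's collapsing schemes then show the unit $\pi_0\nerve\S_f\to\nerve\tau_1\pi_0\nerve\S_f$ realizes to a weak equivalence (Lemmas~\ref{lemma:abstract} and~\ref{lemma:addComposition}); Quillen's Theorem~A with explicit alternating homotopies shows that groupoidification does not change the homotopy type (Lemma~\ref{lemma:acyclic}); asphericity follows because classifying spaces of groupoids are aspherical, and freeness of $\pi_1$ follows from the irreducible normal forms of Lemma~\ref{lemma:path}. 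If you wish to salvage your plan, some form of the unique--factorization lemma is the minimal substitute for your excision statement --- at which point you will have reconstructed the paper's argument rather than found an alternative to it.
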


\textbf{Outline.} Section~\ref{sec:reebfunctions} is all about Reeb functions~$f\colon X\rightarrow \R$. To better illustrate the theory we first restrict ourselves to functions on~$\Crm^1$--manifolds in Section~\ref{subsec:reebC1} before handling more general stratified spaces in Section~\ref{section:stratifiedSpaces}. Results that do not hinge upon any simplicial structure are proven along the way. In Section~\ref{section:sectioncategory} we formally define the topological section category associated to a continuous function as well as the critical subcategory and other intermediate subcategories. Some simplicial background is then provided in Section~\ref{section:moreSimplicialStuff} before proving Theorem~\ref{thm:mainresult}, which implies Theorem~\ref{intro:mainresult}. The spectral sequence associated to section categories, as well as critical subcategories, is discussed in Section~\ref{section:spectralsequence}. General algebraic properties are deduced in Section~\ref{subsec:sectionsequence}, whereas Section~\ref{subsec:criticalsequence} is concerned with how to use the critical subcategory for numerical computations. In the remaining Section~\ref{section:reeb} we introduce combinatorial Reeb spaces. More background on simplicial sets is presented in Section~\ref{section:thmA} before proving Theorems~\ref{intro:combinatorialReebIsClassicalReeb} and~\ref{intro:combinatorialReebIsGraph} in Section~\ref{subsec:combistop} and Section~\ref{subsec:combisgraph}, respectively.

\textbf{Notation.} Categories of familiar objects are put inside parentheses, e.g.~$\Top$. The set of morphisms between objects~$x,y$ in a category is denoted~$\Map(x,y)$. In the case of topological spaces~$\map(X,Y)$ reads the topological space of continuous functions from~$X$ to~$Y$. The standard~$n$--simplex~$\Delta^n$ is modeled as the convex hull of the standard basis vectors in~$\R^{n+1}$. The~$1$--simplex will also be represented as the unit interval~$I$. We denote by~$[n]$ the category generated by the directed graph
\[
0\rightarrow 1 \rightarrow \dots \rightarrow n
\]
on~$n$ arrows. In particular,~$[0]$ is the trivial one object category and~$[1]$ is the category on two objects~$0$ and~$1$ connected by a non-trivial arrow~$0\rightarrow 1$.

\section{Reeb functions}
\label{sec:reebfunctions}

We shall clarify what it means for a function~$f\colon X \rightarrow \R$ to be a Reeb function. In this work, a stratified space is built out of~$\Crm^1$--manifolds, which will be covered more in depth later on. Hence we start out by restricting ourselves to the simplest spaces, namely the~$\Crm^1$--manifolds, in Section~\ref{subsec:reebC1}. Thereafter we move on to the more general stratified spaces in Section~\ref{section:stratifiedSpaces}. The final Proposition~\ref{proposition:modifiedFlowLines} is utilized many times throughout the paper.

\subsection{Reeb functions on~$\Crm^1$--manifolds}
\label{subsec:reebC1}

A continuous function~$f$ is said to be \emph{proper} if the preimage of compact is compact.

\begin{definition}
\label{definition:reebFunctionC1}
Let~$M$ be a~$\Crm^1$--manifold and~$f\colon M\rightarrow \R$ a~$\Crm^1$--function. Then~$f$ is a~\emph{Reeb function} if
\begin{enumerate}[i)]
\item the subspace of critical values of~$f$ is discrete inside~$\R$ and
\item the restriction of~$f$ to each component of~$M$ is proper.
\end{enumerate}
\end{definition}

Recall that a~$\Crm^1$--function~$f\colon M\rightarrow \R$ has a differential~$df\colon M\rightarrow \mathrm{T}^\ast M$ which is a section of the cotangent bundle;~1-form. Let us think of~$df$ in terms of its~\emph{gradient vector field}: Pick an inner product~$\langle -,- \rangle$ on~$\mathrm{T}M$, and characterize~$\mathrm{grad}(f)\colon M\rightarrow \mathrm{T}M$ by~$\langle \mathrm{grad} (f), \mathbf{v} \rangle = df(\mathbf{v})$ for all vector fields~$\mathbf{v}\colon M\rightarrow \mathrm{T}M$. The integral curves of a vector field~$\mathbf{v}\colon M\rightarrow \mathrm{T}M$ are the~$\Crm^1$--curves~$l\colon (\alpha, \omega)\rightarrow M$, allowing~$\pm \infty$, satisfying~$\frac{dl}{dt}=\mathbf{v}_{l(t)}$. A~\emph{local flow} on~$M$ is a map~$\Psi\colon U\rightarrow M$, defined on an open neighborhood~$U$ of~$ \{0\}\times M$ in~$\R\times M$, such that~$U\cap (\R\times \{p\})$ is an interval for which~$\Psi$ restricts to an integral curve. The maximal integral curves~$l_p$ of~$\mathbf{v}\colon M\rightarrow \mathrm{T}M$ form the maximal flow~$\Psi_{\mathbf{v}}(p,t)=l_p(t)$. It is maximal in the sense that there are no other local flows which contains the domain of~$\Psi_{\mathbf{v}}$. For this maximal flow, let us write~$(\alpha_p,\omega_p)=U\cap (\R\times\{p\})$, allowing~$\pm \infty$ as endpoints. Then~$l_p\colon (\alpha_p,\omega_p)\rightarrow M$ is the maximal integral curve subject to~$l_p(0)=p$. If an integral curve passes through a point~$q$ with~$\mathbf{v}(q)=0$ then~$l\colon \R\rightarrow M$,~$t\mapsto q$ is the obvious solution. This means, conversely, that all other integral curves are immersions. They do not have to be embeddings, in general. But it is the case whenever~$\mathbf{v}=\mathrm{grad}(f)$ for a function~$f$ as above:
\begin{align*}
\frac{d (f\circ l)}{dt}&= df_{l(t)}(\frac{dl}{dt})
=\langle \mathrm{grad}(f),\mathrm{grad}(f) \rangle_{l(t)}
\end{align*}
which is greater than zero so that~$f\circ l$ and hence~$l$ are both injective. The existence of integral curves follows by solving local differential equations. In fact, vector fields and maximal flows are in one-to-one correspondence~\cite[p.~82-83]{brocker1982introduction}. I will refer to the maximal integral curves of~$\mathrm{grad}(f)$ as the \emph{flow-lines} of~$f$.\\

\begin{definition}
Let~$f\colon X\rightarrow \R$ be a continuous function. A \emph{section} of~$f$ is a continuous function~$\sigma\colon [a,b]\rightarrow \R$ such that~$f\circ \sigma$ is the inclusion~$[a,b]\hookrightarrow \R$.
\end{definition}
The next assertion tells us how to continuously pick sections of Reeb functions, a property which will turn out to be extremely useful.

\parbox{\linewidth}{ 
\begin{proposition}
\label{proposition:modifiedFlowLinesManifold}
Let~$f\colon M\rightarrow \R$ be a Reeb function. For any pair~$c<d$ of successive critical values, there is a continuous function~$ g\colon  [c,d] \times f^{-1}(c,d)\rightarrow X$ such that for all~$x$ the curve~$g_x=g(-,x)$
\begin{enumerate}[i)]
\item is a section;~$f\circ g_x(t) = t$, and
\item pass through~$x$ at~$f(x)$;~$g_x(f(x))=x$.
\end{enumerate}
\end{proposition}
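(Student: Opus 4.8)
The plan is to reparameterize the flow-lines of $f$ by their $f$-value, turning them into sections, and to assemble these into a single continuous map. Since $c<d$ are successive critical values, the open slab $f^{-1}(c,d)$ contains no critical points, so $\mathrm{grad}(f)$ is nowhere zero there. First I would normalize the gradient, setting
\[
V = \frac{\mathrm{grad}(f)}{\langle \mathrm{grad}(f),\mathrm{grad}(f)\rangle}
\]
on $f^{-1}(c,d)$; along any integral curve $l$ of $V$ one computes $\frac{d(f\circ l)}{dt}=df(V)=\langle \mathrm{grad}(f),V\rangle=1$, so that $f$ increases at unit rate. Letting $\Phi$ denote the maximal flow of $V$, with the convention $\Phi(p,0)=p$, I would then define $g(s,x)=\Phi(x,\,s-f(x))$. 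By the unit-rate property $f\circ g_x(s)=f(x)+(s-f(x))=s$, so $g_x$ is a section, and $g_x(f(x))=\Phi(x,0)=x$, so it passes through $x$ at level $f(x)$. Joint continuity of $g$ in $(s,x)$ is inherited from continuous dependence of the flow on its initial conditions. This settles the two numbered properties over the open interval; the modification relative to the raw gradient flow-lines is precisely this reparameterization to unit $f$-speed.

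It remains to see that $g$ extends to the whole closed interval $[c,d]$, that is, that the flow-line through $x$ reaches every level in $[c,d]$. Here I would use properness, working in the component of $M$ containing $x$ (which I may do since properness is assumed per component): the preimage $f^{-1}[c,d]$ of the compact interval is then compact. Since $f$ strictly increases along $l$, the forward curve stays in $f^{-1}[f(x),d)$, a subset of this compact set, and so cannot escape to infinity; by maximality of $\Phi$ together with $f\circ l(t)=f(x)+t$, the forward escape time is exactly $d-f(x)$, and the curve must accumulate in the level set $f^{-1}(d)$. The symmetric argument run backwards gives accumulation in $f^{-1}(c)$.

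The main obstacle is to upgrade ``accumulates in $f^{-1}(d)$'' to genuine convergence to a single point, continuously in $x$, since only then can $g_x$ be extended across the critical levels $s=d$ and $s=c$. I expect this to be the delicate step: strict monotonicity of $f$ along the curve confines all accumulation points to the single level set $f^{-1}(d)$, and confinement to a compact set together with the absence of returns to any given level must be leveraged to force a well-defined limit $g_x(d):=\lim_{s\to d^-}g_x(s)$, and likewise $g_x(c)$ at the lower end. This is exactly where the $\Crm^1$ hypotheses and properness have to be combined — for a careless choice of inner product one must rule out the limit set being a nondegenerate continuum in $f^{-1}(d)$, and I would address this either by a finiteness-of-length estimate near the critical levels or by choosing the metric so that $V$ is suitably tame there. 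Once the pointwise limits exist, joint continuity of the extended $g$ on $[c,d]\times f^{-1}(c,d)$ would follow from the uniform control afforded by compactness of the closed slab, and every remaining verification reduces to standard properties of flows.
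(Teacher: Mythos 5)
Your first half is correct and coincides with the paper's: normalizing the gradient to $V=\mathrm{grad}(f)/\langle\mathrm{grad}(f),\mathrm{grad}(f)\rangle$ so that $f$ moves at unit speed along the flow is just a cleaner packaging of the paper's step of inverting the injective maps $f\circ l_x$ via the diffeomorphism $h$, and your use of maximality together with the absence of critical points to see that each flow-line sweeps out all of $(c,d)$ is exactly the paper's argument. The genuine gap is in the second half, and you have located it yourself but not closed it: you never prove that $g_x(s)$ converges to a single point as $s\to d^-$ (and $s\to c^+$); compactness and strict monotonicity of $f$ only confine the accumulation set to a compact connected subset of $f^{-1}(d)$, and the two remedies you gesture at (a finiteness-of-length estimate near the critical level, or a tame choice of metric) are left entirely undeveloped. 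Neither is routine: gradient flow-lines can in general spiral onto a whole continuum of critical points inside a critical level, so ruling this out requires an actual argument, and your proposal ends with the conditional ``once the pointwise limits exist'' rather than a proof. (For comparison, the paper dispatches this pointwise extension in one sentence from compactness of~$f^{-1}[c,d]$ per component, and invests its effort elsewhere.)

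The second shortfall is joint continuity of the extended~$g$ at the boundary levels, which you dismiss with ``uniform control afforded by compactness of the closed slab.'' This is precisely where the bulk of the paper's proof lives: it packages the extension as an operator $e\colon \mathrm{Flow}_f(c,d)\rightarrow \S_f(c,d)$ between mapping spaces, verifies continuity on a subbasis of compact-open sets $\Crm([a,b],V)$, and handles the delicate case $a=c$ by invoking Ehresmann's fibration theorem to trivialize $f$ over the slab, building shrinking cylinder neighborhoods $C_n=E_b^{-1}(B_n\times[a_n,b])$ around the limiting section and deriving a contradiction from any sequence of nearby flow-lines $\rho_n$ with $\rho_n(a_n')$ escaping $V$ while the construction forces $\rho_n(a_n')\to g(c)$. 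Nothing of this sort follows formally from compactness: the needed control must be uniform across distinct flow-lines approaching the critical fiber, and different flow-lines may behave very differently near $f^{-1}(c)$. Until you supply both the pointwise limit and this equicontinuity-type argument, your proposal is a correct strategy with the same skeleton as the paper's proof, but it is not a complete proof.
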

}
\begin{proof}
The idea is simple: We would like to reparametrize the flow-lines of~$f$. Let~$\Psi\colon U \rightarrow f^{-1}(c,d)$ be the maximal flow of~$f$ restricted to~$f^{-1}(c,d)$. For every flow-line~$l_x\colon (\alpha_x,\omega_x)\rightarrow f^{-1}(c,d)$, the preceding discussion implies that the composition~$f \circ l_x\colon (\alpha_x,\omega_x) \rightarrow \R$ is injective. And so it defines a~$\Crm^1$--isomorphism~$h_x\colon (\alpha_x,\omega_x) \rightarrow (f\circ l_x (\alpha_x), f\circ l_x (\omega_x))$. The target must necessarily equal~$(c,d)$, independently of~$x$: there are no critical points in~$f^{-1}(c,d)$ and so an integral curve must meet every fiber. If not, one could have extended it by solving a local differential equation, contradicting the maximality of~$\Psi$. The map~$h\colon U\rightarrow (c,d)\times f^{-1}(c,d)$,~$(a,x)\mapsto (h_x (a),x)$ is a~$\Crm^1$--diffeomorphism. Its inverse is explicitly given by~$(a,x)\mapsto (h_x^{-1}(a),x)$. Define 
\[
\tilde{g}\colon (c,d)\times f^{-1}(c,d) \xrightarrow{h^{-1}} U\xrightarrow{\Psi} f^{-1}(c,d),
\]
then~$l_x=\Psi(-,x)$ implies that the restriction~$\tilde{g}_x=\tilde{g}(-,x)$ is equal to~$\tilde{g}_x(t)=l_x(h_x^{-1}t)$ and thus
\[
f\circ \tilde{g}_x(t)=(f\circ l_x) (h_x^{-1}(t))=t.
\]
Also, the equation~$x=l_x(0)$ implies
\[
\tilde{g}(f(x),x)=l_x(h_x^{-1}\circ f\circ l_x(0))=l_x(0)=x.
\]
Hence the map~$\tilde{g}$ satisfies the asserted properties i) and ii).

The proof will be complete once we have extended the map~$\tilde{g}$ to~$[c,d]\times f^{-1}(c,d)$. One can alternatively view~$\tilde{g}$ as a map~$f^{-1}(c,d)\rightarrow \map ((c,d),f^{-1}[c,d])$, utilizing the right adjoint. In fact, the two properties of~$\tilde{g}$ above tells us that its adjoint factorizes through the subspace~$\mathrm{Flow}_f(c,d)$, of~$\map ((c,d),f^{-1}[c,d])$,  consisting of flow-lines reparametrized as sections~$(c,d)\rightarrow f^{-1}[c,d]$. So the map~$\tilde{g}$ might as well be interpreted as a map~$f^{-1}(c,d)\rightarrow \mathrm{Flow}_f(c,d)$. Since~$f$ is Reeb, hence proper on connected components, the preimage~$f^{-1}[c,d]$ is a disjoint union of compact topological spaces. Consequently any flow-line of the form~$\tilde{g}_x(c,d)\rightarrow f^{-1}[c,d]$ can be extended uniquely to a section~$g_x\colon [c,d]\rightarrow f^{-1}[c,d]$. In other words, there is a function~$e\colon\mathrm{Flow}_f(c,d)\rightarrow \S_f(c,d)$ that extends reparametrized low-lines on~$(c,d)$ to sections on~$[c,d]$. The rather tedious task of demonstrating the continuity of~$e$ is all that remains. For then the composition
\[
f^{-1}(c,d)\xrightarrow{\tilde{g}} \mathrm{Flow}_f(c,d)\xrightarrow{e} \S_f(c,d)
\]
admits an adjoint~$g\colon [c,d]\times f^{-1}(c,d)\rightarrow X$ satisfying the asserted properties.

For every~$a\leq b$ in~$[c,d]$ and~$V$ open in~$M$, denote by~$\Crm([a,b],V)$ the subbasis element whose points are the maps~$\rho\colon[a,b]\rightarrow M$ for which~$\rho([a,b])\subset V$. Then the collection of all~$\Crm([a,b],V)\cap \S_f(c,d)$ is a subbasis for~$\S_f(c,d)$. Similarly, the collection of all~$\Crm([a,b],V)\cap \mathrm{Flow}_f(c,d)$, with~$c<a\leq b<d$, is a subbasis for~$\mathrm{Flow}_f(c,d)$. We need only verify that every preimage of the form~$e^{-1}(\Crm([a,b],V)\cap \S_f(c,d))$ is open. This is trivial whenever~$c<a$ and~$b<d$, for then the preimage~$e^{-1}(\Crm([a,b],V)\cap \S_f(c,d))$ is the set~$\Crm([a,b],V)\cap \mathrm{Flow}_f(c,d)$ which is open. To complete the proof, we will assume~$a=c$ and~$b<d$ henceforth: The case~$a>c$ and~$b=d$ is similar, whereas~$a=c$ and~$b=d$ is a special case of the former.

Take an arbitrary flow-line~$\tilde{g}$ in~$e^{-1}(\Crm([c,b],V)\cap \S_f(c,d))$. Let~$g=e\tilde{g}$ be the extension to~$[c,d]$ so that~$g(c)$ is the limit point of~$\tilde{g}$ in~$f^{-1}(c)$. We need only prove that there is an open neighborhood~$N$, of~$\tilde{g}$, inside~$e^{-1}(\Crm([c,b],V)\cap \S_f(c,d))$. To construct such a neighborhood we first pick a monotone sequence~$(a_n)$ in~$(c,b]$ converging to~$c$. Ehresmann's fibration theorem~\cite{ehresmann1950connexions} provides a~$\Crm^1$--diffeomorphism~$E_{a'}$ over~$\R$:
\begin{center}
		\begin{tikzcd}
			f^{-1}(c,d)  \arrow[r, "E_{a'}"] \arrow[dr, "f"] & f^{-1}(a')\times (c,d) \arrow[d, "\mathrm{pr}_2"] \\
			
			                                            & \R                      
		\end{tikzcd}
\end{center}
for every real number~$b<a'<c$. The elementary opens in~$ (f^{-1}(a')\cap V)\times (c,d)$ are all of the form~$B\times (c',d')$ where $B$ is an open ball. Since every restriction~$g|_{[a_n,b]}$ has compact image and~$g$ maps into~$V$, there are cylinders~$C_n=E_{b}^{-1}(B_n\times [a_n,b])$ contained in~$V$ with the property that~$N_n=\Crm([a_n,b],C_n)$ is a neighborhood of~$\tilde{g}$. Moreover, it is safe to assume that the radius of~$B_n$ tends to zero as~$n$ goes to infinity: If $B'$ is a ball contained inside~$B$, and~$B\times (c',d')$ maps into~$V$ under Ehresmann's~$\Crm^1$--diffeomorphism, then surely so does~$B'\times (c',d')$. I claim that we can choose~$N=N_{n_0}$ for some~$n_0$. Assume conversely that this is not the case. Then no~$N_n$ is contained in~$e^{-1}(\Crm([c,a'],V)\cap \S_f(c,d))$. So for every~$n$ there is a flow-line~$\rho_n$ and a real number~$a_n'$ in~$[c,a_n]$ such that~$\rho_n(a_n')$ is in the complement of~$V$. But the sequence~$(\rho_n(a_n'))$ converges to the point~$g(c)$--inside~$V$--by construction, a contradiction.
\end{proof}

\subsection{Extension to stratified spaces}
\label{section:stratifiedSpaces}

There are several notions of `stratified spaces' around. One of which is the locally cone-like spaces dating back to R. Thom's work in the late 60s~\cite{thom1969ensembles}. A more recent reference is~\cite{goresky1983intersection}. For any topological space~$Z$, we will denote the open cone~$Z\times [0,1)/ Z\times 0$ by~$\Crm(Z)$. As an example the open cone on the~$(n-1)$--sphere is the open~$n$--disk. A \emph{filtration-preserving map} between two filtrations~$X_0\subset X_1\subset\cdots$ and~$Y_0\subset Y_1\subset\cdots$ of topological spaces, consists of continuous functions~$g_n\colon X_i\rightarrow Y_i$ which commutes with the inclusions:~$g_{n+1}\circ (X_n\subset X_{n+1})=(Y_n\subset Y_{n+1})\circ g_n $. 
\begin{definition}
\label{definition:stratifiedSpace}
An~$n$--dimensional \emph{stratification} on a topological space~$X$ is a filtration
\[
\emptyset =X_{-1}\subset X_0\subset X_1\subset\dots \subset X_n=X
\]
satisfying: i) every~$i$th \emph{stratum}~$S_i=X_i\setminus X_{i-1}$ is an~$i$--dimensional~$\Crm^1$--manifold and ii) for every point~$x$ in~$S_i$ there exists an open neighborhood~$U$ about~$x$, an~$(n-i-1)$--dimensional stratified space~$Z$ and a filtration-preserving homeomorphism~$h:U\simeq\R^i\times \Crm(Z)$. The restriction which takes~$U\cap S_{i+j+1}$ to~$\R^i\times \Crm (Z_j-Z_{j-1})$, and its inverse, are both required to be~$\Crm^1$. We say that a topological space together with an~$n$--dimensional stratification is a \emph{stratified space} of dimension~$n$.
\end{definition}

Finite-dimensional stratified spaces form a category by only considering filtration-preserving maps. Include filtered colimits to get a more general notion of~\emph{stratified spaces}, allowing infinite filtrations. Every~ CW complex~$X$ fits into this larger category: The~$i$th stratum of~$X$ is the disjoint union of its (open)~$i$--cells. In particular, every weak homotopy type can be represented by such a space. \\

A continuous function~$f:X\rightarrow \R$, from a stratified space~$X$, is~\emph{strata-wise}~$\Crm^1$ if it is~$\Crm^1$ when restricted to each stratum. A point~$x$ in the~$i$th stratum of~$X$ is \emph{critical} if it is a critical point of the~$\Crm^1$--map~$f|_{S^ i}$.\\

\begin{example}
\label{example:existenceCritical}
For a given stratifiable space~$X$, the definition of a strata-wise~$\Crm^1$--function depends on the choice of stratification. Because of this we can always assume a Reeb function to have critical values. Indeed, let~$f\colon X\rightarrow \R$ be a Reeb function for which there are no critical values. Then we slightly modify the stratified structure on~$X$: refine the already existing structure by dividing every stratum~$S$ into the three strata~$f|_S^{-1}(-\infty,0)$,~$f|_S^{-1}(0)$ and~$f|_S^{-1}(0,\infty)$. Then~$f$ is still a Reeb function on~$X$ with this choice of stratification. Moreover, we now have a critical value~$0$.
\end{example}

We extend Definition~\ref{definition:reebFunctionC1} from differentiable manifolds to stratified spaces in the following way:

\begin{definition}
\label{definition:reebFunction}
Let~$X$ be a stratified space and~$f\colon X\rightarrow \R$ a strata-wise~$\Crm^1$--function. We say that~$f$ is a \emph{Reeb function} if
\begin{enumerate}[i)]
\item the subspace of critical values of~$f$ is discrete inside~$\R$ and
\item for any connected component~$C$ of some stratum, the restriction of~$f$ to the closure of~$C$, in~$X$, is proper.
\end{enumerate}
\end{definition}

For the purpose of proving Thoerem~\ref{intro:mainresult}, this will turn out to be a satisfactory extension. In particular, there is the stratified version of Proposition~\ref{proposition:modifiedFlowLinesManifold}.

\begin{proposition}
\label{proposition:modifiedFlowLines}
Let~$f\colon X\rightarrow \R$ be a Reeb function. For any pair~$c<d$ of successive critical values, there is a continuous function~$ g\colon  [c,d] \times f^{-1}(c,d)\rightarrow X$ which satisfies
\begin{enumerate}[i)]
 \item every~$g_x\colon [c,d]\rightarrow X$,~$g_x(t)=g(t,x)$ is a section and \item ~$g(f(x),x)=x$.
\end{enumerate}
\end{proposition}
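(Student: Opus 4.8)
The plan is to reduce to the manifold case of Proposition~\ref{proposition:modifiedFlowLinesManifold} by replacing the gradient flow with a \emph{stratified flow}: a continuous flow $\Psi$ on $f^{-1}(c,d)$ whose restriction to each stratum is $\Crm^1$ and tangent to that stratum, and along which $f$ increases at unit speed, i.e.\ $f(\Psi(x,s))=f(x)+s$. Granting such a $\Psi$, the construction of $g$ is immediate and even cleaner than in the manifold case, because the reparametrization is trivial: one sets $\tilde g_x(t)=\Psi(x,t-f(x))$ for $t\in(c,d)$. Properties i) and ii) on the open interval are then formal, since $f\circ\tilde g_x(t)=f(x)+(t-f(x))=t$ and $\tilde g_x(f(x))=\Psi(x,0)=x$. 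As the flow is tangent to strata, each $\tilde g_x$ stays in the stratum of $x$ for $t\in(c,d)$, where $f$ restricts to a $\Crm^1$--submersion with no critical values; this is exactly the situation treated inside Proposition~\ref{proposition:modifiedFlowLinesManifold}, so the within-stratum facts (in particular injectivity of $f$ along $\tilde g_x$, and that the flow-line meets every fiber $f^{-1}(s)$ with $s\in(c,d)$) may be imported from there.

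The core step is the construction of $\Psi$. I would build it from local pieces: around a point of a stratum $S_i$ the stratifying chart $h\colon U\simeq\R^i\times\Crm(Z)$ of Definition~\ref{definition:stratifiedSpace} is $\Crm^1$ on each stratum and filtration--preserving, and $f|_{S_i}$ is a submersion near the point (no critical values in $(c,d)$). This lets me choose, in each chart $U_\alpha$, a vector field $v_\alpha$ that is tangent to every stratum meeting $U_\alpha$, is $\Crm^1$ on each such stratum, and satisfies $df(v_\alpha)=1$, the cone product structure being used to make these choices mutually compatible across the strata of $U_\alpha$. A partition of unity $\{\phi_\alpha\}$ subordinate to $\{U_\alpha\}$ then glues them to $v=\sum_\alpha\phi_\alpha v_\alpha$; convexity preserves both tangency to a given stratum and the normalization $df(v)=\sum_\alpha\phi_\alpha=1$. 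Integrating $v$ yields $\Psi$, whose flow-lines cannot terminate inside $f^{-1}(c,d)$, since there $f$ strictly increases and $(c,d)$ contains no critical values, so $\Psi(x,\cdot)$ is defined for all $s$ with $f(x)+s\in(c,d)$.

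It remains to extend $\tilde g$ from $(c,d)$ to $[c,d]$ and to verify continuity, following the template of the manifold proof. As there, I reinterpret $\tilde g$ as a map $f^{-1}(c,d)\rightarrow\mathrm{Flow}_f(c,d)\subset\map((c,d),f^{-1}[c,d])$. Condition ii) of Definition~\ref{definition:reebFunction} makes $f$ proper on the closure of each connected component of each stratum, so $f^{-1}[c,d]$ meets that closure in a compact set; each reparametrized flow-line therefore has limit points as $t\rightarrow c^{+}$ and $t\rightarrow d^{-}$, and injectivity of $f$ along it renders these limits unique, extending it to a section $g_x\colon[c,d]\rightarrow X$. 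This defines $e\colon\mathrm{Flow}_f(c,d)\rightarrow\S_f(c,d)$, and $g$ is the adjoint of $e\circ\tilde g$. The continuity of $e$ is then proved exactly as in Proposition~\ref{proposition:modifiedFlowLinesManifold}, via the subbasis elements $\Crm([a,b],V)$, with the local trivializations of $f$ near the critical fibers now supplied by the stratified flow $\Psi$ itself in place of Ehresmann's theorem.

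The main obstacle is the construction and regularity of $\Psi$ in the second paragraph: a vector field that is merely continuous across strata need not integrate to a well-defined continuous flow, so the local $v_\alpha$ must be chosen compatibly with the cone/tube projections (the Thom--Mather control conditions) to guarantee that $v$ integrates to a continuous, stratum-preserving flow. This compatibility is precisely what forces the use of the $\Crm^1$--conditions on the restricted charts in Definition~\ref{definition:stratifiedSpace}, and it is the same phenomenon that underlies the continuity of the extension $e$ near the critical fibers $f^{-1}(c)$ and $f^{-1}(d)$, where flow-lines from higher strata accumulate onto lower ones.
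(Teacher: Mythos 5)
There is a genuine gap, and it sits exactly where you flagged it: the construction of the stratified flow $\Psi$. Your plan needs a single vector field $v$ on $f^{-1}(c,d)$ that is tangent to each stratum, continuous across strata, and has unique integral curves assembling into a continuous flow. None of these steps is available from Definition~\ref{definition:stratifiedSpace} as you use them. First, the partition-of-unity gluing $v=\sum_\alpha\phi_\alpha v_\alpha$ is only meaningful stratum by stratum: at a point of a lower stratum $S_i$, the tangent space of $S_i$ and the tangent spaces of nearby higher strata are not subspaces of any common ambient bundle (the paper's stratified spaces carry no ambient smooth structure), so ``continuity of $v$ across strata'' is not even a well-posed condition, let alone one preserved by convex combinations. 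Second, even granting a sensible notion of a continuous stratified field, continuity does not give uniqueness of integral curves, so ``integrating $v$ yields $\Psi$'' does not produce a well-defined flow; the classical repair is Thom--Mather control data (tube systems and controlled vector fields), but Definition~\ref{definition:stratifiedSpace} does not include control data and you do not construct it --- your closing paragraph names this obstacle and then asserts it away, which amounts to restating the claim to be proven. Relatedly, your argument that flow-lines ``cannot terminate inside $f^{-1}(c,d)$'' because $f$ increases at unit speed does not rule out the actual failure mode: a curve in an open stratum $S_n$ reaching a point of $X_{n-1}$ in finite time, where the stratum-wise flow simply ends.

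The paper's proof is designed precisely to avoid any cross-strata flow. It inducts on the strata: assuming the family $g_{n-1}$ of sections on $X_{n-1}$ already constructed, it builds a vector field only on the open stratum $S_n$ (an honest $\Crm^1$ manifold, so integration is unproblematic), choosing it near $X_{n-1}$ by lifting the field of $g_{n-1}$ through the cone-chart projection $\R^i\times\Crm(Z)\rightarrow\R^i$ and blending with the gradient field elsewhere via a partition of unity \emph{within} $S_n$. The lifted field makes integral curves in $S_n$ shadow the lower flow, which is exactly what prevents them from acquiring limit points in $X_{n-1}\cap f^{-1}(c,d)$; the global $g$ is then assembled piecewise from $g_{n-1}$ on $X_{n-1}$ and $g_n$ on $S_n$, with no single flow on $X$ ever being integrated. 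Your third paragraph (extension from $(c,d)$ to $[c,d]$ via properness, the map $e$, and the subbasis continuity argument) correctly mirrors the template of Proposition~\ref{proposition:modifiedFlowLinesManifold}, but it rests on the unconstructed $\Psi$, so as written the proposal does not close.
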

\begin{proof}
For a general stratified space~$X$, and Reeb function~$f\colon X\rightarrow \R$, let~$i_1,i_2,\dots$ denote the indices of the non-empty strata. The proof is by induction on~$i_n$. To ease notation I will simply reindex~$i_n\mapsto n$. Define~$f_n$ to be the restriction of~$f$ to~$X_n$. For~$n=0$ there is nothing to prove if~$X_0$ is~$0$--dimensional, otherwise the base case follows by Proposition~\ref{proposition:modifiedFlowLinesManifold}. Assume that a function~$g_{n-1}\colon f_{n-1}^{-1}(c,d)\times [c,d]\rightarrow X_{n-1}$ is constructed to satisfy the assertion. We shall modify the gradient vector field on the~$n$th stratum to take into account the flow on lower dimensional strata. Definition~\ref{definition:stratifiedSpace} tells us that a point~$x$ in~$S_i \cap X_n \cap f^{-1}(c,d)$ admits a neighborhood~$N_x$, contained in~$f^{-1}(c,d)$, of the form~$\R^i\times \Crm (Z)$ with~$Z$ an~$(n-i-1)$--dimensional stratified space. We shall define a vector field on each~$N_x\cap S_n$ to obtain a new vector field on all of~$S_n$ via a partition of unity.

If~$i<n$, then the~$(n-i-1)$st stratum of~$Z$, which is locally~$\Crm^1$--diffeomorphic to~$\R^{n-i-1}$, indicates the intersection between~$\R^i\times \Crm (Z)$ and~$S_n$. So the intersection of~$N_x$ and the~$n$th stratum~$S_n$ may be covered by opens~$N_{x,j_x}\simeq \R^i\times \Crm(\R^{n-i-1})$. Let us construct a vector field on one such neighborhood~$N$ which meets~$S_n$ in $U\simeq \R^i \times \R \times \R^{n-i-1}$ and~$S^i$ in~$V=N\cap S_i\simeq \R^i$. There is a~$\Crm^1$--map~$U\rightarrow V$ which is given by the projection~$\pr_1\colon \R^i\times \R\times \R^{n-i-1}\rightarrow \R^i$ in coordinates. The induced map~$\mathrm{T}\pr_1$ on tangent spaces admits a right inverse~$v\mapsto (v,0,0)$. Hence a vector field on~$V$ defines a vector field on~$U$. In particular, the vector field corresponding to an appropriate restriction of~$g_{n-1}$ defines a vector field~$\mathbf{u}\colon U\rightarrow \mathrm{T}U$. Notice that an integral curve~$l$ of~$\mathbf{u}$ cannot have a limit point in~$X_{n-1}\cap f^{-1}(c,d)$ since~$g_{n-1}$ is a family of~$\Crm^1$ sections. For every~$x$ in~$X_{n-1}$, also contained in the closure of~$S_n$, we associate such a vector field~$\mathbf{u}_x\colon U_x\rightarrow \mathrm{T}U_x$. Otherwise, if~$i=n$ and~$x$ is not contained in any such~$U_x$, then~$N_x\simeq \mathbb{R}^n$ and we simply restrict the gradient vector field on~$S_n$ to~$N_x$. 

To define a vector field on all of~$S_n$, we cover~$S^n$ with a family of opens~$(U_\alpha)$ as described above and pick a partition of unity~$(\rho_{U_\alpha})$. The formula~$\mathbf{v}=\sum_{\alpha} \rho_{U_{\alpha}} \mathbf{u}_{\alpha}$ defines a vector field~$S_n\rightarrow \mathrm{T}S_n$. Notice how~$df(\mathbf{v})$ is non-zero everywhere precisely because each~$df(\mathbf{u}_\alpha)$ is non-zero everywhere. The corresponding maximal local flow thus results in a map~$g_n\colon [c,d]\times f|_ {S_n}^{-1}(c,d)\rightarrow X$. Combine~$g_{n-1}$ and~$g_{n}$ to define the parametrized family~$g\colon [c,d]\times f_n^{-1}(c,d)\rightarrow X_n$
\[
g(t,x)=\twopartdef{g_{n-1}(t,x)}{x\in X_{n-1}}{g_n(t,x)}{x\in S_n}
\]
of sections.
\end{proof}

We end this entire section by proving a lemma. The result is analogous to two basic Morse lemmas that utilizes flow-lines to produce deformation retracts.

\begin{lemma}
\label{lemma:oneCriticalStratified}
Let~$f\colon X\rightarrow \R$ be a Reeb function with at most one critical value. Then the inclusion~$f^{-1}a\hookrightarrow X$ is a homotopy equivalence for all~$a$ if there is no critical value, otherwise it is a homotopy equivalence for~$a$ equal to the critical value.
\end{lemma}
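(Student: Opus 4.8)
The plan is to exhibit the relevant level set as a strong deformation retract of $X$, the retracting homotopy being built from the reparametrized flow of Proposition~\ref{proposition:modifiedFlowLines}. There are two cases, matching the two clauses of the statement; they use the same flow construction, the only difference being that in the second case there is a critical level across which the homotopy must be glued. The whole difficulty is concentrated in that gluing.

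First suppose $f$ has no critical value. Then $f^{-1}(\R)=X$ is a single region of regular values, and the construction of Proposition~\ref{proposition:modifiedFlowLines}, applied to this bi-infinite regular interval (where no endpoint extension is required), produces a continuous $g\colon \R\times X\to X$ with each $g_x$ a section and $g_x(f(x))=x$; every flow-line meets every fiber precisely because there are no critical points, so $g_x$ is defined for all real arguments. Fixing $a$, set
\[
H\colon X\times[0,1]\to X,\qquad H(x,t)=g_x\bigl((1-t)f(x)+t a\bigr).
\]
Then $H(-,0)=\id_X$, the image of $H(-,1)$ lies in $f^{-1}(a)$, and $H$ is stationary on $f^{-1}(a)$; since $g$ is continuous on all of $X$ and there is no critical level, $H$ is continuous. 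Hence $f^{-1}(a)\hookrightarrow X$ is a homotopy equivalence for every $a$.

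Now suppose $f$ has a single critical value $c$. The regular values split into the two regions $f^{-1}(c,\infty)$ and $f^{-1}(-\infty,c)$, on each of which the construction of Proposition~\ref{proposition:modifiedFlowLines} applies, extended at the single finite critical endpoint $c$ by properness exactly as in the proof of Proposition~\ref{proposition:modifiedFlowLinesManifold}. This gives sections flowing toward $c$: maps $g^{+}$ on $[c,\infty)\times f^{-1}(c,\infty)$ and $g^{-}$ on $(-\infty,c]\times f^{-1}(-\infty,c)$ with $g^{\pm}_x(c)\in f^{-1}(c)$. Define
\[
H(x,t)=\begin{cases} g^{+}_x\bigl((1-t)f(x)+t c\bigr) & f(x)>c,\\ x & f(x)=c,\\ g^{-}_x\bigl((1-t)f(x)+t c\bigr) & f(x)<c.\end{cases}
\]
As before $H(-,0)=\id_X$, the image of $H(-,1)$ lies in $f^{-1}(c)$, and $H$ is stationary on $f^{-1}(c)$, so a deformation retraction of $X$ onto $f^{-1}(c)$ results provided $H$ is continuous.

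The one genuine obstacle is the continuity of $H$ along the critical level. Away from $f^{-1}(c)$ continuity is immediate from Proposition~\ref{proposition:modifiedFlowLines}, since there $H$ is a composite of $g^{\pm}$ with continuous maps on the open regions; the delicate point is that $g^{\pm}$ is only controlled on those open regions, whereas the base points of $H$ range over all of $X$, including $f^{-1}(c)$ itself. Concretely, for $x\in f^{-1}(c)$ one must show that as $y\to x$ the sections $g^{\pm}_y$, restricted to the shrinking parameter interval between $f(y)$ and $c$, converge uniformly to the constant path at $x$. I would argue this by the same compactness scheme used for the continuity of the extension map $e$ in the proof of Proposition~\ref{proposition:modifiedFlowLinesManifold}: the Reeb hypothesis makes $f$ proper on the closures of the strata components, so all the relevant arcs lie in a fixed compact set, and any subsequential limit of $g^{\pm}_{y}(r)$, with the height $r$ tending to $c$, is a point of $f^{-1}(c)$; that this limit is forced to equal $x$ uses the local cone structure of Definition~\ref{definition:stratifiedSpace} around the critical point $x$ together with the absence of other critical values. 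This is precisely where properness cannot be dropped: for a non-proper function such as $x^2-y^2$ on $\R^2$ the downward flow lines escape to infinity and the construction breaks down. Granting this continuity, $f^{-1}(c)$ is a strong deformation retract of $X$ and the inclusion is a homotopy equivalence.
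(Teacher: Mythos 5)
Your construction is correct and runs on the same engine as the paper's proof: the reparametrized flow of Proposition~\ref{proposition:modifiedFlowLines} fed into the straight-line-in-height homotopy $H(x,t)=g_x\bigl((1-t)f(x)+ta\bigr)$, stationary on the target fiber. The packaging differs in two ways. First, the paper never works on unbounded regions: it filters $X$ by the slabs $X_n=f^{-1}[a-n,a+n]$, uses that $X$ is the homotopy colimit of the $X_n$, and factors each inclusion as $f^{-1}a\hookrightarrow f^{-1}[a-n,a]\hookrightarrow X_n$, retracting the two halves in separate stages; Proposition~\ref{proposition:modifiedFlowLines} is then only invoked over finite parameter intervals, where properness yields the compact slabs needed for the endpoint extension. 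Your bi- and semi-infinite families $g$ on $\R\times X$ and $[c,\infty)\times f^{-1}(c,\infty)$ are not literally covered by the statement of that proposition (which concerns a pair of successive critical values), so you are tacitly re-running its proof on unbounded intervals---harmless, since the arguments only need properness over compact subintervals, but it is exactly what the paper's exhaustion avoids, at the cost of the homotopy-colimit step you skip. Second, you glue the two half-space retractions across $f^{-1}(c)$ into one homotopy where the paper performs them as two deformation retracts; this is an equivalent reorganization, and the continuity question along the critical fiber that you isolate is equally present in the paper's version, whose homotopy is likewise defined by flow off the critical fiber and declared trivial on it. The paper's write-up simply asserts the homotopy there, whereas you flag the issue and sketch a properness argument; your sketch still leaves the key step (that the subsequential limit in $f^{-1}(c)$ must equal $x$, nontrivial since flow arcs of small height extent need not be short near degenerate critical points) to a one-line appeal to local structure, but on this delicate point you are more explicit than the paper, not less. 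Finally, for the no-critical-value clause the paper could also invoke Example~\ref{example:existenceCritical} to manufacture a critical value at any prescribed level and reduce to the other clause; your direct treatment works just as well.
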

\begin{proof}
Define a filtration~$X_n=f^{-1}[a-n,a+n]$,~$n\geq 0$, on~$X$. Given that~$X$ is the homotopy colimit over~$X_n$, it suffices to prove that the inclusion~$i_n \colon f^{-1}a\hookrightarrow f^{-1}[a-n,a+n]$ is a weak homotopy equivalence. The inclusion certainly factorizes
\[
f^{-1}a \xhookrightarrow{j_n} f^{-1}[a-n,a]\xhookrightarrow{k_n} f^{-1}[a-n,a+n]
\]
and we will only argue that~$j_n$ is a weak homotopy equivalence. For the case of~$k_n$ is similar.

To every point~$x$, in~$f^{-1}[a-n,a]$, we associate the reparametrized flow-line~$g_x\colon [a-n,a]\rightarrow X$ through~$x$ provided by Proposition~\ref{proposition:modifiedFlowLines}. If~$x$ is in~$f^{-1}a$, then~$g_x\colon \{a \}\rightarrow X$ is the trivial section at~$x$. Define a retract~$r_n$ of~$j_n$ by the formula~$r_n(x)=g_x(a)$. This defines a homotopy equivalence due to the homotopy
\[
H(x,t)= g_x(ta+(1-t)f(x))
\]
from~$H(x,0)=x$ to~$H(x,1)=j_n\circ r_n(x)$.
\end{proof}

\section{The section category and its classifying space}
\label{section:sectioncategory}

In Section~$\ref{subsec:sf}$ we define the section category~$\S_f$ of a continuous function~$f$. Also, if~$f \colon X\rightarrow \R$ is a Reeb function, then every subset~$A$ of~$\R$ which contains the critical values of~$f$ defines a subcategory~$\C_f^A$ of~$\S_f$. Section~$\ref{section:moreSimplicialStuff}$ is included for the reader that would like some background on simplicial sets. Thereafter Theorem~\ref{intro:mainresult} is deduced from the stronger Theorem~\ref{thm:mainresult} in Section~\ref{sec:mainresult}.

\subsection{The section category}
\label{subsec:sf}

Let us first agree on the meaning of a `topological category'. There are two different flavors: categories enriched in topological spaces and categories internal to topological spaces. In this paper a topological category is to be understood in the latter sense, following G. Segal~\cite{segal1968classifying}. A category~$\C$ can be described in terms of four structural maps: If~$\ob\C$ is the set of objects;~$\mor\C$ the set of morphisms; then they are source and target~$s,t\colon \mor\C \rightarrow \ob\C$, injection of objects as identity morphisms~$i\colon \ob\C\rightarrow \mor\C$ and composition~$\circ\colon  \mor\C\times_{\ob\C }\mor \C\rightarrow \mor\C$. The set~$ \mor\C\times_{\ob\C }\mor \C$ is the pullback obtained from the source and target; consists of pairs~$(m,m')$ of morphisms for which~$s(m')=t(m)$ such that~$m'\circ m$ is defined. A category~$\C$ is a \emph{topological category} if both~$\ob\C$ and~$\mor\C$ are equipped with topologies and the four structural maps~$s$,~$t$,~$i$ and~$\circ$ are all continuous. Any topological space~$X$ defines a trivial topological category~$\underline{X}$ whose object space and morphism space are both equal to~$X$. The structural maps~$s,t,i$ all agree with the identity on~$X$, whereas composition is the homeomorphism from the diagonal on~$X$ to~$X$.\\

Assume that a continuous function~$f\colon X\rightarrow \R$ from a topological space~$X$ is given. Recall that a \emph{section} of~$f$ is a continuous function~$\sigma\colon [a,b]\rightarrow X$ such that~$f\circ \sigma\colon [a,b]\rightarrow \R$ is the inclusion. Arrange all of the sections in the \emph{space of all sections}~$\mor \S_f= \coprod_{a\leq b} \S_f [a,b]$, ranging over all pairs~$a\leq b$ in~$\R$, equipped with the disjoint union topology. Notice how~$f^{-1}a$ and~$\S_f[a,a]$ are canonically homeomorphic. Hence the object space~$\ob\S_f=\coprod_{a\in\R} f^{-1}a$ comes with an evident map~$i\colon\ob\S_f\rightarrow \mor\S_f$. Source and target maps~$s,t\colon \mor\S_f\rightarrow \ob\S_f$ are obtained by restricting the evaluation~$\ev\colon \S_f[a,b]\times [a,b]\rightarrow X$ to~$a$ and~$b$, respectively. If~$\sigma\colon[a,b]\rightarrow X$ is a section, then~$s(\sigma)=\sigma(a)$ and~$t(\sigma)=\sigma(b)$. Concatenation defines canonical maps~$\S_f[b,c]\times_{f^{-1}b}\S_f[a,b]\rightarrow \S_f[a,c]$:
\[
\rho\circ \sigma(r)=\twopartdef{\sigma(r)}{a\leq r \leq b}{\rho(r)}{b\leq r \leq c}
\]
From which a composition~$\circ \colon \mor \S_f\times_{\ob\S_f}\mor\S_f\rightarrow \mor\S_f$ is deduced.

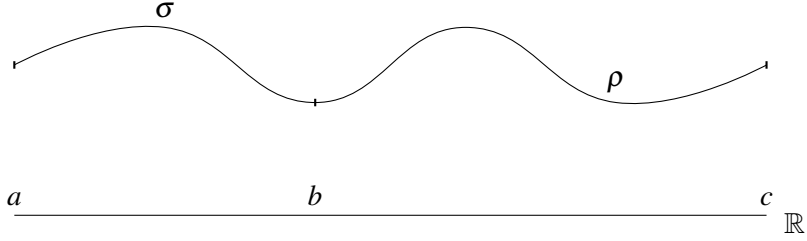
\begin{figure}[h]
\centering			
\begin{tikzpicture}
\draw [-] plot [smooth, tension=0.8] coordinates {(0,0) (2,0.5)  (4,-0.5)  (6,0.5) (8,-0.5)  (10,0)};

\draw [-] (0,-2) -- (10,-2);

\draw [-,thick] (0,-0.05) -- (0,0.05);

\draw [-,thick] (4,-0.55) -- (4,-0.45);

\draw [-,thick] (10,-0.05) -- (10,0.05);

\node [right] at (10.1,-2.1) {$\R$ };

\node [above] at (0,-2) {$a$ };

\node [above] at (4,-2) {$b$ };

\node [above] at (10,-2) {$c$ };

\node [above] at (2,0.5) {$\sigma$ };

\node [above] at (8,-0.5) {$\rho$ };

\end{tikzpicture}	
\caption{The composition of two sections $\sigma$ and $\rho$ satisfying $s(\rho)=t(\sigma)$.}	
\label{fig:comp}
\end{figure}

It is straightforward to check that~$\circ$ is associative: the morphisms are all canonically parametrized as a result of being sections. The inclusion is clearly unital. Hence what we have described is a topological category~$\S_f$.

\begin{definition}
The \emph{section category} of a continuous function~$f\colon X\rightarrow \R$ is the topological category~$\S_f$.
\end{definition}

Two continuous functions~$f\colon X\rightarrow \R$ and~$f'\colon X'\rightarrow \R$, together with a continuous function~$\phi \colon X \rightarrow X'$ over~$\R$ in the sense that~$f'\circ \phi = f$, induces a continuous functor~$\S_\phi\colon \S_f\rightarrow \S_{f'}$. So the assignment~$f\mapsto \S_f$ is functorial from the category of spaces over the real line.

Assume~$f\colon X\rightarrow \R$ to be a Reeb function from here on. Every section~$\sigma\colon [a,b]\rightarrow \R$ of~$f$ is decorated by two real numbers:~$f(s\sigma)=a$ and~$f(t\sigma)=b$. If~$A$ is a non-empty subset of~$\R$ containing the critical values of~$f$, we define the subcategory~$\C_f^A$ of sections decorated only by real numbers in~$A$:
\begin{definition}
\label{def:fullSubcat}
Let~$f\colon X\rightarrow \R$ be a Reeb function, and consider~$\mathrm{A}$ a non-empty subset of~$\R$ containing the critical values of~$f$. Define~$\C_f^A$ as the full subcategory of~$\S_f$ with object space~$\coprod_{a\subset A}f^{-1}a$.
\end{definition}

If~$A=\R$, then obviously~$\C_f^A=\S_f$. And more is true:~$\C_f^A$ and~$\S_f$ carries the same homotopical information for any choice of~$A$. We shall make this precise in Section~\ref{sec:mainresult}, after giving a brief recap on simplicial spaces.

\subsection{Some background on simplicial spaces}
\label{section:moreSimplicialStuff}

A \emph{simplicial set} is a family of sets~$X_n$,~$n\geq 0$, together with face maps~$d_i\colon X_n\rightarrow X_{n-1}$ and degeneracy maps~$s_j\colon X_n\rightarrow X_{n+1}$ satisfying certain relations~\cite[p.~4]{goerss2009simplicial}. It resembles a simplicial complex: the face map~$d_i$ applied to an~$n$--simplex is the~$(n-1)$--simplex to be interpreted as its~$i$th face. The degeneracy maps, on the other hand, encode the number of ways in which one could consider an~$n$--simplex as a degenerate~$(n+1)$--simplex. The latter is not important to us, for all homotopy types in this paper are unaffected by simply omitting degeneracy maps. This can be made precise by verifying the goodness condition in~\cite{segal1974categories}.\\

The \emph{nerve} is a functor
\[
\nerve\colon  \Cat\rightarrow \sSet.
\]
It maps a category~$\C$ to the simplicial set whose set of~$n$--simplices is the~$n$--fold pullback
\[(\nerve\C)_n=\mor\C\times_{\ob\C}\dots\times_{\ob\C}\mor\C,
\]
of composable~$n$--tuples of morphisms. The outer face maps~$d_0$ and~$d_n$ are given by omitting the first and last component, respectively, whereas the inner face maps~$d_i$ are given by composing the~$i$th and~$(i+1)$th component.

A \emph{simplicial space}~$X_\bullet$ is a simplicial set with the additional requirement that~$X_n$ is a topological space and the face and degeneracy maps are continuous. The nerve makes perfect sense as a functor
\[
\nerve\colon \topcat\rightarrow \sTop.
\] 
Denote by~$\Delta^\bullet$ the cosimplicial space with~$n$--cosimplices the standard topological~$n$--simplex~$\Delta^n$. The coface map~$d^i\colon \Delta^n \rightarrow \Delta^{n+1}$ is the inclusion of~$\Delta^{n+1}$'s~$i$th face, whereas the codegeneracy map~$s^j\colon \Delta^n\rightarrow \Delta^{n-1}$ collapses~$\Delta^n$ onto its~$j$th face. Then the \emph{geometrical realization}
\[
|\cdot|\colon \sTop\rightarrow \Top
\]
is defined by assigning to a simplicial space~$X_\bullet$ the quotient space
\[
|X_\bullet|=(\coprod\limits_n X_n \times \Delta^n  )/ \sim
\]
with relation generated by~$(d_ix,z)\sim (x,d^i z)$ and~$(s_j x,z)\sim (x,s^j z)$. Compose the realization with the nerve to define the~\emph{classifying space}
\[
\classifying=|\nerve(\cdot) |\colon \topcat\rightarrow \Top.
\]

\begin{example}
\label{example:nSimplexSf}
Let~$f\colon X\rightarrow \R$ be a continuous function and consider the associated simplicial space~$\nerve \S_f$ obtained from applying the nerve to the section category. For a sequence~$a_0\leq\dots\leq a_n$ we introduce the topological space
\[
\S_f[a_0,\dots,a_n]=\S_f[a_0,a_1]\times_{f^{-1}a_1}\S_f[a_1,a_2]\times_{f^{-1}a_2}\dots\times_{f^{-1}(a_{n-1})} \S_f[a_{n-1},a_n]
\]
of sections~$[a_0,a_n]\rightarrow X$ labeled by the given sequence.  With this notation, we may identify the space of~$n$--simplices
\[
(\nerve\S_f) = \coprod_{a_0\leq\dots\leq a_n \subset \R} \S_f[a_0,\dots,a_n].
\]

If, in addition, the function~$f\colon X\rightarrow \R$ is a Reeb function on a stratified space~$X$, then the simplicial space~$\nerve \C_f^A$ has an associated space of~$n$--simplices 
\[
(\nerve\C_f^A) = \coprod_{a_0\leq\dots\leq a_n\subset A} \S_f[a_0,\dots,a_n].
\]

\end{example}

\subsection{A proof of Theorem~\ref{intro:mainresult}}
\label{sec:mainresult}

Let~$f\colon X\rightarrow \R$ be a continuous function from a topological space~$X$ to the real line. It is tempting to presume~$X\simeq\classifying \S_f $ to be true in general (Theorem~\ref{intro:mainresult}). But that is not the case. 

\begin{example}
\label{example:BsfNotX}
There is a continuous function~$f\colon I\rightarrow \R$, from the unit interval~$I$, uniquely determined by the formula~$f(x)=x\sin(\frac{1}{x})$. Proposition ~\ref{prop:BSfZeroHomology}, to be proven, tells us that there cannot be a path from~$1$ to any other point in~$\classifying \S_f$: such a path would have to meet an infinite number of~$1$--cells up to homotopy fixing endpoints. Hence~$\classifying \S_f$ has at least two path components. In fact,~$\classifying \S_f$ is weakly equivalent to the disjoint union of two points--see Example~\ref{ex:oscillation}, to be computed.
\end{example}

Assume from here on that~$f\colon X\rightarrow \R$ is a Reeb function and that~$A$ is a subset of the real line satisfying the assumptions of Definition~\ref{def:fullSubcat}. Recall from Example~\ref{example:nSimplexSf} that the space of~$n$--simplices in~$\nerve\C_f^A$ is the disjoint union~$\coprod\S_f[\bar{a}]$, indexed over non-decreasing sequences~$\bar{a}=(a_0,\dots,a_n)$ in~$A$. Points in~$\classifying\C_f^A$ are thus classes~$[\bar{\sigma},\bar{t}]$ with~$(\bar{\sigma},\bar{t})$ in~$\S_f[\bar{a}]\times \Delta^n$. There is a map~$\phi\colon \classifying \C_f^A\rightarrow X$ which is soon to be proven a weak homotopy equivalence. For a representative with~$\bar{\sigma}=(\sigma_1,\dots,\sigma_n)$, a sequence of composable sections~$[a_{i-1},a_i]\rightarrow X$, and~$\bar{t}=(t_0,\dots,t_n)$, it is defined by~$\phi[\bar{\sigma},\bar{t}]=\sigma_n\circ\dots \circ \sigma_1 (\bar{a}\bar{t})$. The notation~$\bar{a}\bar{t}$ is short for the dot product~$a_0t_0+\dots+a_nt_n$. It is a straightforward hassle to verify that this does in fact induce a well-defined map on~$\classifying\C_f^A$. Post composition with~$f$ now defines a map~$\bar{f}=f\circ \phi$ from~$\classifying\C_f^A$ to the real line. Applying~$f$ to the above formula reveals that~$\bar{f}[\bar{\sigma},\bar{t}]=\bar{a}\bar{t}$ on representatives--for the composition~$\sigma_n\circ\dots \circ \sigma_1 $ in~$\C_f^A$ is a section of~$f$.

Let us establish some notation before proving some convenient lemmas. Given a finite non-decreasing sequence~$\bar{a}=(a_0,\dots,a_n)$ in~$A$ and a subspace~$K$ of~$\Delta^n$, we denote by~$\langle\bar{a}, K \rangle$ the image of
~$\coprod \S_f[\bar{b}]\times K$, ranging over all subsequences~$\bar{b}$ of~$\bar{a}$, under the quotient map that defines~$\classifying \C^A_f$. In particular,~$\langle \bar{a}, \Delta^n\rangle$ is the subspace generated by classes whose representative is decorated by a subsequence of~$\bar{a}$.

\begin{lemma}
\label{lemma:compactIntoCfA}
If~$m\colon K\rightarrow \classifying \C_f^A$ is a map from a compact space~$K$, then there is an increasing sequence~$\bar{a}$ in~$A$ of length~$n$ such that the image of~$K$ is contained in~$\langle \bar{a},\Delta^n\rangle$.
\end{lemma}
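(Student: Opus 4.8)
The plan is to compare $\classifying\C_f^A$ with the classifying space of the \emph{discrete} poset $(A,\le)$, whose realization is an honest CW complex, and then invoke the standard fact that a compact subset of a CW complex lies in a finite subcomplex. The bridge is a ``forget everything but the labels'' map. Regard $A$ as a poset under the order inherited from $\R$, and let $\nerve(A,\le)$ be its nerve, a simplicial \emph{set} whose $n$--simplices are the non-decreasing $(n+1)$--tuples in $A$. Sending a decorated tuple of sections in $(\nerve\C_f^A)_n=\coprod_{\bar a}\S_f[\bar a]$ to the sequence $\bar a$ indexing its component defines, level-wise, a map $\ell_n\colon (\nerve\C_f^A)_n\rightarrow \nerve(A,\le)_n$. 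Since the source carries the disjoint union topology over the discrete index set of sequences and the target is discrete, each $\ell_n$ is continuous; and because dropping (resp.\ repeating) a label corresponds to composing sections (resp.\ inserting an identity), the maps $\ell_n$ commute with faces and degeneracies. Hence $\ell$ is a map of simplicial spaces and realizes to a continuous map $|\ell|\colon \classifying\C_f^A\rightarrow |\nerve(A,\le)|$.

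Next I would run the compactness argument downstairs. The space $|\nerve(A,\le)|$ is the geometric realization of a simplicial set, hence a CW complex with one open cell for each strictly increasing sequence in $A$. The composite $|\ell|\circ m$ has compact image, so it is contained in a finite subcomplex $Y$. Such a $Y$ involves only finitely many vertices; write them in increasing order as $\bar a=(a_0<\dots<a_n)$ and set $A'=\{a_0,\dots,a_n\}$. Every simplex of $Y$ then has all of its vertices in $A'$, so $Y$ is contained in the full subcomplex $|\nerve(A',\le)|$ (which is just a copy of $\Delta^n$). Consequently $m(K)\subseteq |\ell|^{-1}\bigl(|\nerve(A',\le)|\bigr)$.

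It remains to identify this preimage with $\langle\bar a,\Delta^n\rangle$, and this is the step that needs the most care. Set-theoretically every point of a realization has a unique representative $[\bar\sigma,\bar t]$ with $\bar\sigma$ nondegenerate---equivalently with strictly increasing label sequence $\bar b$---and $\bar t$ interior to its simplex; under $|\ell|$ such a point goes to the interior of the open cell indexed by $\bar b$. Thus $|\ell|(p)\in|\nerve(A',\le)|$ exactly when $\bar b$ is a subsequence of $\bar a$, which is precisely the condition that $p\in\langle\bar a,\Delta^n\rangle$. Therefore $|\ell|^{-1}\bigl(|\nerve(A',\le)|\bigr)=\langle\bar a,\Delta^n\rangle$ and $m(K)\subseteq\langle\bar a,\Delta^n\rangle$, as claimed. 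The main obstacle is this last identification: one must be comfortable that the thin realization of the simplicial space $\nerve\C_f^A$ still admits unique nondegenerate representatives at the level of underlying sets (so that ``the label sequence of a point'' is well defined), and that $\langle\bar a,\Delta^n\rangle$ coincides with the subspace $\classifying\C_f^{A'}$ sitting inside $\classifying\C_f^A$. Pleasingly, only the combinatorics of the realization enters here; the Reeb hypothesis is used only through the standing definitions and is not needed for this particular reduction.
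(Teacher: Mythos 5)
Your proof is correct, and it takes a genuinely different route from the paper's. The paper runs the compactness argument entirely inside~$\classifying\C_f^A$: since~$K$ is compact, $m$ factors through a skeleton~$\sk_k\classifying\C_f^A=\cup\,\langle\bar b,\Delta^k\rangle$, the image can meet only finitely many of the pieces~$\langle\bar b_i,\Delta^k\rangle$, and the finitely many label sequences~$\bar b_i$ are then merged into a single non-decreasing sequence~$\bar a$, so every point of the image is decorated by a subsequence of~$\bar a$. You instead transport compactness to an honest CW complex via the label map~$\ell\colon\nerve\C_f^A\rightarrow\nerve(A,\le)$ and pull a finite subcomplex back. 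What your route buys: the facts you invoke downstairs (realizations of simplicial sets are CW complexes; compact subsets lie in finite subcomplexes) are completely classical, whereas the paper's skeletal argument quietly uses the analogous statements for the realization of a simplicial \emph{space} -- which is not a CW complex, the section spaces being non-discrete -- so those require the same colimit-topology verifications your construction sidesteps. You also obtain the sharper identification~$|\ell|^{-1}\bigl(|\nerve(A',\le)|\bigr)=\langle\bar a,\Delta^n\rangle$, an equality rather than a containment, which in effect recognizes~$\langle\bar a,\Delta^n\rangle$ as the copy of~$\classifying\C_f^{A'}$ inside~$\classifying\C_f^A$. The cost is the bookkeeping you rightly flag at the end, but it does close: the underlying set of the realization of a simplicial space coincides with the realization of its underlying simplicial set (the defining relations are topology-independent), so the Eilenberg--Zilber canonical form -- unique representative with nondegenerate simplex and interior barycentric coordinates -- is available; and a simplex of~$\nerve\C_f^A$ is nondegenerate exactly when its label sequence is strictly increasing, since every morphism of~$\S_f$ over a degenerate interval~$[a,a]$ is an identity. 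With those two observations your final step is airtight, and, as you note, the Reeb hypothesis plays no role in either proof.
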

\begin{proof}
Let
\[
\sk_k \classifying \C_f^A =(\coprod\limits_{q\leq k} (\nerve\S_f)_q\times \Delta^q)/\sim
\]
be the~$k$--skeleton of~$\classifying \C_f^A$. It is well-known that the map~$m$ must factor through some~$k$--skeleton of~$\classifying\C_f^A$, because~$K$ is compact. In our notation, one may alternatively write~$\sk_k \classifying\C_f^A =\cup \langle\bar{b},\Delta^k \rangle$, ranging over all non-decreasing sequences~$\bar{b}=(b_0,\dots,b_k)$ in~$A$. Hence we can deduce even more: the image of~$m$ can only meet finitely many subspaces of the form~$\langle \bar{b} , \Delta^k \rangle$, i.e. it factorizes through~$\cup_{i=0,\dots, q} \langle\bar{b}_i,\Delta^m\rangle $ for finitely many sequences~$\bar{b}_i=(b_{i,0},\dots,b_{i,k})$. Include and order all the components~$b_{i,j}$ to define the bigger non-decreasing sequence~$\bar{a}=(a_0,\dots,a_n)$. A point~$[\bar{\sigma},\bar{t}]$ in the image of~$m$ comes with a subsequence of~$\bar{a}$.
\end{proof}

Recall the spine~$\mathrm{sp}\Delta^n$ of the topological~$n$--simplex. It is the subspace parametrized by tuples~$\bar{t}=(t_0,\dots,t_n)$ satisfying that at most two successive entries are non-zero; there is an~$i$ such that~$t_j=0$ for all~$j$ except possibly~$j=i-1,i$. Points, or classes, in~$\langle \bar{a}, \mathrm{sp} \Delta^n\rangle$ has a particularly nice presentation: a point~$[\bar{\sigma},\bar{t}]$ in~$\langle \bar{a}, \mathrm{sp} \Delta^n\rangle$ can be represented~$[\sigma_i,(t_{i-1},t_{i})]$, because of how~$\bar{t}=(0,\dots,t_{i-1},t_{i},0\dots,0)$ for some~$i$.

\begin{lemma}
\label{lemma:spine}
Consider a Reeb function~$f\colon X\rightarrow \R$ and~$\bar{a}$ a non-decreasing sequence in~$A$. The subspace~$\langle \bar{a}, \Delta^n \rangle $ deformation retracts onto~$\langle \bar{a}, \mathrm{sp}\Delta^n \rangle $ in~$\classifying \C_f^A$.
\end{lemma}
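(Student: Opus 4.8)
The plan is to realize the asserted deformation retraction by retracting the simplex coordinate $\bar t$ onto the spine while holding the section coordinate $\bar\sigma$ fixed; the whole subtlety is that this prescription must descend from $\S_f[\bar a]\times\Delta^n$ to the quotient $\langle\bar a,\Delta^n\rangle$. First I would record exactly how points of $\langle\bar a,\Delta^n\rangle$ are identified. Writing $q$ for the quotient map defining $\classifying\C_f^A$ (which restricts to a quotient map onto $\langle\bar a,\Delta^n\rangle$, the image of $\S_f[\bar a]\times\Delta^n$), I note that a section over $[a_0,a_n]$ is the same data as a tuple of composable sections over the subintervals determined by $\bar a$, restriction and concatenation being mutually inverse homeomorphisms. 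Hence the iterated face maps of $\nerve\C_f^A$ send $\bar\sigma\in\S_f[\bar a]$ to the restriction of $\bar\sigma$ to any subinterval, and if $\bar t$ has support with smallest index $p$ and largest index $q$, then $q(\bar\sigma,\bar t)=q(\bar\tau,\bar t)$ holds precisely when $\bar\sigma$ and $\bar\tau$ agree as sections on $[a_p,a_q]$. The essential feature of this Segal-type decomposition is that the gluing across a face depends only on the two endpoints of the support of $\bar t$, never on its interior.

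Next I would construct a strong deformation retraction $\gamma\colon\Delta^n\times[0,1]\to\Delta^n$ onto $\mathrm{sp}\Delta^n$ that fixes the spine pointwise and, crucially, maps each face $F_{[p,q]}$ spanned by the consecutive vertices $p,p+1,\dots,q$ into itself. Granting such a $\gamma$, I would set $H(q(\bar\sigma,\bar t),s)=q(\bar\sigma,\gamma_s\bar t)$. Well-definedness is where the first paragraph is used: if $q(\bar\sigma,\bar t)=q(\bar\tau,\bar t)$ then $\bar\sigma,\bar\tau$ agree on $[a_p,a_q]$ with $p,q$ the support endpoints of $\bar t$; since $\gamma_s$ preserves $F_{[p,q]}$, the support endpoints $p',q'$ of $\gamma_s\bar t$ satisfy $p\le p'\le q'\le q$, so $\bar\sigma,\bar\tau$ still agree on the smaller interval $[a_{p'},a_{q'}]$ and hence $q(\bar\sigma,\gamma_s\bar t)=q(\bar\tau,\gamma_s\bar t)$. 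Continuity of $H$ follows because $q\times\mathrm{id}_{[0,1]}$ is again a quotient map, $[0,1]$ being locally compact, and the remaining properties — $H_0=\mathrm{id}$, $H_1$ landing in $\langle\bar a,\mathrm{sp}\Delta^n\rangle$, and $H_s$ fixing $\langle\bar a,\mathrm{sp}\Delta^n\rangle$ pointwise — transfer verbatim from the corresponding properties of $\gamma$.

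To produce $\gamma$ I would induct on $n$, building coherently chosen retractions on all interval faces simultaneously. For $n\le 1$ the spine is everything and $\gamma=\mathrm{id}$. For the inductive step, the retractions already built on the two facets $F_{[0,n-1]}$ and $F_{[1,n]}$ agree on their overlap $F_{[1,n-1]}$ and so assemble into a strong deformation retraction of $A=F_{[0,n-1]}\cup F_{[1,n]}$ onto $\mathrm{sp}\Delta^n$; I would then precompose with a strong deformation retraction of $\Delta^n$ onto $A$ fixing $A$ pointwise, which exists since $A\hookrightarrow\Delta^n$ is an inclusion of contractible complexes along a cofibration. Because every proper interval face lies in $A$ while the only remaining interval face is $\Delta^n$ itself, mapped into itself trivially, the composite preserves all interval faces and the induction closes.

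The hard part is this last construction. The obvious radial or mean-preserving retractions of $\Delta^n$ onto its spine do not descend, because they route mass through non-consecutive faces — for instance the edge $[0,2]$ of $\Delta^2$ — and thereby enlarge the support, which is incompatible with the gluing. The point is to channel the retraction so that the support-endpoint interval only ever shrinks; interval-face preservation is precisely the bookkeeping that enforces this, and it descends to $\langle\bar a,\Delta^n\rangle$ exactly because the section category satisfies the Segal condition, so that the identifications across a face see only the endpoints of the support.
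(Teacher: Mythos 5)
Your proposal is correct in substance, but it takes a genuinely different route from the paper's. Both arguments share the same skeleton: hold the section coordinate~$\bar{\sigma}$ fixed, move only the simplex coordinate~$\bar{t}$, and descend to the quotient via exactly the criterion you isolate --- the identifications see only the restriction of the sections to the interval~$[a_p,a_q]$ spanned by the support of~$\bar{t}$, so any spine retraction that never enlarges this span is compatible. The difference is how the retraction of~$\Delta^n$ onto~$\mathrm{sp}\Delta^n$ is produced. You build it abstractly, by induction over the interval faces~$F_{[p,q]}$, gluing the facet retractions on~$F_{[0,n-1]}\cup F_{[1,n]}$ and invoking that this union includes into~$\Delta^n$ as a cofibration between contractible complexes; this buys a retraction independent of~$\bar{a}$, at the cost of non-canonicity and coherence bookkeeping. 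The paper instead notes that~$\bar{f}[\bar{\sigma},\bar{t}]=\bar{a}\bar{t}$ is injective on the spine when~$\bar{a}$ is strictly increasing, so each value~$b\in[a_0,a_n]$ determines a unique spine point~$\bar{s}_b$, and takes the explicit straight-line homotopy~$R([\bar{\sigma},\bar{t}],t)=[\bar{\sigma},(1-t)\bar{t}+t\bar{s}_{\bar{a}\bar{t}}]$. Intermediate points of this homotopy have support contained in the union of the supports of~$\bar{t}$ and~$\bar{s}_{\bar{a}\bar{t}}$, hence within the span of the support of~$\bar{t}$, so it satisfies your shrinking criterion automatically; your closing claim that no natural retraction descends is therefore overstated --- the fiberwise-linear one does, it preserves~$\bar{f}$ on the nose, and this fiberwise property conveniently keeps the homotopy inside~$\bar{f}^{-1}(N)$ where the lemma is applied in the proof of Theorem~\ref{thm:mainresult} (your interval-face preservation achieves the same once one arranges~$\bar{a}\subset N$, so nothing downstream breaks).

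Two repairs to your write-up. First, $\langle\bar{a},\Delta^n\rangle$ is by definition the image of~$\coprod_{\bar{b}}\S_f[\bar{b}]\times K$ over \emph{all} subsequences~$\bar{b}$ of~$\bar{a}$, not of~$\S_f[\bar{a}]\times\Delta^n$ alone: a section decorated by an outer subsequence, say over~$[a_1,a_n]$, need not extend past~$a_1$, so such points are not re-representable by the full sequence. Inner deletions can always be undone by restriction, so every point is representable with an interval decoration~$(a_p,\dots,a_q)$, and your coherently chosen retractions on interval faces then handle these points --- but this case must be stated, since your well-definedness check as written only treats representatives in~$\S_f[\bar{a}]\times\Delta^n$. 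Second, like the paper (whose proof also quietly assumes this) you should reduce from non-decreasing to strictly increasing~$\bar{a}$: repeated values give degenerate simplices whose cells are collapsed onto the reduced sequence, after which both arguments apply verbatim.
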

\begin{proof}
A point~$[\bar{\sigma},\bar{t}]=[\sigma_i,(t_{i-1},t_{i})]$ in~$\langle \bar{a}, \mathrm{sp} \Delta^n\rangle$ is mapped to~$t_{i-1}a_{i-1}+t_ia_i$ under~$\bar{f}$. For a fixed~$\bar{\sigma}$ in~$\S_f[\bar{a}]$ the map~$\bar{f}\circ(\bar{\sigma},\id_{\mathrm{sp}\Delta^n})\colon \mathrm{sp}\Delta^n\rightarrow \R$ is injective, because~$\bar{a}$ is an increasing sequence. So for every~$a_0\leq b\leq a_n$ and~$\bar{\sigma}$ there is a unique~$\bar{s}_b$ in~$\mathrm{sp}\Delta^n$ such that~$\bar{f}[\bar{\sigma},\bar{s}_b]=b$. Two points in~$\langle \bar{a}, \Delta^n \rangle \cap \bar{f}^{-1}b$ both associate to the same~$\bar{s}_b$. The homotopy
\[
R([\bar{\sigma},\bar{t}],t)=[\bar{\sigma},(1-t)\bar{t}+t\bar{s}_{\bar{f}[\bar{\sigma},\bar{t}]}]
\]
is thus well-defined. And it satisfies~$R(-,0)=\id_{\langle \bar{a}, \Delta^n \rangle }$ whereas the image of~$R(-,1)$ is contained in~$\langle \bar{a}, \mathrm{sp}\Delta^n \rangle $.  It is a deformation retract because~$\bar{s}_{\bar{f}[\bar{\sigma},\bar{t}]}=\bar{t}$ whenever~$\bar{t}$ is in the spine; the homotopy is trivial when restricted to the spine.
\end{proof}

The third lemma is analogous to Lemma~\ref{lemma:oneCriticalStratified}.

\begin{lemma}
\label{lemma:deformationRetract}
Consider a Reeb function~$f\colon X\rightarrow \R$ and~$\bar{a}=(a_0,\dots,a_n)$ an increasing subsequence of~$A$ such that~$[a_0,a_n]$ contains at most one critical value of~$f$. Then the subspace~$\langle \bar{a},\mathrm{sp}\Delta^n\rangle $ of~$\classifying\C_f^A$ deformation retracts onto
\begin{enumerate}[i)]
	\item $f^{-1}a$ for any~$a$ in~$\bar{a}$ if there is no critical value or
	\item $f^{-1}a$ for~$a$ equal to the critical value, otherwise. 
\end{enumerate}
\end{lemma}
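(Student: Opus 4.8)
The plan is to realize $\langle \bar a, \mathrm{sp}\Delta^n\rangle$ as an iterated double mapping cylinder of the spans $f^{-1}(a_{i-1}) \xleftarrow{s} \S_f[a_{i-1},a_i] \xrightarrow{t} f^{-1}(a_i)$ and to collapse it onto a single fibre by flowing, using the reparametrised flow $g$ from Proposition~\ref{proposition:modifiedFlowLines}. Recall that a point of $\langle \bar a, \mathrm{sp}\Delta^n\rangle$ has a unique interior representative $[\sigma_i,(t_{i-1},t_i)]$ on the $i$-th edge, with height $\bar{f}[\sigma_i,(t_{i-1},t_i)] = t_{i-1}a_{i-1}+t_ia_i =: b$ and $\phi$-image $x = \sigma_i(b) \in f^{-1}(b)$. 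I would first reduce both cases to a single \emph{endpoint collapse}. Fix the target vertex $a=a_{j_0}$; in case i) this is an arbitrary $a_j$, and in case ii) it is the critical value, which we may assume to be one of the $a_j$. Split $\bar a$ at $a_{j_0}$, retract the part below $a_{j_0}$ upward onto $f^{-1}(a_{j_0})$ and the part above downward onto it, and glue along the common fibre $f^{-1}(a_{j_0})$. In each half the closed height-interval contains no critical value other than possibly the shared endpoint $a_{j_0}$, so $g$ is available on the relevant open sublevel region.

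For the endpoint collapse, say of $\langle (a_0,\dots,a_m),\mathrm{sp}\Delta^m\rangle$ upward onto $f^{-1}(a_m)$, I would use a two-phase homotopy. \emph{Phase 1 (re-sectioning).} Keeping the spine coordinate fixed, replace the section $\sigma_i$ by the flow-section through $x$ along $\sigma_i^u(r)=g_{\sigma_i((1-u)r+ub)}(r)$. Property i) of $g$ makes each $\sigma_i^u$ a section, while property ii) gives $\sigma_i^0=\sigma_i$, $\sigma_i^1=g_x$ and $\sigma_i^u(b)=x$ for all $u$; in particular the $\phi$-image is unchanged, and evaluating at a vertex (where $b=a_{i-1}$ or $b=a_i$) the homotopy is stationary, so it descends to the quotient. \emph{Phase 2 (sliding up).} Now every representative is a flow-section, so I slide the height monotonically from $b$ to $a_m$, representing throughout by $g_x$. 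This is consistent across the interior vertices because $g_{g_x(a_j)}=g_x$, as the flow-section through a point of a flow-line is that flow-line, and it terminates at $g_x(a_m)\in f^{-1}(a_m)$. Both phases fix $f^{-1}(a_m)$, so the composite is a deformation retraction; the downward collapse and the gluing of the two halves are identical.

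The main obstacle is continuity at the critical fibre in case ii). When the shared endpoint $a_{j_0}=c$ is the critical value, $g$ is defined only over the open region $f^{-1}(\,\cdot\,,c)$ and not over $f^{-1}(c)$ itself; one must extend the construction by the identity on $f^{-1}(c)$ and verify that $x\mapsto g_x(c)$, and with it the whole homotopy, remains continuous as the height tends to $c$. This is precisely the boundary continuity established for the extension map $e$ in the proof of Proposition~\ref{proposition:modifiedFlowLinesManifold} and already exploited in Lemma~\ref{lemma:oneCriticalStratified} to flow onto a critical level, and I would invoke that argument here. Everywhere else continuity is routine: $g$ and $\phi$ are continuous and the edge-crossings glue by flow-coherence, so in case i), where no endpoint is critical and $g$ is defined in a neighbourhood of every relevant height, the retraction is entirely unobstructed.
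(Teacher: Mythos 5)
Your proposal is correct and follows essentially the same route as the paper: the same reduction by splitting $\bar a$ at the target fibre $f^{-1}(a_{j_0})$ and gluing, and your Phase-1 re-sectioning $\sigma_i^u(r)=g_{\sigma_i((1-u)r+ub)}(r)$ is literally the paper's homotopy $g_\sigma\circ H_{b}(u)$ built from Proposition~\ref{proposition:modifiedFlowLines}, only applied to all edges simultaneously and followed by a global height-slide using $g_{g_x(a_j)}=g_x$, where the paper instead interleaves the two phases in a single formula and inducts edge-by-edge on $n$. Your explicit flagging of continuity at the critical fibre (resolved via the extension map $e$ from Proposition~\ref{proposition:modifiedFlowLinesManifold}) is, if anything, more careful than the paper, which simply extends $R_n$ by the identity on $f^{-1}a_{n-1}$ without comment.
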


\begin{proof}
We may assume~$a=a_0$, much like we only consider the case~$[a-n,a]$ in the proof of Lemma~\ref{lemma:oneCriticalStratified}: if~$a_i=a$, then we simply identify~$\langle \bar{a},\mathrm{sp}\Delta^n\rangle $ as the homotopy pushout of
\[
\langle (a_0,\dots,a),\mathrm{sp}\Delta^i\rangle\leftarrow f^{-1}a\rightarrow \langle (a,\dots,a_n),\mathrm{sp}\Delta^{n-i}\rangle
\]
and reduce to the case~$a=a_0$ or~$a=a_n$.

The deformation retract will be defined inductively in~$n$. If~$n=0$, then there is nothing to prove since~$\langle a,\Delta^0\rangle$ is equal to~$f^{-1}a$. Assume the existence of a deformation retract
\[
R_{n-1}\colon \langle (a_0,\dots, a_{n-1}), \mathrm{sp}\Delta^{n-1}  \rangle \times I \rightarrow \langle (a_0,\dots, a_{n-1}), \mathrm{sp}\Delta^{n-1}  \rangle
\]
onto~$f^{-1}a$. Then we need only compute a deformation retract of~$\langle \bar{a}, \mathrm{sp}\Delta^n \rangle $ onto~$\langle (a_0,\dots, a_{n-1}), \mathrm{sp}\Delta^{n-1}  \rangle$. But~$\langle \bar{a}, \mathrm{sp}\Delta^n \rangle $ is the union of~$\langle (a_0,\dots, a_{n-1}), \mathrm{sp}\Delta^{n-1}  \rangle$ and~$\langle (a_{n-1},a_n), \Delta^1\rangle$, so surely it suffices to deformation retract~$\langle (a_{n-1},a_n), \Delta^1\rangle$ onto~$f^{-1}a_{n-1}$.

A point~$[x]$ in~$f^{-1}(a_n)$, considered as a subspace of~$\langle (a_{n-1},a_n), \Delta^1\rangle$, has a canonical choice of representative~$[g_x, (0,1)]$ where~$g_x\colon [a_{n-1},a_n]\rightarrow X$ is a reparametrized flow-line provided by Proposition~\ref{proposition:modifiedFlowLines}. Hence every point in~$\langle (a_{n-1},a_n),\Delta^1\rangle\setminus f^{-1}a_{n-1} $ is represented~$[\sigma, (t_0,t_1)]$ with~$\sigma\colon [a_{n-1},a_n]\rightarrow X$ a section. To every section~$\sigma\colon [a_{n-1}, a_n]\rightarrow X$ we associate the map~$g_\sigma \colon [a_{n-1},a_n]^2\rightarrow X$ determined by~$g_\sigma (b,c)=g_{\sigma(b)}(c)$, where~$g_{\sigma (b)}$ is the reparametrized flow-line through~$\sigma(b)$. The section~$\sigma$ can be recovered from the composition~$[a_{n-1},a_n]\xrightarrow{d}[a_{n-1},a_n]^2\xrightarrow{g_\sigma} X$ where~$d$ is the diagonal map~$b\mapsto (b,b)$. For a fixed value~$b_0$, we also recover~$g_{\sigma(b_0)}$ from the composition~$[a_{n-1},a_n]\xrightarrow{(b_0,\id)} [a_{n-1},a_n]^2\xrightarrow{g_\sigma} X$. The straight line homotopy~$H_{b_0}\colon I\rightarrow \map ([a_{n-1,a_n}],[a_{n-1,a_n}]^2)$, 
\[
H_{b_0}(t)(b)=((1-t)b+tb_0, b)
\]
between~$d$ an~$(b_0,\id)$ will give us the desired deformation retract. Indeed, for a point~$[\sigma, (t_0,t_1)]$, contained in~$\langle (a_{n-1},a_n),\Delta^1\rangle\setminus f^{-1}a_{n-1}$, we declare
\[
R_n([\sigma, (t_0,t_1)],t)=[g_\sigma \circ H_{\bar{f}[\sigma, (t_0,t_1)]}(t), (1-t) (t_0,t_1) + t(1,0) ].
\]
When~$t=0$ the output is~$[\sigma, (t_0,t_1)]$ whereas~$t=1$ yields~$[g_{\phi[\sigma, (t_0,t_1)]}, (1,0)]=[g_{\phi[\sigma, (t_0,t_1)]} (a_{n-1})]$ in~$f^{-1} a_{n-1}$. Extend~$R_n$ to~$\langle(a_{n-1},a_n), \Delta^1\rangle$ by declaring~$R_n([x], t)=[x]$ for a class~$[x]$ in~$f^{-1}a_{n-1}$. 
\end{proof}

Combining Lemmas~\ref{lemma:spine} and~\ref{lemma:deformationRetract} imply the following.
\begin{lemma}
\label{lemma:refinement}
Assume that~$A'$ is constructed from~$A$ by adding a single real value. Then the inclusion~$\C_f^A\hookrightarrow \C_f^{A'}$ induces a weak homotopy equivalence on classifying spaces.
\end{lemma}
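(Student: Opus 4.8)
The plan is to exhibit $A'=A\cup\{a^\ast\}$ for a single new value $a^\ast$, and to analyze the inclusion $\C_f^A\hookrightarrow \C_f^{A'}$ on classifying spaces by reducing a global weak equivalence to a local, one-critical-value statement. If $a^\ast$ already lies in $A$ there is nothing to prove, so assume $a^\ast\notin A$. The key structural observation is that $\classifying\C_f^{A'}$ is built from $\classifying\C_f^A$ by attaching exactly the cells whose decorating sequences involve $a^\ast$, and since $a^\ast$ is a single value these new sequences differ from old ones only by the insertion of $a^\ast$ into some slot. I would therefore first reduce to a situation where the extra value $a^\ast$ sits strictly between two successive elements of $A$ (if $a^\ast$ is above or below all of $A$, or equals a critical value, the arguments are analogous or trivial, since critical values already lie in $A$), so that the interval $[c,d]$ bracketing $a^\ast$ by successive elements $c<d$ of $A$ contains no critical value of $f$.

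\textbf{Reducing to the compact, finite-sequence case.} By Lemma~\ref{lemma:compactIntoCfA}, any map from a compact space into either classifying space factors through some $\langle\bar a,\Delta^n\rangle$ for a finite increasing sequence $\bar a$ in the relevant value set. Since a weak homotopy equivalence is detected on homotopy groups, and spheres and disks are compact, it suffices to compare the subspaces $\langle\bar a,\Delta^n\rangle$ over finite $\bar a$ compatibly. Concretely, for a finite sequence $\bar a$ in $A$ whose span $[a_0,a_n]$ brackets $a^\ast$, let $\bar a'$ be the sequence obtained by inserting $a^\ast$. The goal becomes showing the inclusion $\langle\bar a,\Delta^n\rangle\hookrightarrow\langle\bar a',\Delta^{n+1}\rangle$ is a weak homotopy equivalence, uniformly enough to pass to the colimit over all such finite sequences.

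\textbf{Collapsing to the spine and applying the deformation retract.} Here is where the two prior lemmas do the work. By Lemma~\ref{lemma:spine}, both $\langle\bar a,\Delta^n\rangle$ and $\langle\bar a',\Delta^{n+1}\rangle$ deformation retract onto their spine versions $\langle\bar a,\mathrm{sp}\Delta^n\rangle$ and $\langle\bar a',\mathrm{sp}\Delta^{n+1}\rangle$, and these retractions are compatible with the inclusion. So it suffices to compare the spine subspaces. Now I would localize: the only difference between $\bar a$ and $\bar a'$ occurs across the single edge of the spine spanning the successive pair $c<d$ of elements of $A$ that bracket $a^\ast$, which gets subdivided into the two edges $c<a^\ast$ and $a^\ast<d$. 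Since $[c,d]$ contains no critical value, Lemma~\ref{lemma:deformationRetract} applies to both $\langle(c,d),\mathrm{sp}\Delta^1\rangle$ and $\langle(c,a^\ast,d),\mathrm{sp}\Delta^2\rangle$: each deformation retracts onto the common fiber $f^{-1}c$ (there being no critical value in the interval). Writing $\langle\bar a,\mathrm{sp}\Delta^n\rangle$ as an iterated homotopy pushout along the fibers $f^{-1}(a_i)$, exactly as in the proof of Lemma~\ref{lemma:deformationRetract}, the inclusion $\langle\bar a,\mathrm{sp}\Delta^n\rangle\hookrightarrow\langle\bar a',\mathrm{sp}\Delta^{n+1}\rangle$ becomes, pushout-factor by pushout-factor, an identity except on the one factor across $[c,d]$, where it is the inclusion of one contractible-onto-$f^{-1}c$ piece into another. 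Since both retract onto the same $f^{-1}c$ compatibly, that local inclusion is a homotopy equivalence, and gluing along the shared fibers $f^{-1}c,f^{-1}d$ yields a weak equivalence on the whole.

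\textbf{Main obstacle.} The genuinely delicate point is not any single deformation retract but the \emph{compatibility and gluing} across the colimit: one must check that the spine retractions of Lemma~\ref{lemma:spine} and the flow-line retractions of Lemma~\ref{lemma:deformationRetract} can be chosen to agree on the overlapping fibers $f^{-1}(a_i)$ and to be natural in the inclusion $\bar a\hookrightarrow\bar a'$, so that the local homotopy equivalence on the $[c,d]$-factor assembles into a bona fide weak equivalence of the full subspaces and then survives the filtered colimit over finite sequences. Establishing that the homotopy pushout decomposition is preserved by the inclusion, and that the relevant gluing maps into $f^{-1}c$ and $f^{-1}d$ are the same before and after inserting $a^\ast$, is where the argument must be pinned down carefully; everything else is an application of the three preceding lemmas together with the compactness reduction.
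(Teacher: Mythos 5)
Your overall architecture---reduce by compactness (Lemma~\ref{lemma:compactIntoCfA}) to finite sequences, pass to spines (Lemma~\ref{lemma:spine}), localize at the inserted value, and glue---is exactly the route the paper indicates (it offers no written proof beyond the remark that Lemmas~\ref{lemma:spine} and~\ref{lemma:deformationRetract} combine). But your local step fails in the central case. If~$c<a^\ast<d$ with~$c,d$ the neighbouring values in~$A$, only the \emph{open} interval~$(c,d)$ is guaranteed free of critical values; the endpoints may be critical, and in the paper's principal application~$A=\{\text{critical values}\}$ both are. Then~$[c,d]$ contains two critical values and Lemma~\ref{lemma:deformationRetract} simply does not apply to~$(c,d)$ or~$(c,a^\ast,d)$, since its hypothesis allows at most one critical value in the span. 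Worse, even where it does apply, its conclusion is unusable for your gluing: a deformation retraction of the edge piece onto~$f^{-1}c$ moves the other attaching fiber~$f^{-1}d$, so it cannot be glued with the identity on the remaining pushout factors---the gluing requires an equivalence \emph{relative to both} end fibers, which a collapse onto one end never provides. Two further slips: for arbitrary~$A$ (any set containing the critical values) successive elements bracketing~$a^\ast$ need not exist, though after the compactness reduction one can choose adjacent entries~$c,d$ of the finite sequence with~$(c,d)$ critical-free, using discreteness of the critical values; and~$\langle\bar a,\mathrm{sp}\Delta^n\rangle$ is \emph{not} a subspace of~$\langle\bar a',\mathrm{sp}\Delta^{n+1}\rangle$, because the coarse cylinder~$\langle(c,d),\Delta^1\rangle$ does not lie in the refined spine, so your claimed ``compatibility of the retractions with the inclusion'' has no inclusion to refer to.

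The repairable version of the local step is to compare both models inside~$\langle(c,a^\ast,d),\Delta^2\rangle$, which contains the coarse edge as a face and deformation retracts onto the subdivided spine by Lemma~\ref{lemma:spine} with~$f^{-1}c$ and~$f^{-1}d$ fixed pointwise. The composite carries the double mapping cylinder of~$f^{-1}c\leftarrow\S_f[c,d]\rightarrow f^{-1}d$ to the concatenation of the cylinders over~$[c,a^\ast]$ and~$[a^\ast,d]$, via the restriction maps~$\S_f[c,d]\rightarrow\S_f[c,a^\ast]$ and~$\S_f[c,d]\rightarrow\S_f[a^\ast,d]$. These restrictions are homotopy equivalences by the flow-line family of Proposition~\ref{proposition:modifiedFlowLines}, exactly as in the argument of Proposition~\ref{prop:finite}, which needs only that~$(c,d)$ contain no critical value---critical endpoints are allowed. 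An equivalence of homotopy pushouts relative to the shared end fibers then glues with the identity on the remaining factors and survives the filtered colimit over finite sequences. In short: the engine of the local comparison must be the flow-line equivalences of section spaces rel both ends, not Lemma~\ref{lemma:deformationRetract}'s collapse onto a single fiber; with that substitution your outline goes through.
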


We are prepared to prove that the homotopy types of~$\classifying \C_f^A$ and~$X$ coincide for all eligible~$A$. In particular, Theorem~\ref{intro:mainresult} follows.

\begin{theorem}
\label{thm:mainresult}
Let~$f\colon X\rightarrow \R$ be a Reeb function. The map~$\phi \colon \classifying \C_f^A\rightarrow X$ is a weak homotopy equivalence for all non-empty subsets~$A\subset \R$ containing the critical values of~$f$.
\end{theorem}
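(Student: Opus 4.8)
The plan is to show that $\phi$ induces isomorphisms on all homotopy groups and a bijection on path components, for every choice of basepoint, which is exactly the assertion that it is a weak homotopy equivalence. First I would dispose of the parameter $A$. By Example~\ref{example:existenceCritical} I may assume $f$ has at least one critical value, and by Lemma~\ref{lemma:refinement} together with the two-out-of-three property — noting that the inclusions $\C_f^A\hookrightarrow\C_f^{A'}$ commute with the maps to $X$ — the statement for an arbitrary $A$ follows from the statement for a single convenient $A$, by comparing the two through their union. After enlarging $A$ by a filtered union of finite additions (each a weak equivalence along a closed cofibration, so the colimit is again one), I may assume $A$ contains a point strictly between any two consecutive critical values.

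The heart of the argument is the local claim that for every finite increasing $\bar a=(a_0,\dots,a_n)$ in $A$ the restriction $\phi\colon\langle\bar a,\Delta^n\rangle\to f^{-1}[a_0,a_n]$ is a weak homotopy equivalence, which I would prove by induction on the number of critical values lying in $[a_0,a_n]$. When this number is at most one, I compose the deformation retract of Lemma~\ref{lemma:spine} of $\langle\bar a,\Delta^n\rangle$ onto $\langle\bar a,\mathrm{sp}\Delta^n\rangle$ with that of Lemma~\ref{lemma:deformationRetract} onto a fibre $f^{-1}a$; since $\phi$ restricts to the inclusion on the $0$-simplices $f^{-1}a$, and since $f^{-1}a\hookrightarrow f^{-1}[a_0,a_n]$ is a homotopy equivalence by Lemma~\ref{lemma:oneCriticalStratified}, the map $\phi$ on $\langle\bar a,\Delta^n\rangle$ factors up to homotopy as a composite of homotopy equivalences. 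For more critical values I pick a splitting point $a_i\in\bar a$ strictly between two consecutive critical values and write $\langle\bar a,\mathrm{sp}\Delta^n\rangle$ as the pushout of $\langle(a_0,\dots,a_i),\mathrm{sp}\Delta^i\rangle$ and $\langle(a_i,\dots,a_n),\mathrm{sp}\Delta^{n-i}\rangle$ along the shared fibre $f^{-1}a_i$, matching the decomposition $f^{-1}[a_0,a_n]=f^{-1}[a_0,a_i]\cup_{f^{-1}a_i}f^{-1}[a_i,a_n]$; the gluing lemma for homotopy pushouts, fed by the inductive hypothesis on each half and the identity on $f^{-1}a_i$, then yields the claim.

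To globalize I would invoke Lemma~\ref{lemma:compactIntoCfA}: any sphere or relative homotopy into $\classifying\C_f^A$ factors through some $\langle\bar a,\Delta^n\rangle$, while any map or homotopy into $X$ has compact image inside some $f^{-1}[c,d]$, which — using the reparametrized flow of Proposition~\ref{proposition:modifiedFlowLines} to retract regions lying beyond the span of $A$ into a critical fibre — can be pushed into a region of the form $f^{-1}[a_0,a_n]$; surjectivity and injectivity of $\phi_*$ on each $\pi_k$ then follow from the local equivalence. Equivalently, $\classifying\C_f^A$ and $X$ are the filtered colimits of the $\langle\bar a,\Delta^n\rangle$ and of the $f^{-1}[c,d]$ respectively, and $\phi$ is a filtered colimit of the local weak equivalences along closed cofibrations. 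I expect the main obstacle to be the gluing step: one must check that the two decompositions are genuine \emph{homotopy} pushouts, i.e.\ that the fibre inclusions $f^{-1}a_i\hookrightarrow f^{-1}[a_i,a_n]$ and $f^{-1}a_i\hookrightarrow\langle(a_i,\dots,a_n),\mathrm{sp}\Delta^{n-i}\rangle$ are cofibrations — the collaring supplied by the flow lines of Proposition~\ref{proposition:modifiedFlowLines} should furnish the required NDR structure — and that the ensuing colimit of weak equivalences is again a weak equivalence.
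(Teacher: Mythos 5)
Your proposal is correct in substance and shares the paper's local engine, but it assembles the global statement along a genuinely different route. Both arguments run on the same four lemmas: Lemma~\ref{lemma:compactIntoCfA} to confine compact families into some~$\langle \bar{a},\Delta^n\rangle$, Lemma~\ref{lemma:spine} to retract onto the spine, Lemma~\ref{lemma:deformationRetract} to collapse onto a fibre, and Lemma~\ref{lemma:oneCriticalStratified} on the~$X$--side, with Lemma~\ref{lemma:refinement} absorbing changes of~$A$. Where you induct on the number of critical values in~$[a_0,a_n]$, gluing the closed pieces~$f^{-1}[a_0,a_i]\cup_{f^{-1}a_i}f^{-1}[a_i,a_n]$ and the matching spine decomposition along fibres and then passing to filtered colimits, the paper instead pulls back the open cover of~$\R$ by the intervals~$N_i=(c_{i-1},c_{i+1})$ between consecutive critical values, describes~$X$ and~$\classifying\C_f^A$ as homotopy colimits over these open covers (citing tom Dieck, Theorem 6.7.9), reduces at one stroke to pieces containing at most one critical value, and finishes each piece with a relative lifting-square argument rather than by exhibiting~$\phi$ as a colimit of equivalences. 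The trade-off is exactly at the obstacle you flag: the open-cover route needs no cofibrancy or NDR hypotheses on intersections and never compares~$\classifying\C_f^A$ point-set-theoretically with the colimit of the~$\langle\bar{a},\Delta^n\rangle$, whereas your closed-gluing induction owes the reader the collar verification --- though Proposition~\ref{proposition:modifiedFlowLines} does supply it at any~$a_i$ strictly between consecutive critical values, since the flow lines give a product structure~$f^{-1}(a_i-\epsilon,a_i+\epsilon)\simeq f^{-1}(a_i)\times(a_i-\epsilon,a_i+\epsilon)$, and the paper itself asserts the identical spine pushout inside the proof of Lemma~\ref{lemma:deformationRetract}, so that burden is partly shared.

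Two small cautions. First, Lemma~\ref{lemma:oneCriticalStratified} is stated for a Reeb function on its whole domain; restricting~$f$ to the closed preimage~$f^{-1}[a_0,a_n]$ makes the endpoint levels critical for the restriction (compare Example~\ref{example:existenceCritical}), so you should invoke the flow retraction constructed inside that lemma's proof (the step showing~$f^{-1}a\hookrightarrow f^{-1}[a-n,a]$ is an equivalence) rather than the lemma verbatim. Second, your preliminary enlargement of~$A$ inserts a point between every pair of consecutive critical values, which can be infinitely many additions; the colimit-of-equivalences remark you make there is genuinely load-bearing, since Lemma~\ref{lemma:refinement} only handles one point at a time --- the paper avoids this by adjoining the single needed value locally, for each piece of its cover.
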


\begin{proof}
We can essentially reduce the problem to two special cases: 1.~$f$ has no critical values and 2.~$f$ has precisely one critical value. Indeed, assume that there is at least two critical values. Enumerate them~$(c_i)$ according to the standard ordering of the real line. Every critical value~$c_i$ is contained in~$N_i=(c_{i-1},c_{i+1})$, possibly interpreting~$c_{i-1}=-\infty$ or~$c_{i+1}=\infty$. Whence we deduce open covers of~$X$ and~$\C_f^A$ by considering~$U_i= f^{-1}N_i$ and~$V_i=\bar{f}^{-1}N_i$, respectively. Do note that the only non-empty intersections are of the form~$N_{i,j}=N_i\cap N_j$ for~$j=i,i+1$. Hence it suffices to prove that~$\phi$ restricts to weak homotopy equivalences~$\phi|_{V_{i,j}}\colon V_{i,j}\rightarrow U_{i,j}$ with~$V_{i,j}=V_i\cap V_j$ and~$U_{i,j}=U_i\cap U_j$ subject to~$j=i,i+1$. This essentially follows since~$X$ and~$\classifying\C_f^A$ can be described as homotopy colimits over the given covers. See Theorem 6.7.9 in \cite{tom2008algebraic} for a precise reference.

The open cover~$N_i$ is constructed so that~$N_{i,j}=N_i\cap N_j$ contains precisely one critical value when~$i=j$, whereas it contains no critical values when~$j=i+1$. So the reduction to the above two special cases is made precise. 

Fix~$N=N_{i,j}$,~$U=U_{i,j}$ and~$V=V_{i,j}$ for~$j=i$ or~$i+1$. Also, pick a value~$a$ in~$N$. If~$i=j$, then~$a$ must be chosen as the critical value. Otherwise, it can be any real value. Lemma~\ref{lemma:refinement} allow us to assume that~$a$ is contained in~$A$ without loss of generality. We may thus assume the inclusion~$f^{-1}a\hookrightarrow U$ to factorize through~$\phi|_V$. By the two out of three property it only remains to see that~$f^{-1}a\hookrightarrow U$ and~$f^{-1}a\hookrightarrow V$ are weak homotopy equivalences. The first follows directly from Lemma~\ref{lemma:oneCriticalStratified}. 

To see that~$f^{-1}a\hookrightarrow V$ is a weak homotopy equivalence we consider an arbitrary commutative diagram
\begin{center}
		\begin{tikzcd}
			\partial\Delta^m  \arrow[d, "i"] \arrow[r, "m"]      & f^{-1}a \arrow[d] \\
			
			\Delta^m \arrow[r, "l"] & V
		\end{tikzcd}
\end{center}
and find a lift~$L\colon \Delta^m\rightarrow f^{-1}a$ up to homotopy relative to~$\partial\Delta^m$. Lemma~\ref{lemma:compactIntoCfA} tells us that the image of~$l$ is contained in a subspace~$\langle \bar{a}, \Delta^n\rangle $. The deformation retract onto~$\langle \bar{a}, \mathrm{sp}\Delta^n\rangle $, provided by Lemma~\ref{lemma:spine}, fixes~$f^{-1}a$ and hence~$l$ factorizes through~$\langle \bar{a}, \mathrm{sp}\Delta^n\rangle $, up to homotopy relative~$\partial \Delta^m$. A point in~$V$ must map into~$N$ under~$\bar{f}$, hence we can assume~$\bar{a}$ to be contained in~$N$ without loss of generality. Now Lemma~\ref{lemma:deformationRetract} applies to give a deformation retract from~$\langle \bar{a}, \mathrm{sp}\Delta^n\rangle $ to~$f^{-1}a$. But then we can homotope~$l$ to a map which factorizes through~$f^{-1}a$, up to homotopy relative to~$\partial \Delta^m$.
\end{proof}

\section{Reeb functions for homology computations}
\label{section:spectralsequence}
Section~\ref{subsec:sectionsequence} introduces the \emph{section spectral sequence} associated naturally to a continuous function~$f\colon X\rightarrow \R$. A noteworthy property of this spectral sequence is that its second page only consists of two non-trivial columns (Proposition~\ref{proposition:E2isInfty}). In Section~\ref{subsec:criticalsequence} we introduce the much smaller \emph{critical spectral sequence}. Basic computational tools are deduced and illustrated.

\subsection{The section spectral sequence}
\label{subsec:sectionsequence}
Let us fix a generalized homology theory~$k_\ast$. Recall that a simplicial space~$X_\bullet$ comes with a spectral sequence whose termination is~$k_\ast|X_\bullet|$. The cohomological version is derived in~\cite{segal1968classifying}. For every~$q$, we associate the simplicial abelian group~$k_q X_\bullet \colon \Delta^{\op} \rightarrow \Ab $ by applying~$k_q$ level-wise. Entries on the first page are given by~$\page^1_{p,q}=k_q X_p$, whereas the differential is induced by the face maps in~$k_q X_\bullet$. Indeed, a simplicial abelian group~$A_\bullet$ defines a chain complex by collapsing degeneracies entry-wise and defining the differential~$\partial=\sum_i (-1)^{i} d_i$. Entries on the second page are given by~$\page^2_{p,q}= \homology_p k_q X_\bullet$. A map~$F\colon X_\bullet\rightarrow Y_\bullet$ of simplicial spaces naturally induces homomorphisms~$k_qF_p\colon\page^1_{p,q}\rightarrow \bar{\page}^1_{p,q}$ and hence maps~$\page^2_{p,q}\rightarrow\bar{\page}^2_{p,q}$ on the second page. This does in fact define a homomorphism in the category of (homology) spectral sequences, but there is no need for further abstraction.

We have seen that a continuous function~$f\colon X\rightarrow \R$, on a topological space~$X$, defines a section category~$\S_f$ whose morphisms are the sections~$\sigma\colon [a,b]\rightarrow X$.
\begin{definition}
The \emph{section spectral sequence} of a continuous function~$f\colon X\rightarrow \R$ is the spectral sequence naturally associated to the simplicial topological space~$\nerve\S_f$.
\end{definition}

Theorem~\ref{intro:mainresult} tells us that the section spectral sequence converges to~$\homology_\ast X$ whenever~$f\colon X\rightarrow \R$ is a Reeb function:
\begin{proposition}
\label{prop:SS}
For~$f\colon X\rightarrow \R$ a Reeb function, the section spectral sequence converges to~$\homology_\ast X$:
\[
\homology_p k_q \nerve\S_f \Rightarrow k_{p+q} X.
\]
\end{proposition}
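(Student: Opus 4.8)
The plan is to derive this directly from the general machinery of the section spectral sequence together with Theorem~\ref{intro:mainresult}. Recall that for \emph{any} simplicial space $X_\bullet$ there is a spectral sequence with $\page^1_{p,q}=k_q X_p$ and $\page^2_{p,q}=\homology_p k_q X_\bullet$ that converges to $k_{p+q}|X_\bullet|$; this is the spectral sequence of~\cite{segal1968classifying} already recalled above. Applying this to the simplicial space $X_\bullet = \nerve\S_f$ gives a spectral sequence with exactly the stated $\page^2$--page, namely $\homology_p k_q \nerve\S_f$, converging to $k_{p+q}|\nerve\S_f|$.

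The only remaining point is to identify the target. Since $f$ is assumed to be a Reeb function, Theorem~\ref{intro:mainresult} furnishes a weak homotopy equivalence $X\simeq |\nerve\S_f|$. A generalized homology theory $k_\ast$ is by definition invariant under weak homotopy equivalences, so $k_{p+q}|\nerve\S_f|\cong k_{p+q}X$. Substituting this identification into the abutment of the spectral sequence yields
\[
\homology_p k_q \nerve\S_f \Rightarrow k_{p+q} X,
\]
which is precisely the assertion.

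I would organize the write-up as two short steps: first, invoke the existence of the simplicial-space spectral sequence (recalled in Section~\ref{subsec:sectionsequence}) for $\nerve\S_f$ to obtain convergence to $k_{p+q}|\nerve\S_f|$; second, apply Theorem~\ref{intro:mainresult} and homotopy invariance of $k_\ast$ to rewrite the abutment as $k_{p+q}X$. There is essentially no genuine obstacle here, since both ingredients are already established. The one thing worth a sentence of care is the convergence of the spectral sequence itself: one should remark that the standard filtration-by-skeleta argument applies, so that the abutment is indeed $k_\ast$ of the realization; but as this is the general statement recalled at the start of the section and not special to $\S_f$, it requires no new work beyond citing~\cite{segal1968classifying}. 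Thus the proposition is a formal consequence of Theorem~\ref{intro:mainresult}.
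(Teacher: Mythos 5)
Your proposal is correct and takes essentially the same route as the paper, which in fact states Proposition~\ref{prop:SS} with no separate proof at all: it is presented exactly as you argue, as the Segal spectral sequence of the simplicial space $\nerve\S_f$ (recalled at the start of Section~\ref{subsec:sectionsequence}) with abutment $k_{p+q}|\nerve\S_f|$, rewritten via Theorem~\ref{intro:mainresult} and homotopy invariance of $k_\ast$. Your extra sentence of care about convergence is reasonable and consistent with the paper, which defers this to~\cite{segal1968classifying} and to the goodness condition mentioned in Section~\ref{section:moreSimplicialStuff}.
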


The first algebraic observation is concerned with computability: there are only two non-zero columns on the second page of the section spectral sequence. In particular, the sequence collapses on the second page; all differentials on the second page are zero.

\begin{proposition}
\label{proposition:E2isInfty}
Let~$f\colon X\rightarrow \R$ be a continuous function. The section spectral sequence of~$\nerve \S_f$ satisfies~$\page_{p,q}^2=0$ for~$p\geq 2$.
\end{proposition}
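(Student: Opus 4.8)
The plan is to reduce the statement to a purely algebraic acyclicity of an explicit chain complex. First I would pass to the normalized chain complex computing~$\page^2_{p,q}=\homology_p k_q\nerve\S_f$. Using additivity of~$k_\ast$ over disjoint unions together with the description of~$p$--simplices in Example~\ref{example:nSimplexSf}, the~$p$--chains become~$\page^1_{p,q}=\bigoplus_{\bar a} k_q\S_f[\bar a]$, summed over weakly increasing sequences~$\bar a=(a_0,\dots,a_p)$. The crucial structural observation is that concatenation of sections is invertible: restricting a section over~$[a_0,a_p]$ to the subintervals recovers its decomposition, so~$\S_f[a_0,\dots,a_p]\cong\S_f[a_0,a_p]$ depends only on the two endpoints. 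After normalizing (killing degeneracies, i.e.\ discarding sequences with a repeated entry) the complex becomes
\[
N_p=\bigoplus_{a_0<\dots<a_p} k_q\S_f[a_0,a_p],
\]
in which the inner faces~$d_1,\dots,d_{p-1}$ act as the identity on the coefficient~$k_q\S_f[a_0,a_p]$ (they only forget an interior label), while the outer faces~$d_0$ and~$d_p$ are induced by restricting a section to~$[a_1,a_p]$ and~$[a_0,a_{p-1}]$, respectively.

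Next I would write~$N_\bullet$ as the filtered colimit~$\colim_F N^F_\bullet$ over finite subsets~$F\subset\R$, where~$N^F_p$ is the summand indexed by increasing sequences inside~$F$. Since homology commutes with filtered colimits, it suffices to prove~$\homology_p(N^F_\bullet)=0$ for every finite~$F$ and every~$p\geq 2$. For a fixed finite~$F$ the complex~$N^F_\bullet$ is exactly the bar-type complex of the linearly ordered set~$F$ with coefficients in the endpoint-dependent system~$[a,b]\mapsto k_q\S_f[a,b]$; conceptually this is the Baues--Wirsching complex of the free category on the quiver~$a_0\to a_1\to\cdots$, whose cohomological dimension is~$1$, and this is the real reason only two columns survive.

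Concretely I would exhibit a contracting homotopy in degrees~$\geq 2$. For a normalized generator~$(a_0<\dots<a_p)$ with coefficient~$m$, let~$h$ insert the smallest element of~$F$ lying strictly between~$a_0$ and~$a_1$ (and set~$h=0$ when no such element exists), leaving~$m$ untouched because this insertion preserves the two endpoints~$a_0,a_p$. The point of inserting an \emph{interior} label is that every face of the enlarged sequence meeting the new label is an inner face, hence the identity on coefficients, so no section of the restriction maps is required---this is precisely where a naive coning homotopy would break down. A direct check of~$\partial h+h\partial$ then shows that~$N^F_\bullet$ is chain homotopy equivalent, in degrees~$\geq 2$, to a complex concentrated in degrees~$0$ and~$1$, giving~$\homology_p(N^F_\bullet)=0$ for~$p\geq 2$.

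The main obstacle is the bookkeeping in the verification of the chain-homotopy identity: one must track the signs coming from~$\partial=\sum_i(-1)^i d_i$, treat the generators with ``no room to insert'' (where~$a_1$ is already the successor of~$a_0$ in~$F$) as the part retained by the homotopy, and confirm that the outer restriction faces cancel in pairs, using functoriality of restriction. Once this is settled, additivity and the colimit give~$\page^2_{p,q}=\homology_p k_q\nerve\S_f=0$ for all~$p\geq 2$.
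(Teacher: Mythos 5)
Your reductions are sound and take a genuinely different route from the paper: the identification $\S_f[a_0,\dots,a_p]\cong\S_f[a_0,a_p]$ (concatenation and restriction are mutually inverse), the description of inner faces as identities and outer faces as restrictions, the normalization to strictly increasing sequences, and the passage to the filtered colimit over finite $F\subset\R$ are all correct. (The paper instead argues by an element-wise rewriting procedure on cycles, repeatedly cancelling the summand with minimal label configuration against elements of $\page^1_{p+1,q}$ --- in spirit also an induction on the leftmost label, but carried out by hand.) The gap is in your final verification: the proposed $h$ is \emph{not} a contracting homotopy, and the claim that ``the outer restriction faces cancel in pairs'' is false. On a generator $x=\bigl((a_0<\dots<a_p),m\bigr)$ with $b^\ast=\min\bigl(F\cap(a_0,a_1)\bigr)$, the inner-face terms do cancel as you intend (note $\min\bigl(F\cap(a_0,a_2)\bigr)=b^\ast$, so $h(d_1x)=d_2(hx)$, and similarly $h(d_ix)=d_{i+1}(hx)$ for $2\leq i\leq p$), but both $d_0$-terms survive with the \emph{same} sign:
\[
(\partial h+h\partial)(x)=-x+d_0(hx)+h(d_0x),
\]
where $d_0(hx)=\bigl((b^\ast,a_1,\dots,a_p),\,m|_{[b^\ast,a_p]}\bigr)$ and $h(d_0x)$ is supported on sequences starting at $a_1$; these are distinct generators and nothing cancels them. (In the ``no room'' case $hx=0$ one finds instead $h(d_1x)=x$, giving the same identity with $d_0(hx)=0$.) So $h$ alone does not exhibit the claimed chain homotopy equivalence.

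The repair is short and stays entirely within your setup. Set $T=d_0h+hd_0$, so that $x=Tx-(\partial h+h\partial)(x)$ in degrees $p\geq 2$. Every generator in the support of $Tx$ has strictly larger left endpoint than $x$ (it becomes $b^\ast$ or $a_1$), while the degree is unchanged; since $F$ is finite, $T$ is nilpotent on $N^F_\bullet$ in each degree. Given a cycle $z$ in degree $p\geq 2$, the identity gives $z=Tz-\partial(hz)$, so $Tz$ is again a cycle homologous to $z$, and iterating yields $z\sim T^kz=0$ for $k$ large. Hence $\homology_p(N^F_\bullet)=0$ for $p\geq 2$, and the colimit over finite $F$ finishes the proof as you describe. (Your Baues--Wirsching aside is fine as motivation but is not load-bearing and need not be made precise.) With this one fix your argument is complete, and arguably easier to verify line-by-line than the paper's rewriting procedure.
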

\begin{proof}
The additivity axiom of~$k_\ast$ implies
\[
E_{p,q}^1=k_q\Big(\coprod_{a_0< \dots < a_o} \S_f(a_0,\dots a_p)\Big)\simeq \bigoplus_{a_0< \dots < a_p} k_q\S_f(a_0,\dots,a_p).
\]
For an arbitrary~$q$ we fix a~$p\geq 2$ and denote by~$\partial_p\colon \page^1_{p,q}\rightarrow \page^1_{p-1,q}$ the boundary map. On the level of elements, an element~$\alpha$ in~$k_q\S_f[a_0,\dots,a_p]$ is mapped to the alternating sum~$\sum (-1)^{i} d_i \alpha$ with~$d_i\alpha$ an element of~$k_q\S_f[a_0,\dots,\hat{a}_i,\dots,a_p]$. We shall see that the kernel of~$\partial_p$ is contained in the image of~$\partial_{p+1}$, thus justifying the assertion.

An arbitrary element~$\alpha$ in the kernel of~$\partial_p$ can be presented as a linear combination~$c_1\alpha_1+\dots+c_n\alpha_n$ where each~$\alpha_i$ is in some~$k_q\S_f[a_{i,0},\dots,a_{i,p}]$. We proceed by defining a process which splits~$\alpha$ into a sum~$\alpha'+\partial_{p+1} \beta'$ and argue why finitely many iterations must eventually lead to~$\alpha=\partial_{p+1} \beta$ for some~$\beta$ in~$k_q(\nerve\S_f)_{p+1}$.

Start by picking the~$\alpha_i$ with smallest possible~$a_{i,0}$ and recursively smallest possible~$a_{i,j+1}$ subject to smallest possible~$a_{i,j}$. Since~$\alpha$ is assumed to be in the kernel of~$\partial_p$, there must be a~$j$ and~$n$ such that~$d_n\alpha_j$ cancels~$d_q\alpha_i$. But~$\alpha_i$ was chosen so that~$n$ must necessarily equal~$q$--otherwise we would have picked~$\alpha_j$ in place of~$\alpha_i$. There are now two cases:

1.The class~$\alpha_j$ restricts to~$\alpha_i$. That is, we can pick~$j$ such that there is a class~$\beta$ in~$k_q\S_f[a_{i,0},\dots,a_{i,p}, a_{j,p}]$ satisfying~$d_{p+1}\beta=\alpha_i$ and~$d_p \beta = \alpha_j$. If so, replace~$(-1)^p(\alpha_i-\alpha_j)$ with~$\partial_{p+1}\beta-\sum_{k\neq p,p+1}d_k\beta$ in the formula~$c_1\alpha_1+\dots+c_n\alpha_n$ to obtain a new formula~$\alpha=c_1'\alpha_1'+\dots+c_m'\alpha_m'+\partial_{p+1} \beta$. Now there is one less summand~$\alpha_k'$ with the minimal configuration of~$\alpha_i$.

2. Otherwise, if there is no such~$j$, we rather apply~$d_{p-1}$ to~$\alpha_i$. This produces an element~$d_{p-1}\alpha_i$ contained in~$k_q\S_f[a_{i,0},\dots,\hat{a}_{i,p-1},a_{i,p}]$. There must be a~$k$ and~$m$ such that~$d_m \alpha_k$ cancels~$d_{q-1}\alpha_i$. As in 1. the minimality of~$\alpha_i$ implies that~$m=p-1$. Since~$p\geq 2$ we can lift~$\alpha_i$ to~$k_q\S_f[a_{i,0},\dots,a_{i,p-1},a_{k,p-1},a_{i,p}]$ by adding the distinct label from~$\alpha_k$. Denote such a lift~$\beta$. This element satisfies~$d_{p-1}\beta=\alpha_k$ and~$d_p\beta = \alpha_i$. Moreover, among the~$d_l\beta$ for~$l\neq p-1,p$, only~$d_{q+1}\beta$ has a smaller configuration than~$\alpha_i$. As in~$1.$ we replace~$(-1)^p(\alpha_i-\alpha_k)$ with~$\partial_{p+1}\beta-\sum_{l\neq p-1,p} d_l\beta$, yielding a new linear combination~$\alpha=c_1'\alpha_1'+\dots+c_m'\alpha_m'+\partial_q \beta$. We did not manage to reduce the number of minimal configurations, but rather replaced a minimal configuration by a smaller minimal configuration.

Repeating this process iteratively must terminate. Indeed, case 2. can only repeat finitely many times as there are only finitely many summands in~$\alpha$. So we have successfully constructed an iterative process that reduces the number of summands with a minimal configuration after finitely many steps.
\end{proof}

With our choice of indexing, a homology spectral sequence with only two adjacent non-zero columns is a collection of short exact sequences. I refer to~\cite[p. 124]{weibel1995introduction} for details.
\begin{corollary}
\label{cor:SES}
Any continuous function~$f\colon X\rightarrow \R$ determines short exact sequences
\[
H_0 k_q (\nerve\S_f)_0\hookrightarrow k_q \classifying \S_f \twoheadrightarrow H_1 k_{q-1}(\nerve\S_f)_1
\]
for all~$q\geq 1$.
\end{corollary}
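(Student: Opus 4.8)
The plan is to obtain the short exact sequences as the edge data of the section spectral sequence, once Proposition~\ref{proposition:E2isInfty} has forced it into a two-column shape. Although Proposition~\ref{prop:SS} identifies the abutment with~$\homology_\ast X$ only in the Reeb case, for a completely arbitrary continuous~$f$ the spectral sequence of the simplicial space~$\nerve\S_f$ still converges to~$k_\ast|\nerve\S_f| = k_\ast\classifying\S_f$, and it is this abutment that appears in the statement.

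First I would record what Proposition~\ref{proposition:E2isInfty} gives: the second page~$\page^2_{p,q} = \homology_p k_q\nerve\S_f$ vanishes for~$p \geq 2$ (and trivially for~$p < 0$), so it is supported on the two adjacent columns~$p = 0$ and~$p = 1$. Next I would check that the sequence degenerates there, $\page^2 = \page^\infty$. This is forced purely by the column constraint: a higher differential~$d^r\colon\page^r_{p,q}\to\page^r_{p-r,q+r-1}$ with~$r\geq 2$ has a nonzero source only when~$p\in\{0,1\}$, in which case its target lies in column~$p - r \leq -1$ and hence vanishes; symmetrically its target can be nonzero only when~$p - r\in\{0,1\}$, forcing~$p\geq 2$ and a vanishing source. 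So all differentials from the second page onward are zero.

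Then I would unwind convergence in a fixed total degree~$q$. The abutment~$k_q\classifying\S_f$ carries a finite filtration whose associated graded pieces are the terms~$\page^\infty_{p,q-p}$; only~$p = 0$ and~$p = 1$ survive, so this is a two-step filtration~$0\subseteq F_0\subseteq F_1 = k_q\classifying\S_f$ with~$F_0\cong\page^\infty_{0,q}$ and~$F_1/F_0\cong\page^\infty_{1,q-1}$. This is exactly the short exact sequence
\[
0\to\page^\infty_{0,q}\to k_q\classifying\S_f\to\page^\infty_{1,q-1}\to 0,
\]
and substituting~$\page^\infty_{0,q} = \homology_0 k_q\nerve\S_f$ (the column-$0$ term) and~$\page^\infty_{1,q-1} = \homology_1 k_{q-1}\nerve\S_f$ (the column-$1$ term) reproduces the asserted sequence; the hypothesis~$q\geq 1$ is what keeps the right-hand homological index~$q-1$ nonnegative.

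I do not anticipate a real obstacle: this is the routine repackaging of a two-column homology spectral sequence as a family of extensions, as in~\cite[p.~124]{weibel1995introduction}. The only care needed is bookkeeping --- orienting the filtration so that the column-$0$ term~$\homology_0 k_q\nerve\S_f$ is the subobject (mono) and the column-$1$ term~$\homology_1 k_{q-1}\nerve\S_f$ is the quotient (epi), rather than the other way around, and remembering that for general~$f$ the abutment is~$k_\ast\classifying\S_f$ and not~$k_\ast X$.
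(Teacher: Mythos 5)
Your proposal is correct and takes essentially the same route as the paper: the paper deduces the corollary from Proposition~\ref{proposition:E2isInfty} by invoking the standard fact (citing Weibel, p.~124) that a homology spectral sequence supported on two adjacent columns degenerates at~$\page^2$ and yields short exact sequences, which is precisely the degeneration-and-filtration argument you spell out. Your bookkeeping --- the abutment being~$k_\ast\classifying\S_f$ rather than~$k_\ast X$ for a general continuous~$f$, and the column-$0$ term appearing as the subobject --- agrees with the paper's setup.
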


An~$f$--path is a path~$p\colon I\rightarrow X$ in~$X$ which reparametrizes to a concatenation of paths contained in fibers of~$f$ and possibly reversed sections of~$f$. Define the relation~$\sim_f$ on~$X$ by declaring that points are equivalent if they can be connected by an~$f$--path.
\begin{proposition}
\label{prop:BSfZeroHomology}
For~$f\colon X\rightarrow \R$ a continuous function, the abelian group~$\homology_0\classifying \S_f$ is the free abelian group on~$X/ \sim_f$.
\end{proposition}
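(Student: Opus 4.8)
The plan is to reduce everything to a computation of path components. Since for any space~$Y$ the group~$\homology_0 Y$ is free abelian on the set~$\pi_0 Y$ of path components, it suffices to exhibit a natural bijection~$\pi_0\classifying\S_f\cong X/\sim_f$; the asserted description of~$\homology_0\classifying\S_f$ then follows at once. So from here on I would forget homology and concentrate on identifying the path components of~$\classifying\S_f=|\nerve\S_f|$.

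First I would argue that~$\pi_0\classifying\S_f$ is controlled entirely by the~$1$--skeleton. Attaching a cell~$\{\bar\sigma\}\times\Delta^n$ with~$n\geq 2$ does not change~$\pi_0$, because~$\partial\Delta^n$ is connected for~$n\geq 2$, so each such cell maps into a single path component of the lower skeleton and neither creates nor merges components. Hence~$\pi_0\classifying\S_f=\pi_0\sk_1\classifying\S_f$. The~$1$--skeleton is the quotient of~$\ob\S_f\sqcup(\mor\S_f\times\Delta^1)$ obtained by gluing the two ends of each edge~$\sigma\times\Delta^1$ to~$s(\sigma)$ and~$t(\sigma)$. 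A path component of~$\mor\S_f$ sweeps out a connected subset of this quotient meeting~$\ob\S_f$ in the components of its source and target endpoints, so that~$\pi_0\sk_1\classifying\S_f$ is exactly the coequalizer~$\coeq(\pi_0\mor\S_f\rightrightarrows\pi_0\ob\S_f)$ of the source and target maps. In other words,~$\pi_0\classifying\S_f$ is~$\pi_0\ob\S_f$ modulo the relation~$[s\sigma]\sim[t\sigma]$ generated by morphisms.

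It then remains to identify this coequalizer with~$X/\sim_f$, and here the key point is the coproduct topology on objects. Because~$\ob\S_f=\coprod_{a}f^{-1}a$ carries the disjoint union topology, a path in the object space never leaves a single fibre, so~$\pi_0\ob\S_f=\coprod_a\pi_0 f^{-1}a$ and the quotient map~$X\to\pi_0\ob\S_f$ identifies precisely those points joined by a path inside a common fibre. The further relation~$[s\sigma]\sim[t\sigma]$ identifies, for each section~$\sigma\colon[a,b]\to X$, the classes of~$\sigma(a)$ and~$\sigma(b)$. Thus the equivalence relation presenting the coequalizer is generated on~$X$ by (i) within--fibre paths and (ii) endpoints of sections (hence also of reversed sections, by symmetry)---which are exactly the two kinds of pieces out of which an~$f$--path is concatenated. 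Consequently the coequalizer relation and~$\sim_f$ coincide, giving~$\pi_0\classifying\S_f\cong X/\sim_f$ and the result. To close the loop I would also record surjectivity directly: every point~$[\bar\sigma,\bar t]$ is joined within its own simplex to a vertex, hence to a point of~$X$.

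The main obstacle I anticipate is the middle step: justifying the coequalizer formula for the realization of a \emph{simplicial space} (rather than a simplicial set) with enough point--set care. One must be sure that passing to~$\sk_1$ loses no~$\pi_0$ information, that the coproduct topology on~$\ob\S_f$ genuinely makes within--fibre path components the atoms of~$\pi_0\ob\S_f$, and that a path component of the (possibly enormous) morphism space~$\mor\S_f=\coprod_{a\leq b}\S_f[a,b]$ contributes a single gluing relation between two object components. Once the coequalizer description is secured, the matching with~$\sim_f$ in the previous paragraph is a routine bookkeeping of generators and needs no further analysis; this is also why no Reeb hypothesis enters, the statement holding for arbitrary continuous~$f$.
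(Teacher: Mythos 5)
Your reduction to $\pi_0$ and your final bookkeeping are fine, but the pivotal middle claim---that $\pi_0\classifying\S_f$ ``is exactly'' the coequalizer $\coeq(\pi_0\mor\S_f\rightrightarrows\pi_0\ob\S_f)$---is a genuine gap, and it is the very step you flag and then defer. Your sweeping argument only shows that the coequalizer relation holds and that the comparison map is surjective; injectivity can actually \emph{fail} for realizations of simplicial spaces, so this is not routine point--set care. Concretely: take $Y_0=\{0\}\cup\{1/n\colon n\geq 1\}\subset\R$ and let $Y_1$ consist of the degenerate edges together with a sequence of edges $e_n$ from $1/n$ to $1/(n+1)$ converging in $Y_1$ to the degenerate edge $s_0(0)$ (source and target are then continuous). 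In the realization the cylinder over $s_0(0)$ is collapsed to the vertex $0$, so by a tube--lemma argument every open neighborhood of that vertex contains the \emph{whole} cylinders $\{e_n\}\times\Delta^1$ for $n$ large; hence the path traversing $e_1,e_2,\dots$ on $[1-1/n,1-1/(n+1)]$ and sending $1$ to the vertex $0$ is continuous, and $0$ is joined to $1$ in the realization even though the coequalizer keeps $\{0\}$ as a separate class. So your formula needs an input special to~$\nerve\S_f$: each level is a coproduct over label sequences $\bar a$, a piece $\S_f[a_0,\dots,a_n]$ is entirely degenerate when $\bar a$ has repeats and contains no degenerate simplices when $\bar a$ is strictly increasing, so the degenerate simplices form a clopen subset of each level and the limit--edge pathology above is blocked; one must then argue that a path converging into a vertex can meet only finitely many nondegenerate cylinders near its endpoint. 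This is essentially the ``goodness'' the paper alludes to, and proving it is real work that your proposal does not contain.

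The paper sidesteps all of this by staying homological: it reads off $\homology_0\classifying\S_f=\page^2_{0,0}=\homology_0\homology_0\nerve\S_f$ from the section spectral sequence, where the inner $\homology_0$ gives the free abelian group on fiberwise path components and the outer one quotients by $d_0-d_1$, i.e.\ by the section relation; the identification of the result with the free abelian group on $X/\sim_f$ is the same generator bookkeeping you perform at the end. Note the amusing converse: granting the spectral sequence, the paper's computation together with your observation that $\homology_0$ is free on $\pi_0$ \emph{implies} your coequalizer formula for these particular simplicial spaces. As a standalone argument, though, the cleanest repair is either to run the $\page^2_{0,0}$ computation as the paper does, or to carry out the clopen--pieces analysis sketched above.
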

\begin{proof}
The assertion is a direct consequence of~$\homology_0 \classifying \S_f=\page^2_{0,0}$. Indeed,~$\page^2_{0,0}=\homology_0\homology_0 \nerve \S_f$ where the first application of~$\homology_0$ identifies points in the same path components of fibers, whereas the second identifies points that can be connected via sections.
\end{proof}

The following calculation justifies Example~\ref{example:BsfNotX}.
\begin{example}
\label{ex:oscillation}
Consider the continuous function~$f\colon I\rightarrow \R$,~$f(x)=x\sin (\frac{1}{x})$ . We shall verify that the homology of~$\classifying \S_f$ with coefficients in~$\Z$ is
\[
\homology_n\classifying \S_f = \twopartdef{\Z^2}{n=0}{0}{n\neq 0}
\]

It follows from Proposition~\ref{proposition:E2isInfty} that the associated spectral sequence has~$\page^2_{p,q}=0$ whenever~$p\geq 2$. Moreover, all of  the section spaces~$\S_f[a_0,\dots,a_n]$ are discrete except for~$\S_f[0]=f^{-1}0$ which is homeomorphic to the subspace~$\{\frac{1}{n}|n=1,2,\dots \}\cup 0$ of~$\R$. Nonetheless, the topological space~$\S_f[0]$ is homotopically discrete as it is weakly equivalent to the natural numbers equipped with the discrete topology. We thus conclude that~$\page^1_{p,q}=0$ for~$q\geq 1$. So in light of Proposition~\ref{proposition:E2isInfty} it only remains to calculate~$\page^2_{0,0}$ and~$\page^2_{1,0}$.

Let~$\partial_1\colon \page^1_{1,0}\rightarrow \page^1_{0,0}$ be the remaining non-zero differential on the first page and pick a linear combination~$c_1\sigma_1+\dots+c_n\sigma_n$ in its kernel. Let~$m_i$ denote the minimum of~$\sigma_i$, a section into~$I$, and consider~$m$ the smallest number among the~$m_i$. Observe how every~$\sigma_i$ maps into~$I_m=[m,1]$. The inclusion~$j\colon I_m\hookrightarrow I$ satisfies~$f|_{I_m}=f\circ j$ and so there is an induced map~$\bar{\page}^\ast_{p,q}\rightarrow \page^\ast_{p,q}$, between associated spectral sequences. Theorem~\ref{intro:mainresult} applies to~$f_{I_m}$ so that~$\homology_1 I_m=0$ implies~$\bar{\page}^2_{0,1}=0$. Hence~$c_1\sigma_1+\dots+c_n\sigma_n$ is in the image of~$\bar{\partial}_1\colon \bar{\page}^1_{2,0}\rightarrow \bar{\page}^1_{1,0}$, but then it is also in the image of~$\partial_2\colon \page^1_{2,0}\rightarrow \page^1_{1,0}$. The isomorphism~$\page^2_{1,0}\simeq0$ thus follows.

Proposition~\ref{prop:BSfZeroHomology} implies~$\page^2_{0,0}\simeq \Z^2$: All pairs in~$(0,1]$ can be connected by a finite zigzag of sections, whereas no section starts nor terminates in~$0$. Hence~$\homology_0\classifying\S_f\simeq \Z^2$.
\end{example}
\subsection{The critical spectral sequence}
\label{subsec:criticalsequence}
For a continuous function~$f\colon X\rightarrow \R$, the associated section spectral sequence
\[
\homology_qk_p \nerve \S_f \Rightarrow \homology_{p+q} \classifying \S_f
\]
collapses on the second page which only consists of two non-zero columns. Alas, it has a huge problem when it comes to computability. Computing~$\homology_p k_q \nerve \S_f$ abouts to an uncountable number of homology computations. For we would have to determine~$k_q\S_f[a,b]$ for all real numbers~$a\leq b$. But if~$f\colon X \rightarrow \R$ is a Reeb function, we shall see that the complexity is drastically reduced.

Recall from Section~\ref{subsec:sf} that for every non-empty subset~$A$ of~$\R$ which contains the critical values of~$f$, there is the subcategory~$\C_f^A$ of sections whose source and target are both contained in~$A$. From here on we assume~$f$ to have at least one critical value and introduce the \emph{critical category}~$\C_f=\C_f^{\{ \text{critical values}\}}$. 
\begin{definition}
\label{def:criticalSS}
The \emph{critical spectral sequence} of a Reeb function~$f\colon X \rightarrow \R$ is the spectral sequence naturally associated to~$\nerve\C_f$.
\end{definition}

Theorem~\ref{thm:mainresult} tells us that the critical spectral sequence converges to~$\homology_\ast X$.
\begin{proposition}
\label{prop:computability}
For~$f\colon X \rightarrow \R$ a Reeb function, the critical spectral sequence converges to~$k_\ast X$: 
\[
\homology_qk_p \nerve \C_f \Rightarrow k_{p+q} X.
\]
\end{proposition}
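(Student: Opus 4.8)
The plan is to establish convergence by combining the general spectral sequence machinery from Section~\ref{subsec:sectionsequence} with the homotopy equivalence from Theorem~\ref{thm:mainresult}. The critical spectral sequence is, by Definition~\ref{def:criticalSS}, the spectral sequence naturally associated to the simplicial space~$\nerve\C_f$ via the construction recalled at the start of Section~\ref{subsec:sectionsequence}. That construction tells us that for any simplicial space~$X_\bullet$ the associated spectral sequence has first page~$\page^1_{p,q}=k_qX_p$, second page~$\page^2_{p,q}=\homology_p k_q X_\bullet$, and converges to~$k_\ast|X_\bullet|$. Applying this to~$X_\bullet=\nerve\C_f$ immediately yields a spectral sequence with~$\page^2_{p,q}=\homology_p k_q\nerve\C_f$ abutting to~$k_{\ast}|\nerve\C_f|=k_\ast\classifying\C_f$.

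The only remaining task is to identify the target~$k_\ast\classifying\C_f$ with~$k_\ast X$. Here I would invoke Theorem~\ref{thm:mainresult} directly. Recall that~$\C_f$ is precisely~$\C_f^A$ for the choice~$A=\{\text{critical values}\}$, which is a non-empty subset of~$\R$ containing the critical values of~$f$ (non-empty because we have assumed~$f$ has at least one critical value). Hence Theorem~\ref{thm:mainresult} provides a weak homotopy equivalence~$\phi\colon\classifying\C_f\rightarrow X$. Since a generalized homology theory~$k_\ast$ is by definition invariant under weak homotopy equivalences, this gives an isomorphism~$k_\ast\classifying\C_f\cong k_\ast X$, and the convergence statement becomes
\[
\homology_q k_p\nerve\C_f\Rightarrow k_{p+q}X,
\]
matching the proposition (up to the harmless reindexing between~$(p,q)$ labels already present in the analogous Proposition~\ref{prop:SS}).

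I do not anticipate any genuine obstacle: all the substantive work has been done in Theorem~\ref{thm:mainresult} and in the general setup of the section spectral sequence. The proof is essentially a one-line appeal to these two ingredients, exactly parallel to how Proposition~\ref{prop:SS} is deduced from Theorem~\ref{intro:mainresult} for the full section category. The single point worth stating explicitly is that~$A=\{\text{critical values}\}$ satisfies the hypotheses of Theorem~\ref{thm:mainresult}, so the hypothesis that~$f$ has at least one critical value (guaranteed in general by the refinement trick of Example~\ref{example:existenceCritical}) is what makes~$\C_f$ a legitimate instance of~$\C_f^A$.
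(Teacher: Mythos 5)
Your proposal is correct and follows exactly the route the paper takes: the paper states Proposition~\ref{prop:computability} as an immediate consequence of the general simplicial-space spectral sequence (converging to~$k_\ast\classifying\C_f$) combined with Theorem~\ref{thm:mainresult} applied to~$A=\{\text{critical values}\}$, which is legitimate precisely because~$f$ is assumed to have at least one critical value (Example~\ref{example:existenceCritical}). Your explicit remarks on the non-emptiness of~$A$ and the index conventions are the same points the paper leaves implicit, so there is nothing to add.
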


As opposed to the section spectral sequence, we need only compute the generalized homology groups of~$\S_f[c_0,c_1]$ whenever~$c_0<c_1$ are critical values of~$f$. If for example~$X$ is compact, this reduces the number of~$q$th generalized homology groups needed to compute from uncountable to finite. We shall reduce the complexity even further: it suffices to compute the generalized homology groups of~$\S_f[c_0,c_1]$ whenever~$c_0<c_1$ are \emph{successive} critical values. Let us introduce some convenient notation before stating the formal result.

I remind that~$\page^1_{p,q}\simeq \oplus k_q\S_f[c_0,c_1]$ ranging over all critical values~$c_0< c_1$. For every~$q\geq 0$, the differential~$\partial^1_{1,q}\colon  \page^1_{1,q}\rightarrow \page^1_{0,q}$ restricts to a morphism
\[
\partial^s_{1,q}\colon \bigoplus k_q\S_f[c_0,c_1] \rightarrow \bigoplus k_q\S_f[c].
\]
only ranging over successive pairs of critical values~$c_0<c_1$. 
\begin{proposition}
\label{prop:easyToCompute}
For~$f\colon X \rightarrow \R$ a Reeb function, the second page of the associated critical spectral sequence satisfies~$\page^2_{1,q}\simeq \ker \partial^s_{1,q}$
for all~$q\geq 2$.
\end{proposition}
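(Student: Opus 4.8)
The plan is to compare the full normalized complex $(\page^1_{\bullet,q},\partial^1_{\bullet,q})$ with the two-term complex supported on successive pairs and to show the two agree in homological position~$1$. Write $\partial^1_{1,q}=t_\ast-s_\ast$, where $s_\ast,t_\ast$ are induced by restricting a section to its endpoints, so that $\partial^1_{1,q}$ carries the summand $k_q\S_f[c_0,c_1]$ into $k_qf^{-1}(c_0)\oplus k_qf^{-1}(c_1)$; by construction $\partial^s_{1,q}$ is the restriction of $\partial^1_{1,q}$ to the summand indexed by successive pairs, and $\page^2_{1,q}=\ker\partial^1_{1,q}/\operatorname{im}\partial^1_{2,q}$. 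Since $\ker\partial^s_{1,q}=\ker\partial^1_{1,q}\cap\big(\text{successive summand}\big)$, there is a tautological inclusion $\iota\colon\ker\partial^s_{1,q}\to\ker\partial^1_{1,q}$, and the whole task is to prove that the induced map $\ker\partial^s_{1,q}\to\page^2_{1,q}$ is an isomorphism.

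For surjectivity the key elementary observation is that, for critical values $c_0<e<c_1$, the inner face $d_1\colon\S_f[c_0,e,c_1]\to\S_f[c_0,c_1]$ is a homeomorphism, since a section over $[c_0,c_1]$ is exactly a composable pair split at $e$. Hence $(d_1)_\ast$ is invertible, and for any class $\alpha$ in $k_q\S_f[c_0,c_1]$ the element $\beta=(d_1)_\ast^{-1}\alpha$ satisfies $\partial^1_{2,q}\beta=d_0\beta-\alpha+d_2\beta$, so that $\alpha\equiv d_0\beta+d_2\beta$ modulo $\operatorname{im}\partial^1_{2,q}$, with the two new terms supported on the strictly shorter intervals $[e,c_1]$ and $[c_0,e]$. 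Because the critical values are discrete, any compact interval meets only finitely many of them; choosing $e$ to be the least interior critical value at each stage and iterating therefore terminates and rewrites an arbitrary cycle of $\partial^1_{1,q}$, modulo $\operatorname{im}\partial^1_{2,q}$, as a cycle supported on successive pairs, that is, as an element of $\ker\partial^s_{1,q}$.

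For injectivity I would build an explicit chain retraction onto the successive complex. Writing $c_0=e_0<e_1<\dots<e_m=c_1$ for all the critical values in a pair $[c_0,c_1]$, the splitting homeomorphism $\S_f[c_0,c_1]\cong\S_f[e_0,e_1]\times_{f^{-1}(e_1)}\cdots\times_{f^{-1}(e_{m-1})}\S_f[e_{m-1},e_m]$ lets me define $r\colon\page^1_{1,q}\to\bigoplus_{\mathrm{succ}}k_q\S_f[c,c']$ summand-wise as the sum of the $m$ restriction maps to the successive subintervals $[e_i,e_{i+1}]$; on successive pairs $r$ is the identity. A telescoping cancellation of interior endpoints gives $\partial^s_{1,q}\circ r=\partial^1_{1,q}$, so that $r$, together with the identity on $\page^1_{0,q}$, is a chain map from $(\page^1_{\bullet,q},\partial^1_{\bullet,q})$ to the truncated complex $[\bigoplus_{\mathrm{succ}}k_q\S_f[c,c']\xrightarrow{\partial^s_{1,q}}\page^1_{0,q}]$ concentrated in degrees $1$ and $0$. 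Crucially $r$ annihilates $\operatorname{im}\partial^1_{2,q}$: for a triple $c_0<c_1<c_2$ the interior critical values of $[c_0,c_2]$ are those of $[c_0,c_1]$, then $c_1$, then those of $[c_1,c_2]$, and on each successive subinterval the restriction of the concatenation $d_1\gamma$ agrees with that of $d_2\gamma$ or of $d_0\gamma$; hence $r(d_1\gamma)=r(d_2\gamma)+r(d_0\gamma)$ and $r(\partial^1_{2,q}\gamma)=r(d_0\gamma)-r(d_1\gamma)+r(d_2\gamma)=0$. Thus $r$ descends to a map $\page^2_{1,q}\to\ker\partial^s_{1,q}$ splitting $\iota$, which forces $\iota$ to be injective; together with surjectivity this yields the asserted isomorphism.

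The main obstacle is the verification that $r$ is a well-defined chain map killing $\operatorname{im}\partial^1_{2,q}$: everything rests on the naturality of ``restrict a section to a subinterval'' with respect to concatenation, that is, on the restriction-agreement identity $r(d_1\gamma)=r(d_2\gamma)+r(d_0\gamma)$, which in turn depends on the homeomorphism presenting $\S_f[c_0,\dots,c_p]$ as an iterated fibre product over the intermediate fibres (Example~\ref{example:nSimplexSf}). I would also organise the surjectivity induction by a genuinely decreasing invariant, for instance the total number of interior critical values over the support of a representative, which is finite by discreteness. Finally I note that the construction is uniform in $q$, the degree entering only through the levelwise functor $k_q$, so that it covers the stated range $q\ge 2$.
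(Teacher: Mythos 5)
Your proof is correct, and its surjectivity half coincides with the paper's: both iterate the relation~$[d_1\beta]=[d_0\beta]+[d_2\beta]$ to split a class on~$[c_0,c_1]$ at interior critical values, with discreteness of the critical set guaranteeing termination. Your explicit remark that the inner face~$d_1\colon \S_f[c_0,e,c_1]\rightarrow \S_f[c_0,c_1]$ is a homeomorphism (restriction and concatenation being mutually inverse continuous maps) usefully makes precise the lift~$\beta=(d_1)_\ast^{-1}\alpha$ that the paper takes for granted when it ``applies the relation~$n$ times.'' Where you genuinely diverge is injectivity. The paper disposes of it in one sentence---the successive generators ``cannot be decomposed further,'' so the relations imposed by~$\partial^1_{2,q}$ are claimed to be trivial on them---which is an assertion of minimality rather than an argument. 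You instead construct the chain retraction~$r$ that restricts each class to its successive subintervals, verify the telescoping identity~$\partial^s_{1,q}\circ r=\partial^1_{1,q}$ and the annihilation~$r\circ \partial^1_{2,q}=0$ via the compatibility of restriction with concatenation, and conclude that the induced map~$\bar r$ splits the tautological map~$\ker\partial^s_{1,q}\rightarrow \page^2_{1,q}$. This buys a complete and checkable injectivity proof where the paper's is elliptical, exhibits the isomorphism canonically (induced by honest chain maps), and, as you note, works uniformly in all degrees~$q\geq 0$, not merely the stated range~$q\geq 2$. The only point worth spelling out in your write-up is that when a cycle is supported on several non-successive summands you modify each summand separately by boundaries, and the resulting representative is still a cycle because~$\partial^1_{1,q}\circ\partial^1_{2,q}=0$; this is immediate, but it is the step that turns your summand-wise rewriting into a statement about~$\ker\partial^1_{1,q}$.
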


\begin{proof}
Let~$\beta$ be an element in~$\page^1_{2,q}$. The relation imposed by~$\partial^1_{2,q}$ on~$\page^2_{1,q}$ is determined by the equation~$\partial^1_{2,q}\beta=d_0\beta-d_1\beta+d_2\beta$ and implies that~$[d_1\beta]=[d_0\beta]+[d_2\beta]$. If~$\beta$ is in~$k_q\S_f[c_0,c_1,c_2]$, then~$d_0\beta$ is in~$k_q \S_f[c_1,c_2]$ etc. So the map~$d_i$ simply forgets the~$i$th label. 

For a~$[\alpha]$ in~$\page^2_{1,q}$ represented by~$\alpha$ in~$k_q \S_f[c_0,c_1]$, we list all intermediate critical values~$d_0<\dots < d_n$ with~$d_0=c_0$ and~$d_n=c_1$. Now it is only a matter of applying the relation imposed by~$\partial^1_{2,q}$~$n$ times to rewrite~$[\alpha]$ as a linear combination~$[\alpha_1]+\dots+[\alpha_n]$ with~$\alpha_i$ in~$k_q \S_f[d_{i-1},d_i]$. Hence the elements in~$\ker \partial^s_{1,q}$ generates~$\page^2_{1,q}$. Moreover, the relation induced by~$\partial^1_{2,q}$ is trivial on this set of generators, for they cannot be decomposed further.
\end{proof}

As a last computational tool, we recognize the homotopy type of section spaces decorated by successive critical values. These section spaces has the homotopy type of any intermediate fiber. Hence the critical sequence recovers the homology of~$X$ using the homology type of certain fibers. I refer to Example~\ref{example:torus} for a hands-on demonstration.
\begin{proposition}
\label{prop:finite}
Consider~$c$ and~$d$ two successive critical values of~$f$ as well as a real number~$a$ in~$(c,d)$. The evaluation map~$\ev_a\colon\S_f[c,d]\rightarrow f^{-1}a$ is a homotopy equivalence.
\end{proposition}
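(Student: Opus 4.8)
The plan is to exhibit an explicit homotopy inverse to $\ev_a$ built from the reparametrized flow-lines of Proposition~\ref{proposition:modifiedFlowLines}. Define $s\colon f^{-1}a\rightarrow \S_f[c,d]$ by $s(x)=g_x$, the section through $x$; since $f(x)=a$, property ii) gives $g_x(a)=x$, so that $\ev_a\circ s(x)=g_x(a)=x$ and $s$ is a genuine section of $\ev_a$. Continuity of $s$ is not new information: it is precisely the continuity of the extension map $e\colon \mathrm{Flow}_f(c,d)\rightarrow \S_f[c,d]$ established inside the proof of Proposition~\ref{proposition:modifiedFlowLinesManifold}, restricted along the homeomorphism $f^{-1}a\cong \mathrm{Flow}_f(c,d)$ that records where a flow-line crosses the regular level~$a$.

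It then remains to produce a homotopy $H\colon \S_f[c,d]\times I\rightarrow \S_f[c,d]$ from the identity to $s\circ \ev_a$. Mimicking the straight-line reparametrization of Lemma~\ref{lemma:deformationRetract}, I set
\[
H(\sigma,t)(b)=g_{\sigma((1-t)b+ta)}(b)\quad\text{for }t>0,\qquad H(\sigma,0)=\sigma.
\]
For $t>0$ the selection level $(1-t)b+ta$ is a strict convex combination involving $a\in(c,d)$, hence lands in $(c,d)$ for every $b\in[c,d]$, so the flow-line $g_{\sigma((1-t)b+ta)}$ is defined; moreover $f(H(\sigma,t)(b))=b$, so each $H(\sigma,t)$ is again a section. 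The definition gives $H(\sigma,0)=\sigma$ and $H(\sigma,1)=g_{\sigma(a)}=s\circ \ev_a(\sigma)$, while if $\sigma=g_x$ is already a flow-line then $\sigma((1-t)b+ta)$ lies on that same flow-line, whence $g_{\sigma((1-t)b+ta)}=g_x$ and $H(g_x,t)=g_x$ is constant in $t$. Thus $H$ is a deformation retraction of $\S_f[c,d]$ onto the image of~$s$, and combined with $\ev_a\circ s=\id$ this proves that $\ev_a$ is a homotopy equivalence.

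The one genuine difficulty is the continuity of $H$. On $\S_f[c,d]\times(0,1]$ it is immediate from the continuity of $g$ together with that of evaluation, and for interior levels $b\in(c,d)$ the limit $t\to 0^+$ is governed by continuity of $g$ at $(b,\sigma(b))$. The delicate points are $(b,t)\to(c,0)$ and $(b,t)\to(d,0)$, where the flow $g$ is undefined on the boundary fibers $f^{-1}c,f^{-1}d$: writing $s_\lambda=(1-t_\lambda)b_\lambda+t_\lambda a$, one must show $g_{\sigma(s_\lambda)}(b_\lambda)\rightarrow \sigma(c)$ even though $b_\lambda,s_\lambda\to c$ may drive the flow-line arbitrarily close to a critical point, where its reparametrized speed blows up. I expect this to be the main obstacle, and to be resolved exactly as in the closing argument of Proposition~\ref{proposition:modifiedFlowLinesManifold}: since $f$ is Reeb, $f^{-1}[c,d]$ is compact, so a subnet of $g_{\sigma(s_\lambda)}(b_\lambda)$ converges to some point of $f^{-1}c$; the triangle estimate $d_X(g_{\sigma(s_\lambda)}(b_\lambda),\sigma(b_\lambda))\le d_X(g_{\sigma(s_\lambda)}(b_\lambda),\sigma(s_\lambda))+d_X(\sigma(s_\lambda),\sigma(b_\lambda))$ reduces matters to controlling the displacement of a single flow-line over the vanishing level interval $[b_\lambda,s_\lambda]$, which the local normal form $\R^i\times \Crm(Z)$ of Definition~\ref{definition:stratifiedSpace} is designed to make uniform near the boundary fiber.
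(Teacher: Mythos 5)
Your proposal is correct and takes essentially the same route as the paper: the paper's proof uses precisely your section $x\mapsto g_x$ (as the restriction of the adjoint of $g$ from Proposition~\ref{proposition:modifiedFlowLines}) and the identical homotopy $G(\sigma,t)(b)=g_{\sigma((1-t)b+ta)}(b)$, justified via the identification $\sigma(b)=g_{\sigma(b)}(b)$. If anything you are more scrupulous than the paper, which simply asserts $G(-,0)=\id$ and never addresses the endpoint continuity as $t\to 0$ with $b\to c,d$ that you correctly single out as the delicate point.
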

\begin{proof}
Let~$g\colon [c,d]\times f^{-1}(c,d)\rightarrow X$ be a family of reparametrized sections which exists per Proposition~\ref{proposition:modifiedFlowLines}. Its adjoint~$\bar{g}\colon f^{-1}(c,d)\rightarrow \S_f[c,d]$ restricts to a map~$g_a \colon f^{-1}a\rightarrow \S_f[c,d]$, mapping a point~$x$ in~$f^{-1}a$ to the reparametrized flow-line $g_x$ through~$x$. It is clear that~$\ev_a\circ g_a$ is the identity on~$f^{-1}a$. Conversely, the composition~$ g_a\circ\ev_a$ maps a section~$\sigma\colon [c,d]\rightarrow X$ to the reparametrized flow-line~$g_{\sigma(a)}$ through~$\sigma(a)$. Note that we may identify~$\sigma$ with the section~$b\mapsto g_{\sigma(b)}(b)$.

Denote by~$H\colon~[c,d]\times I\rightarrow [c,d]$ the straight line homotopy~$H(b,t)=(1-t)b+ta$, from which we define a homotopy~$G\colon \S_f[c,d]\times I\rightarrow\S_f[c,d]$. It maps a tuple~$(\sigma,t)$ to the section~$b\mapsto g_{\sigma \circ H(b,t)}(b)$. Notice that~$G(-,0)$ is~$\sigma$ whereas~$G(-,1)$ is~$g_{\sigma(a)}$. We have thus constructed a homotopy from the identity on~$\S_f[c,d]$ to~$ g_a\circ\ev_a$.
\end{proof}

We calculate the torus' homology as means to illustrate the computational implications of Propositions~\ref{prop:easyToCompute} and~\ref{prop:finite}.

\begin{example}
\label{example:torus}
Let~$h\colon T\rightarrow \R$ be the height function on the torus depicted

\begin{center}
\begin{tikzpicture}[scale= 0.7]
    \draw [thick] (0,0) ellipse (2cm and 3cm);
    \draw [thick] (0,0) ellipse (0.5cm and 1cm);
	\draw[->] (3,0)--(5,0);
	\draw [thick] (6,-3.5) -- (6,3.5);

	\node [] at (6,1) {$\bullet$};
	\node [] at (6,-1) {$\bullet$};
	\node [] at (6,3) {$\bullet$};
	\node [] at (6,-3) {$\bullet$};

	\node [right] at (6,1) {$c$};
	\node [right] at (6,-1) {$b$};
	\node [right] at (6,3) {$d$};
	\node [right] at (6,-3) {$a$};
	
	\node [above] at (4, 0) {$h$};

	\node [] at (12,0) {
 \begin{tabular}{ |c | c| } 
 \hline
 $r$ & $f^{-1} r$  \\ 
 \hline
 $a$ & pt  \\ 
 $\frac{a+b}{2}$ & $\partial \Delta^2$  \\ 
 $b$ & $\partial\Delta^2 \vee \partial \Delta^2$  \\ 
 $\frac{b+c}{2}$ & $\partial \Delta^2 \coprod \partial \Delta^2$  \\ 
 $c$ & $\partial\Delta^2 \vee \partial \Delta^2$  \\ 
 $\frac{c+d}{2}$ & $\partial \Delta^2$  \\ 
 $d$ & pt   \\ 
 \hline
\end{tabular}
};
\end{tikzpicture}
\end{center}

It has four critical values~$a$,~$b$,~$c$ and~$d$. Proposition~\ref{prop:finite} allows us to fill out the above table of homotopy types with ease. We determine the first page of the critical spectral sequence, knowing that we need only compute the homology of section spaces between successive critical values (Proposition~\ref{prop:easyToCompute}):

\begin{center}
\begin{tikzpicture}
	\draw (0,0) -- (8.5,0);
	\draw (0,0) -- (0,4.5);
	\draw [->] (4.5,1) -- (3.25,1);
	\draw [->] (4.5,2.5) -- (3.25,2.5);
	
	\node [] at (2,1) {$\Z\oplus \Z\oplus \Z \oplus \Z$};
	\node [] at (2,2.5) {$\Z\oplus \Z^2\oplus \Z$};
	\node [] at (2,4) {$0$};

	\node [] at (6,1) {$0\oplus \Z^2\oplus \Z^2\oplus 0$};
	\node [] at (6,2.5) {$\Z\oplus \Z^2 \oplus \Z$};
	\node [] at (6,4) {$0$};

	\node [] at (8,1) {$0$};
	\node [] at (8,2.5) {$0$};
	\node [] at (8,4) {$0$};

	\node [right] at (8.5,0) {$p$};
	\node [above] at (0,4.5) {$q$};

	\node [above] at (4,1) {$\partial_{1,0}$};
	\node [above] at (4,2.5) {$\partial_{1,1}$};
\end{tikzpicture}
\end{center}
Homology groups are split up according to the above table, e.g.
\[
\page^1_{0,1}=\homology_0 \partial \Delta^1\oplus \homology_0(\partial \Delta^1\coprod \partial \Delta^1)\oplus\homology_0 \partial \Delta^1\simeq \Z\oplus \Z^2 \oplus \Z.
\]
The differentials are induced by subtracting target from source:~$\partial=d_0-d_1=t-s$. For instance, the induced map~$\homology_0 t\colon \homology_0\frac{a+b}{2}\rightarrow \homology_0 f^{-1} b$ is the identity~$1\colon \Z \rightarrow \Z$ in coordinates. This is because the target of a flow-line through~$f^{-1}\frac{a+b}{2}$ meets the path component of~$f^{-1} b$. By such geometric reasoning we deduce
\[
\partial_{1,0}=
\begin{bmatrix}
    -1 & 0  & 0  & 0  \\
    1  & -1 & -1 & 0  \\
    0  & 1  &  1 & -1 \\
    0  & 0  &  0 & 1
\end{bmatrix}
\;\; \text{ and } \;\;
\partial_{1,1}=
\begin{bmatrix}
    1 & -1 & 0  & 0  \\
    1 & 0  & -1 & 0  \\
    0 & 1  &  0 & -1 \\
    0 & 0  &  1 & -1
\end{bmatrix}.
\]
Elementary linear algebra gives the second page:
\begin{center}
\begin{tikzpicture}
	\draw (0,0) -- (8.5,0);
	\draw (0,0) -- (0,4.5);
	
	\node [] at (2,1) {$\Z$};
	\node [] at (2,2.5) {$\Z$};
	\node [] at (2,4) {$0$};
	\node [] at (5,1) {$\Z$};
	\node [] at (5,2.5) {$\Z$};
	\node [] at (5,4) {$0$};
	\node [] at (8,1) {$0$};
	\node [] at (8,2.5) {$0$};
	\node [] at (8,4) {$0$};

	\node [right] at (8.5,0) {$p$};
	\node [above] at (0,4.5) {$q$};
\end{tikzpicture}
\end{center}
And so we read of that~$\homology_0 T \simeq \Z$,~$\homology_2 T\simeq\Z$ whereas~$\homology_1 T$ is an extension of~$\Z$ by~$\Z$ hence~$\Z^2$. The remaining homology groups are trivial.
\end{example}

\section{Reeb spaces}
\label{section:reeb}

The combinatorial Reeb space is introduced in Section~\ref{subsec:comb}. Section~\ref{section:thmA} is merely a recap of Quillen's theorem~A and the theory of collapsing schemes due to K. Brown. The last two Sections are dedicated to clarifying and proving Theorems~\ref{intro:combinatorialReebIsClassicalReeb} and~\ref{intro:combinatorialReebIsGraph}.

\subsection{From topological to combinatorial Reeb spaces}
\label{subsec:comb}
The topological Reeb space is defined for any continuous function~$f\colon X\rightarrow \R$. Given such a function one declares points to be equivalent if they are in the same path component of some fiber:~$x\sim_f y$ if there is a real number~$a$ and~$[x]=[y]$ in~$f^{-1}(a)$. 
\begin{definition}
\label{definition:topologicalReebSpace}
The \emph{topological Reeb space} associated to a continuous function~$f\colon X\rightarrow \R$ is the quotient space~$\reeb_f=X/\sim_f$.
\end{definition}
The topological Reeb space is commonly referred to as the Reeb graph, which surely is accurate for e.g. Morse functions, albeit not accurate in general.

\begin{example}
\label{example:ReebNotGraph}
Consider the Hawaiian earring~$\mathbb{H}$ embedded as a subspace of~$\mathbb{R}^2$:
\begin{center}
\begin{tikzpicture}[scale=1]
\foreach \n in {1,...,1000} 
{
\draw (0,1/\n) circle (1/\n);
}
\end{tikzpicture}
\end{center}
The fibers of the horizontal projection~$\pr_1\colon \mathbb{H}\rightarrow \R$ are all discrete. We thus conclude that the topological Reeb space~$\reeb_{\pr_1}$ is homeomorphic to~$\mathbb{H}$. But the fundamental group of~$\mathbb{H}$ is not free~\cite{de1992fundamental}.
\end{example}

I, for one, would very much like to define a Reeb space whose homotopy type is that of a graph. This could very well serve as motivation for our next definition. Recall that a continuous function~$f\colon X\rightarrow \R$ has an associated section category~$\S_f$. Applying the nerve followed by the level-wise path components functor produces a simplicial set~$\pi_0\nerve\S_f$.
\begin{definition}
\label{definition:combinatorialReeb}
The \emph{combinatorial Reeb space} of a continuous function~$f\colon X \rightarrow \R$ is the simplicial set~$\pi_0\nerve \S_f$.
\end{definition}
Simplicial sets carry a homotopy theory equivalent to the standard theory on topological spaces: The homotopy type of a simplicial set~$S$ is equivalent to that of the topological space~$|S|$. In particular, the homotopy types of~$\pi_0\nerve\S_f$ and~$\reeb_f$ can be compared by realizing~$\pi_0\nerve\S_f$.

\subsection{More background on simplicial sets}
\label{section:thmA}

I will give a brief reminder on Quillen's well-known theorem A~\cite{quillen1973higher} as well as a theorem on collapsing schemes due to K. Brown~\cite{brown1992geometry}. Both are useful to prove Theorems~\ref{intro:combinatorialReebIsClassicalReeb} and~$\ref{intro:combinatorialReebIsGraph}$.

Let us first review Quillen's theorem A. For any functor~$F\colon\C\rightarrow\D$ and object~$d$ in~$\D$, we define the slice category~$F\downarrow d$ as the pullback
\begin{center}
\begin{tikzcd}
 F\downarrow y  \arrow[d] \arrow[r]  &  \Fun([1],\D) \arrow[d, "{(\ev_0,\ev_1)} "] \\
 \C\times[0] \arrow[r, "F\times d"] 				     &  \D\times \D
\end{tikzcd}
\end{center}
where~$\Fun([1],\D)$ is the category of functors~$[1]\rightarrow \D$. More explicitly, an object is a tuple~$(c,m)$ in the product~$\ob\C\times \mor\D$ subject to~$s(m)=F(c)$ and~$t(m)=d$; a morphism~$\alpha\colon (c,m)\rightarrow (c',m')$ is a morphism~$\alpha\colon c\rightarrow c'$ such that~$m= m'\circ F(\alpha)$. This data is commonly depicted

\begin{center}
\begin{tikzcd}
F(c)  \arrow[d, "m"] & \\
d             &
\end{tikzcd}
  and 
\begin{tikzcd}
& F(c)  \arrow[dr, "m"] \arrow[rr, "F(\alpha)"]    &   & F(c') \arrow[dl, "m' "] \\
&  							 & d &
\end{tikzcd}
\end{center}

Quillen's Theorem A gives a sufficient condition as to when~$F$ realizes to a weak homotopy equivalence: If~$\classifying (F\downarrow d)$ is contractible for all~$d$, then~$\classifying F$ is a weak homotopy equivalence.

For~$S$ a simplicial set, the topological space~$|S|$ is a CW complex whose cells are in bijection with the non-degenerate simplices in~$S$. In particular, it makes sense to talk about~$d_i e$, the~$i$th face of a cell~$e$ in~$|S|$. Moreover, we define the~$i$th horn of~$e$, which we will denote~$e_i$, to be the union of all its faces except the~$i$th one. We may safely deform~$|S|$ onto a quotient space~$Y$ by collapsing~$e$ onto~$e_i$ without changing the homotopy type of~$S$. Moreover,~$Y$ is clearly a CW complex again. Brown gives conditions for how to iterate this process of collapsing cells without changing the homotopy type of~$|S|$, while making sure that~$Y$ is still a CW complex.

Partition the non-degenerate simplices of~$S$ into three classes: essential, redundant and collapsible. The cells corresponding to redundant simplices are to be deformed along the collapsible cells, hence they are truly "redundant". So a function~$c$ from redundant simplices to collapsible simplices that maps~$n$--simplices to~$(n+1)$--simplices is required. If~$s$ is redundant and~$cs$ admits another redundant face~$s'$, then we write~$s'\leq s$. This data defines a \emph{collapsing scheme} if i)~$c$ is a bijection from redundant~$n$--simplices to collapsible~$(n+1)$--simplices for all~$n$ and ii) there is no infinite descending chain~$s \geq s'\geq s''\geq\cdots$.

Proposition 1 in~\cite{brown1992geometry} can then be formulated: For a collapsing scheme on~$S$, the quotient map~$|S|\rightarrow Y$ is a weak homotopy equivalence onto a CW complex~$Y$ whose~$n$--cells are in bijection with the essential simplices in~$S$.

\subsection{Proof of Theorem~\ref{intro:combinatorialReebIsClassicalReeb}}
\label{subsec:combistop}
The nerve admits a left adjoint~$\tau_1\colon \sSet\rightarrow \Cat$ commonly referred to as the \emph{fundamental category}. It agrees with the homotopy category when restricted to quasi-categories. A simplicial set~$S$ is sent to the category~$\tau_1 S$ whose object set is~$S_0$; morphism set is the \emph{directed paths}~$S_1\coprod (S_1\times_{S_0} S_1) \coprod\cdots$ modulo the relations~$s_0 x\sim 1_x$ for all~$0$--simplices~$x$ and~$d_1s\sim d_0s \circ d_2s$ for all~$2$--simplices~$s$. More explicitly, a directed path is a tuple~$(e_1,\dots,e_n)$ of edges/$1$--simplices such that the source of~$e_{i+1}$ is the target of~$e_i$;~$d_1e_{i+1}= d_0 e_i$. The corresponding morphism in~$\tau_1$ is denoted~$e_1\cdots e_n$, utilizing the word notation. We define the~\emph{length} of a word~$e_1\cdots e_n$ to be~$n$ if there is no equivalent word on fewer letters. An~$n$--simplex in~$\nerve\tau_1 S$ is a tuple~$(w_1,\dots,w_n)$ of composable words/morphisms. We define the length of~$(w_1,\dots,w_n)$ to be the length of the word~$w_1\cdots w_n$.

Since~$\tau_1$ is left adjoint to~$\nerve$, there is an associated unit map~$\eta\colon S\rightarrow \nerve \tau_1 S$, which is natural in~$S$. It is not a weak homotopy equivalence in general, as pointed out by Thomason in~\cite{thomason1980cat}. But we shall see that the unit always induces a weak homotopy equivalence on combinatorial Reeb spaces. 

\begin{lemma}
\label{lemma:abstract}
Let~$S$ be a simplicial set such that
\begin{enumerate}[i)]
	\item $S$ and~$\nerve \tau_1 S$ only differ in cells of length~$\geq 2$ and
	\item any word of length~$n$ has a unique presentation on~$n$ words.
\end{enumerate}
Then the natural map~$\eta\colon S\rightarrow \nerve\tau_1 S$ is a weak homotopy equivalence.
\end{lemma}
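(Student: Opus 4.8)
The plan is to realize $\nerve\tau_1 S$ as \emph{collapsing}, in the sense of K.\ Brown, onto the image of $\eta$, and then to conclude by two-out-of-three. Concretely, I would produce a collapsing scheme on $\nerve\tau_1 S$ whose \emph{essential} simplices are exactly those lying in $\eta(S)$, and whose redundant and collapsible simplices exhaust everything else. Proposition~1 of~\cite{brown1992geometry} then yields a weak homotopy equivalence $q\colon |\nerve\tau_1 S|\to Y$ onto a CW complex $Y$ whose cells are indexed by the essential simplices. Because $\eta(S)$ is the image of a simplicial map it is a subcomplex, so the essential simplices are closed under taking faces; hence collapsing only ever moves collapsible cells and their redundant faces, all of which lie outside $\eta(S)$, and $q$ restricts to a homeomorphism $|\eta(S)|\xrightarrow{\ \cong\ }Y$. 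Consequently the inclusion $|\eta(S)|\hookrightarrow|\nerve\tau_1 S|$ is a weak equivalence, and since $\eta\colon S\to\eta(S)$ is an isomorphism onto its image, $|\eta|$ is a weak homotopy equivalence.

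First I would justify that $\eta$ is a monomorphism with image a subcomplex isomorphic to $S$, and that the essential simplices so defined are precisely the tuples $(w_1,\dots,w_n)$ all of whose entries are single letters that arise as the spine of a simplex of $S$. Here hypothesis ii) does the work: a word of length $n$ factors uniquely into $n$ letters, so a tuple of composable letters is recovered unambiguously from its associated directed path, and $\eta$ recovers a simplex of $S$ from its spine. Hypothesis i), which forces $S$ and $\nerve\tau_1 S$ to share the same cells of length $\le 1$, anchors the identification so that the essential subcomplex is genuinely $\eta(S)$ and not something strictly larger in low degrees.

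The heart of the argument is the construction of the pairing $c$ between redundant and collapsible simplices on the complement of $\eta(S)$. Every non-essential simplex fails to be a reduced, realized spine at some position; reading the tuple from the left, I would locate the first such position and match $\sigma$ to the unique neighbouring simplex obtained by toggling that position between its ``split'' form (a letter followed by the remainder of a word) and its ``merged'' form (the composite of two adjacent entries in $\tau_1 S$). Declaring the merged member of each pair redundant and the split member collapsible, unique factorization ii) guarantees both that this designated position is well defined and that $c$ is a genuine bijection from redundant $n$-simplices to collapsible $(n+1)$-simplices, with the redundant cell a face of its collapsible partner, exactly as in the model case $(e_2e_1)\mapsto(e_1,e_2)$ on the boundary of a $2$-simplex. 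The total letter-length $\sum_i\ell(w_i)$ is a finite nonnegative integer that strictly controls the relation $\le$ on redundant simplices, so there are no infinite descending chains; this verifies the two axioms of a collapsing scheme.

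The main obstacle is precisely this bookkeeping: I must pin down the canonical toggling position so that $c$ is single-valued, check that the matched face is truly redundant and its partner truly collapsible without a cell ever being assigned two partners (the failure mode if one naively ``splits the first long factor'' everywhere), and confirm well-foundedness of $\le$. All three points hinge on the unique-factorization hypothesis ii), while hypothesis i) is needed only to ensure the scheme leaves the length-$\le 1$ skeleton untouched so that the essential simplices recover $S$ exactly. Once the scheme is in place, Brown's proposition together with the two-out-of-three property finishes the proof with no further analysis.
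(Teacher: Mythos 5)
Your proposal matches the paper's proof essentially step for step: the paper also builds a collapsing scheme on~$\nerve\tau_1 S$ in Brown's sense, pairing each ``merged'' simplex of length~$\geq 2$ with its ``split'' partner obtained by factoring off a single letter (the paper's convention is to split off the \emph{last} letter of the \emph{last} long entry rather than toggling at the leftmost failing position, a mirror-image of your choice), with single-valuedness of~$c$ resting on the unique-factorization hypothesis~ii) and well-foundedness of~$\leq$ on a letter-length bound, exactly as you indicate. The essential simplices are then precisely those of length~$1$, which hypothesis~i) identifies with the cells of~$S$, so Brown's Proposition~1 produces a CW complex~$Y=|S|$ and the collapse map is a homotopy inverse to~$|\eta|$, just as in your conclusion.
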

\begin{proof}
The theory of collapsing schemes, due to K. Brown~\cite{brown1992geometry}, is utilized to construct a homotopy inverse of the realized unit~$|\eta|$. I refer to Section~\ref{section:thmA} for a quick summary of this theory. All morphisms in~$\tau_1S$ will be represented uniquely per assumption ii).

To partition~$\nerve\tau_1 S$ into redundant, collapsible and essential simplices we first declare every~$1$--simplex~$e_1\cdots e_n$ of length~$n\geq 2$ redundant and define its associated collapsible~$2$--simplex
\[
c(e_1\cdots e_n )=(e_1\cdots e_{n-1} , e_n).
\]
This function is well-defined  because such a presentation is unique. For~$m\geq 2$ we declare an~$m$--simplex of the form~$(e_{1,1} \cdots e_{1,i_1},\dots,e_{m,1} \cdots e_{m,i_m})$, whose length is greater than or equal to~$2$, redundant if its not in the image of~$c$. Its associated~$(n+1)$--simplex is then determined by taking the biggest~$k$ such that~$i_k\geq 2$ and factoring~$e_{k,1}\cdots e_{k,i_k}$ as~$(e_{k,1}\cdots e_{k,i_k-1}, e_{k,i_k})$. In other words, we factorize out the last letter not already factorized. The remaining simplices are declared essential. Do note that these are precisely the ones whose length is equal to~$1$.

The function~$c$ is constructed to be a bijection in the sense required by a collapsing scheme. Whereas the second demand follows since a chain associated to a redundant~$n$--simplex~$s$ cannot exceed the length of~$s$ and is therefore necessarily bounded. We thus have a map~$|S|\rightarrow Y$ with~$Y$ a CW complex whose~$n$--cells correspond to essential~$n$--simplices, i.e. those of length~1. Hence~$Y$ is necessarily equal to~$|S|$ and what we have constructed is a homotopy inverse to~$|\eta|$.
\end{proof}

For a combinatorial Reeb space~$\pi_0\nerve\S_f$, a morphism in~$\tau_1\pi_0 \nerve \S_f$ is a word~$[\sigma_1][\sigma_2]\dots,[\sigma_n]$ with representatives~$\sigma_i$ in~$\S_f[a_{i-1},a_i]$ subject to~$[s\sigma_{i+1}]=[t\sigma_{i}]$. So the source and target of successive classes must agree up to path components in fibers. 

\begin{lemma}
\label{lemma:dipath}
A word in~$\tau_1\pi_0\nerve\S_f$ of length~$n$ has a unique presentation on~$n$ letters.
\end{lemma}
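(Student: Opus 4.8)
The plan is to identify the minimal presentation with the unique \emph{reduced} directed path and to prove uniqueness by a confluence argument. First I would unwind the two defining relations of $\tau_1$ in this case. Writing a letter as the class $[\sigma]\in\pi_0\S_f[a,b]$ of a section, the relation $s_0x\sim 1_x$ says that a letter over a degenerate interval $[a,a]$ (a class in a fibre $f^{-1}a$) is an identity, while the relation $d_1s\sim d_0s\circ d_2s$ says, via the restriction homeomorphism $\S_f[a_0,a_2]\cong\S_f[a_0,a_1]\times_{f^{-1}a_1}\S_f[a_1,a_2]$, that a single section may be split at an interior value: $[\tau]=[\tau|_{[a_0,a_1]}]\,[\tau|_{[a_1,a_2]}]$. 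Hence the only elementary moves on a word are deleting an identity letter and \emph{merging} a consecutive pair that arises as the two restrictions of one section (together with their inverses). Call a word \emph{reduced} if no such move shortens it; since each move strictly lowers the number of letters, every word reduces to a reduced one, a reduced word realises the minimal length, and conversely every presentation of a length-$n$ word on exactly $n$ letters is automatically reduced.

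It therefore suffices to show the reduced word is unique. The rewriting (delete identities, merge) terminates because length decreases, so by Newman's lemma I only need local confluence, and there are just two overlaps to resolve. The first is a triple $[\sigma_1][\sigma_2][\sigma_3]$ in which both adjacent pairs merge. Here I would prove a \emph{transitivity of mergeability}: if $[\sigma_1][\sigma_2]$ is realised by a single section $\mu$ over $[a_0,a_2]$ and $[\sigma_2][\sigma_3]$ by a single section $\nu$ over $[a_1,a_3]$, then the whole triple is realised by a single section over $[a_0,a_3]$; consequently both merge-orders collapse the triple to the same one-letter word. To build the required section I would align $\mu$ and $\nu$ along their common middle: the restrictions $\mu|_{[a_1,a_2]}$ and $\nu|_{[a_1,a_2]}$ represent the same class $[\sigma_2]$, so after homotoping $\nu$ — lifting this homotopy through the restriction fibration $\S_f[a_1,a_3]\to\S_f[a_1,a_2]$ (restriction along the cofibration $[a_1,a_2]\hookrightarrow[a_1,a_3]$ is a Hurewicz fibration) — I may assume $\mu$ and $\nu$ agree on $[a_1,a_2]$ and glue them to the desired section.

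The second, and essential, point is that a single merge is \emph{well defined}, equivalently that the level-two Segal map $\pi_0(\nerve\S_f)_2\to\pi_0(\nerve\S_f)_1\times_{\pi_0(\nerve\S_f)_0}\pi_0(\nerve\S_f)_1$ is injective: a section is determined up to path-component by the path-components of its two restrictions. Given $\tau_0,\tau_1\in\S_f[a_0,a_2]$ with restrictions homotopic on each half, I would first lift a homotopy of the left halves through the restriction fibration $\S_f[a_0,a_2]\to\S_f[a_0,a_1]$ to reduce to the case where $\tau_0$ and $\tau_1$ agree on $[a_0,a_1]$, and then compare the right halves inside the fibre over the common junction point. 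The obstruction is exactly a loop in $f^{-1}a_1$ swept out by the two half-homotopies, and the crux of the whole lemma is to kill it — realising that loop as the source-trace of a homotopy of right sections, using once more the fibrancy of the restriction maps of section spaces. This reconciliation of the two junction paths is the step I expect to be the hardest and the one on which the argument really turns.

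Granting injectivity, merging is deterministic and the two overlaps above give local confluence; Newman's lemma then yields a unique reduced word. As any $n$-letter presentation of a length-$n$ morphism is reduced, the presentation on $n$ letters is unique, which is precisely the statement — and is what makes $c(e_1\cdots e_n)=(e_1\cdots e_{n-1},e_n)$ well defined in Lemma~\ref{lemma:abstract}.
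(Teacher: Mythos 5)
You have organized the proof as a rewriting system sensibly, and you have correctly isolated where the content lies: well-definedness of a single merge, i.e.\ injectivity of the $\pi_0$-level Segal map, is exactly what uniqueness of one-letter presentations amounts to. But the proposal has a genuine gap, and it sits at precisely that point. Both of your key steps (aligning $\mu$ and $\nu$ along their common middle, and reconciling the two junction traces) rest on the claim that restriction of section spaces, $\S_f[a_1,a_3]\rightarrow\S_f[a_1,a_2]$, is a Hurewicz fibration because $[a_1,a_2]\hookrightarrow[a_1,a_3]$ is a cofibration. That argument is valid for full mapping spaces but fails for section spaces: a homotopy through sections must satisfy $f\circ\sigma_t=\mathrm{incl}$ at \emph{every} time, and the standard retraction of $[a_1,a_3]\times I$ onto $[a_1,a_3]\times\{0\}\cup[a_1,a_2]\times I$ moves the interval coordinate, destroying the section condition. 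Concretely, take $X=([0,1]\times[0,1])\cup([1,2]\times\{0\})\subset\R^2$ with $f=\pr_1$: a homotopy of sections over $[0,1]$ that moves the value at $1$ off $0$ admits no lift to sections over $[0,2]$, so the restriction map is not a fibration. Note also that the lemma is invoked (through Lemma~\ref{lemma:addComposition} and Theorem~\ref{intro:combinatorialReebIsGraph}) for an \emph{arbitrary} continuous $f$, so you cannot fall back on Reeb hypotheses or flow-lines to repair the lifting. With that tool gone, the junction-loop reconciliation --- which you yourself flag as ``the step on which the argument really turns'' --- is not merely hard but absent, so claim (b), and with it the entire confluence argument, is unproven.

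For contrast, the paper takes a route that avoids any lifting property and indeed avoids setting up confluence at all: given two presentations of $w$ on $n$ letters, it extracts the two sequences of decorations $\bar{a}$ and $\bar{b}$, and if they first differ at index $i$ it restricts the letters to a common subinterval to produce an overlap $[\sigma_i][\sigma']=[\rho']$ with $[\rho']$ a single section class; this manufactures a shorter presentation of $[\sigma_i][\sigma_{i+1}]$, contradicting that $w$ has length $n$. Equal decorations are then argued to force equal letters, since the generating relations of $\tau_1$ concatenate sections and hence alter the decoration sequence. In other words, \emph{minimality of length} does the work that local confluence does in your plan, which is why the paper never needs merging to be a globally well-defined operation on arbitrary words. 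Your observation that uniqueness of one-letter presentations is equivalent to $\pi_0$-Segal injectivity is a sharp one --- it is exactly the case the paper's closing sentence treats most tersely --- but your proposal reduces the lemma to that statement without proving it, whereas the paper's length-minimality argument is structured so as not to isolate it as a standalone obligation.
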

\begin{proof}
Assume that a word~$w$ is presented~$[\sigma_1]\cdots[\sigma_n]$ and~$[\rho_1]\cdots[\rho_n]$. We shall see that for all~$i$ the equality~$[\sigma_i]=[\rho_i]$ holds. From the domains of~$\sigma_i$ and~$\rho_j$ we extract sequences~$\bar{a}=(a_0,\dots,a_n)$ and~$\bar{b}=(b_0,\dots,b_n)$ of real numbers. 

These sequences must be equal. The statement is clear if~$n=1$ since~$a_0=b_0$ and~$a_n=b_n$. If~$n \geq 2$ we conversely assume that~$\bar{a}\neq \bar{b}$. Consider~$i$ the smallest index such that~$a_i\neq b_i$. We assume~$a_i<b_i$ without loss of generality. Introduce the real number~$c=\mathrm{min}(a_{i+1},b_i)$ and define two letters~$[\sigma']=[\sigma_{i+1}|_{[a_i,c]}]$ and~$[\rho']=[\rho_i|_{[b_{i-1},c]}]$ to rewrite~$[\sigma_i][\sigma']=[\rho']$. We observe that~$[\sigma_{i+1}]$ and~$[\rho']$ overlap in~$[\sigma']$ and so there must be a section~$\tau$ satisfying~$[\tau]=[\sigma_i][\sigma_{i+1}]$, contradicting the length of~$w$.

Hence the equality~$[\sigma_1]\cdots[\sigma_n]=[\rho_1]\cdots[\rho_n]$ can only be achieved if the letters are equal. For the only relation connecting them is to concatenate sections up to paths in fibers.
\end{proof}

The simplicial sets~$\pi_0\nerve\S_f$ and~$\nerve\tau_1\pi_0\nerve\S_f$ clearly only differ by simplices of length~$\geq 2$: a word~$(w_1,\dots,w_n)$ can only be of length~$1$ if~$w_i=[\sigma_i]$ and there is a section~$\rho$ such that~$[\rho]=[\sigma_1]\cdots[\sigma_n]$. 
\begin{lemma}
\label{lemma:addComposition}
Let~$f\colon X\rightarrow \R$ be a continuous function. The unit~$\eta\colon \pi_0\nerve\S_f\rightarrow \nerve\tau_1 \pi_0\nerve\S_f$ realizes to a weak homotopy equivalence.
\end{lemma}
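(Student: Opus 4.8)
The plan is to recognize this lemma as the assembly of the three preceding results, applied to the particular simplicial set $S=\pi_0\nerve\S_f$. Lemma~\ref{lemma:abstract} gives a clean, abstract criterion: whenever a simplicial set $S$ satisfies (i) that $S$ and $\nerve\tau_1 S$ differ only in simplices of length $\geq 2$, and (ii) that every word of length $n$ in $\tau_1 S$ admits a unique presentation on $n$ letters, then the unit $\eta\colon S\rightarrow \nerve\tau_1 S$ realizes to a weak homotopy equivalence. So the entire task reduces to verifying hypotheses (i) and (ii) for $S=\pi_0\nerve\S_f$, after which the conclusion is immediate.

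First I would dispatch hypothesis (ii), which is nothing more than Lemma~\ref{lemma:dipath}: it states precisely that a word in $\tau_1\pi_0\nerve\S_f$ of length $n$ has a unique presentation on $n$ letters. Then I would turn to hypothesis (i), which is exactly the content of the paragraph immediately preceding the lemma. The point is to characterize the length-$1$ simplices of $\nerve\tau_1\pi_0\nerve\S_f$: a composable tuple $(w_1,\dots,w_n)$ has length $1$ precisely when each $w_i=[\sigma_i]$ and the concatenation $[\sigma_1]\cdots[\sigma_n]$ is again represented by a single section $\rho$ up to path components in fibers. But such tuples are exactly the simplices already present in $\pi_0\nerve\S_f$, so the two simplicial sets agree on all cells of length $1$ and can differ only on cells of length $\geq 2$. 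With both hypotheses in hand, Lemma~\ref{lemma:abstract} yields the claim directly.

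I do not expect a genuine obstacle in this final step; the substantive work has already been isolated into the earlier lemmas. In particular, the real difficulty was front-loaded into Lemma~\ref{lemma:abstract}, where the collapsing scheme of K.~Brown is constructed and shown to exhibit $|\eta|$ as a homotopy equivalence onto $|S|$ itself, and into Lemma~\ref{lemma:dipath}, whose geometric argument (splitting sections at an intermediate value and deriving a contradiction with minimality of word length) secures the uniqueness of presentations. The only thing to be careful about here is matching the ambient hypotheses: Lemma~\ref{lemma:abstract} is stated for a general simplicial set $S$, so one must confirm that the two conditions it requires are precisely what Lemma~\ref{lemma:dipath} and the preceding characterization of length-$1$ simplices supply, with no extra regularity on $f$ needed. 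Since $f$ is only assumed continuous throughout this subsection, the lemma holds in that generality.
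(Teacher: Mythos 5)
Your proposal matches the paper's proof exactly: the paper verifies hypothesis (i) of Lemma~\ref{lemma:abstract} in the paragraph immediately preceding the statement (length-$1$ simplices of~$\nerve\tau_1\pi_0\nerve\S_f$ are precisely those represented by a single section up to paths in fibers), supplies hypothesis (ii) via Lemma~\ref{lemma:dipath}, and then records the lemma as a direct consequence of Lemmas~\ref{lemma:abstract} and~\ref{lemma:dipath}. Your observation that no hypothesis beyond continuity of~$f$ is needed is likewise consistent with the paper's statement.
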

\begin{proof}
A direct consequence of Lemmas~\ref{lemma:abstract} and~\ref{lemma:dipath}.
\end{proof}
In the proof of Theorem~\ref{intro:combinatorialReebIsClassicalReeb} it will be convenient to utilize Theorem~\ref{intro:mainresult}. But to do so, we must first verify that a Reeb function~$f\colon X\rightarrow \R$ defines a Reeb function on the topological space~$\mathrm{R}_f$. 
\begin{lemma}
\label{lemma:ReebhasReebData}
If~$f\colon X\rightarrow \R$ is a Reeb function, then~$\mathrm{R}_f$ has the homeomorphism type of a~$1$--dimensional CW complex satisfying that the induced function~$\bar{f} \colon \reeb_f \rightarrow \R$ is piecewise linear. 
\end{lemma}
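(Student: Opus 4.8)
The plan is to build an explicit graph $\Gamma$ out of the combinatorial data of the fibres of $f$ and to exhibit a homeomorphism $\psi\colon\Gamma\xrightarrow{\sim}\reeb_f$ under which $\bar f$ becomes affine on cells. By Example~\ref{example:existenceCritical} we may assume $f$ has at least one critical value, and since the critical values form a discrete subset of $\R$ we enumerate them $\dots<c_{i-1}<c_i<c_{i+1}<\dots$. The edges of $\Gamma$ come from the regular regions and the vertices from the critical fibres: for each pair of successive critical values we put one closed edge $[c_i,c_{i+1}]$ for every path component of a regular fibre $f^{-1}(a)$, $a\in(c_i,c_{i+1})$, and for each critical value $c_i$ one vertex for every path component of $f^{-1}(c_i)$. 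The unbounded complementary intervals (if $f$ has a largest or smallest critical value) contribute rays, on which the same flow construction applies over each compact subinterval, subdivided into countably many closed edges by auxiliary vertices. The vertex set is then $\coprod_i\pi_0 f^{-1}(c_i)$ together with the auxiliary ones, and $\bar f$ is declared to be the affine homeomorphism of each edge onto its interval; this makes $\Gamma$ a $1$--dimensional CW complex on which $\bar f$ is piecewise linear by construction.

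Next I would fix the attaching maps using the reparametrised flow. Proposition~\ref{proposition:modifiedFlowLines} supplies a continuous family $g\colon[c_i,c_{i+1}]\times f^{-1}(c_i,c_{i+1})\to X$ of sections; exactly as in the proof of Proposition~\ref{prop:finite} its adjoint yields a homeomorphism $f^{-1}(a)\times(c_i,c_{i+1})\cong f^{-1}(c_i,c_{i+1})$ identifying the $\sim_f$--classes over the open interval with $\pi_0 f^{-1}(a)\times(c_i,c_{i+1})$. For a path component $P$ of $f^{-1}(a)$ and $x\in P$, the endpoints $g_x(c_i)$ and $g_x(c_{i+1})$ lie in well-defined path components of $f^{-1}(c_i)$ and $f^{-1}(c_{i+1})$ (connectedness of $P$ and continuity of $g$), so the edge of $P$ attaches to the corresponding two vertices. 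I then define $\psi\colon\Gamma\to\reeb_f$ cell by cell, sending the closed edge of $P$ at parameter $t$ to $[g_x(t)]\in\reeb_f$ and each vertex to the class of its path component. This is independent of $x\in P$, agrees on shared endpoints, and is continuous on every closed cell; since a map out of a CW complex is continuous as soon as it is continuous on each closed cell, $\psi$ is continuous, and a check that it respects $\bar f$ and is bijective at each $\bar f$--level shows it is a continuous bijection.

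The substance of the proof is to promote $\psi$ to a homeomorphism, that is, to show that the quotient topology on $\reeb_f$ agrees with the weak topology of $\Gamma$; equivalently that the induced map $\reeb_f\to\Gamma$, coming from the evident $q\colon X\to\Gamma$, is continuous. Over each open regular region this is immediate from the flow trivialisation once one knows that the path components of a regular fibre are open, which follows from the local cone structure of Definition~\ref{definition:stratifiedSpace} (the fibres are locally path connected); here the discreteness of the critical values is essential, since it rules out the accumulating components of the kind occurring in Example~\ref{ex:oscillation}. I would then localise the remaining verification by covering $\R$ with the open stars $N_i=(c_{i-1},c_{i+1})$ around the critical values, as in the proof of Theorem~\ref{thm:mainresult}, reducing the claim to the assertion that $\psi$ restricts to a homeomorphism over each $N_i$; a bijective local homeomorphism is a homeomorphism.

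The main obstacle is exactly this local statement at a vertex, whose valence may be infinite: one must show that $q$ is continuous at a point $y\in f^{-1}(c_i)$, i.e.\ that the flow-lines starting near $y$ enter the path component of $y$ at the critical level and that a saturated neighbourhood of $y$ maps onto a weak-open star. Here I would combine the local model $\R^k\times\Crm(Z)$ of Definition~\ref{definition:stratifiedSpace} with the deformation retraction of Lemma~\ref{lemma:oneCriticalStratified} (applied to $f$ restricted to $f^{-1}N_i$, which carries at most one critical value) to control how the adjacent edges limit onto the vertex, and properness to guarantee that the flow extends continuously up to the critical fibre. Granting this, $\psi$ is a homeomorphism, so $\reeb_f$ has the homeomorphism type of a $1$--dimensional CW complex and $\bar f$ is piecewise linear, as claimed.
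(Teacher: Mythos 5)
Your construction coincides with the paper's: there, too, one builds a CW complex with $0$--cells~$\coprod\pi_0 f^{-1}c$ over the critical values and one closed edge per path component of sections between successive critical values (the paper labels edges by~$\pi_0\S_f[c,d]$, which matches your~$\pi_0 f^{-1}(a)$ labels via Proposition~\ref{prop:finite}), declares~$\bar f$ affine on each edge, and maps into~$\reeb_f$ using the flow-lines of Proposition~\ref{proposition:modifiedFlowLines}. Your cell-wise continuity of~$\psi$ is in substance the paper's verification that any~$U$ with~$q^{-1}U$ open in~$X$ is open in the CW topology, which the paper checks on each closed edge by pulling back along a section~$\sigma$ traversing it. Up to that point your proposal and the paper's proof are the same argument.

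The gap is your final paragraph. The step you yourself call ``the main obstacle''---continuity of~$q$ at a point~$y$ of a critical fibre, equivalently continuity of~$\psi^{-1}$---ends with ``Granting this'', and the tools you cite do not obviously deliver it. Lemma~\ref{lemma:oneCriticalStratified} produces a deformation retraction of~$f^{-1}N_i$ onto the critical fibre, which controls homotopy type but not the point-set question at stake: a basic CW-open neighbourhood~$U$ of the vertex~$[y]$ may meet each incident edge~$e$ only in a segment of height~$\epsilon_e$, with~$\epsilon_e\to 0$ when infinitely many edges attach at~$[y]$, and one must rule out sequences~$x_n\to y$ lying in flow tubes over pairwise distinct edges~$e_n$ with~$f(x_n)-c_i>\epsilon_{e_n}$. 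Properness makes the relevant pieces of~$f^{-1}[c_i,c_{i+1}]$ compact, but compact spaces can have infinitely many path components, so finiteness of the local valence is not automatic; an actual argument is needed and is not supplied. A secondary unproven ingredient is the trivialisation~$f^{-1}(a)\times(c_i,c_{i+1})\cong f^{-1}(c_i,c_{i+1})$ you invoke for bijectivity: Proposition~\ref{proposition:modifiedFlowLinesManifold} yields this on manifolds, but in the stratified case Proposition~\ref{proposition:modifiedFlowLines} only provides the family~$g$, and the paper never asserts its adjoint is a homeomorphism---its own proof is arranged so as not to need it, using only that each edge carries some section. (For the record, the paper also does not carry out the local analysis at vertices: it proves the inclusion you do establish and treats the continuity of~$q$ as evident; but in your organisation of the proof this continuity is explicitly the load-bearing step, so leaving it as a plan leaves the proof incomplete.)
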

\begin{proof}
Let~$\mathrm{R}_f$ be the topological Reeb space, presented as a quotient space according to Definition~\ref{definition:topologicalReebSpace}. A topological space~$Q$, homeomorphic to~$\mathrm{R}_f$, will be constructed to satisfy the assertion. We assume, without loss of generality, that every point in~$X$ is either contained in some critical level or in between two critical levels. This can always be achieved by slightly modifying the stratification on~$X$: one may e.g. present~$X$ as a filtered colimit of preimages of closed intervals under the map~$f$.

The set of~$0$--cells is given by~$\coprod \pi_0 f^{-1}c$ ranging over all critical values~$c$, whereas the set of~$1$--cells is given by~$\coprod \pi_0 \S_f[c,d]$ ranging over all successive critical values~$c<d$. The attaching maps comes from the source and target: a~$1$--cell~$e$ labeled by a path component~$[\sigma]$ in~$\pi_0\S_f[c,d]$ admits a source in~$\pi_0 f^{-1}c$;~target in~$\pi_0f^{-1}d$. Denote the resulting CW complex~$Q$. 

Define the piecewise linear map~$\bar{f}\colon Q\rightarrow \R$ as follows. On a closed~$1$--cell~$e\simeq [0,1]$ labeled by a class in~$\pi_0 \S_f[c,d]$ it is the orientation-preserving linear map~$[0,1]\rightarrow [c,d]$. Note that~$\bar{f}\colon Q\rightarrow \R$ is constructed to be piecewise linear.

There is a rather evident surjective map~$q\colon X\rightarrow Q$: If~$x$ is a point contained in some critical fiber~$f^{-1}c$, then it is mapped to the~$0$--cell labeled by~$[x]$ in~$\pi_0f^{-1}c$. Otherwise, consider the~$1$--cell~$e$ corresponding to~$g_x$, the reparametrized flow-line provided by Proposition~\ref{proposition:modifiedFlowLines}. We then send~$x$ to the point in~$e$ mapped to~$f(x)$ under~$\bar{f}$. Note that this map is constructed to be over~$\R$ in the sense that~$f=\bar{f}\circ q$.

It only remains to verify that~$Q$ has the universal quotient topology--for the topological space~$Q$ is clearly in bijection with~$\mathrm{R}_f$. We thus verify that a subset~$U$ of~$Q$ is open if~$q^{-1}U$ is open in~$X$. This is true if for any closed~$1$--cell~$e$, the intersection~$e\cap U$ is open in~$e$. From the construction of~$Q$ it follows that there is a section~$\sigma\colon [c,d]\rightarrow X$ satisfying that~$q\circ \sigma\colon [c,d] \rightarrow Q$ corestricts to a homeomorphism~$e\simeq [c,d]$. But~$e\cap U$ corresponds to~$\sigma ([c,d])\cap q^{-1} U$ under the given homeomorphism, and~$\sigma ([c,d])\cap q^{-1} U$ is open in~$\sigma([c,d])$ given that~$q^{-1}U$ is open in~$X$.
\end{proof}

Let us end this section with a

\begin{proof}[proof of Theorem~\ref{intro:combinatorialReebIsClassicalReeb}]
Lemma~\ref{lemma:ReebhasReebData} allows us to assume that~$f\colon X\rightarrow \R$ induces a Reeb function~$\bar{f}\colon \reeb_f \rightarrow \R$. Theorem~\ref{intro:mainresult} thus guarantees~$\reeb_f\simeq \classifying \S_{\bar{f}}$. So it suffices to prove that the simplicial sets~$\pi_0\nerve\S_f$ and~$\nerve \S_{\bar{f}}$ are weakly equivalent.

Functoriality of~$\S$ induces a (continuous) functor~$\S_f\rightarrow \S_{\bar{f}}$ from the quotient map~$q\colon X\rightarrow \reeb_f$. It maps a section~$\sigma$ of~$f$ to the section~$q\circ \sigma$ of~$\bar{f}$. Sections in the same path components of~$(\nerve\S_f)_1$ are obviously mapped to the same section of~$\bar{f}$, so we have an induced simplicial map~$F\colon \pi_0\nerve \S_f\rightarrow \nerve \S_{\bar{f}}$. A morphism in~$\tau_1\pi_0\nerve\S_f$ is a word~$[\sigma_1]\cdots[\sigma_n]$, represented by sections~$\sigma_i$ which are composable up to paths contained in fibers of~$f$. The unit~$\eta\colon \pi_0\nerve\S_f\rightarrow\nerve\tau_1 \pi_0 \nerve\S_f$ provides a factorization
\[
\pi_0\nerve\S_f\xrightarrow{\eta}\nerve\tau_1 \pi_0 \nerve\S_f \xrightarrow{\nerve G}\nerve\S_{\bar{f}}
\]
of~$F$. We have already seen that~$\eta$ realizes to a weak homotopy equivalence in Lemma~\ref{lemma:addComposition}, so we need only verify that~$\nerve G$ is a weak equivalence. On the level of objects,~$G\colon\tau_1 \pi_0 \nerve\S_f \rightarrow \S_{\bar{f}} $ maps a morphism/word~$[\sigma_1]\cdots[\sigma_n]$ to the section~$(q\circ \sigma_n)\circ\dots\circ (q\circ \sigma_1)$ of~$\bar{f}$. We shall construct an inverse functor~$G^{-1}$ from which we conclude that~$\classifying G$ is in fact a homeomorphism.

Consider a section~$\rho\colon [c,d] \rightarrow \reeb_f$ of~$\bar{f}$ which passes no critical points, except possibly at the end points. Since~$\bar{f}\colon \reeb_f\rightarrow \R$ is assumed to be piecewise linear on a~$1$-dimensional CW complex, the image of~$\rho$ must be contained in some edge~$e$ of~$\reeb_f$. Take any point~$x$ in~$X$ that maps to the interior of~$e$ and define~$G^{-1}\rho$ to be~$[g_x|_{[c,d]}]$, the reparametrized flow-line provided by Proposition~\ref{proposition:modifiedFlowLines}. This class in~$\pi_0\S_f[c,d]$ is independent of the choice of~$x$. Indeed, assume that another point~$y$ is mapped to the interior of~$e$. Any choice of path~$p\colon I \rightarrow f^{-1}(c,d)$, between~$x$ and~$y$, defines a path from~$g_x|_{[c,d]}$ to~$g_y|_{[c,d]}$ in~$\S_f[c,d]$ via the composition
\[
[c,d]\times I\xrightarrow{\id_{[c,d]}\times p} [c,d]\times f^{-1}(c,d)\xrightarrow{g} X.
\]
A general section~$\rho$ of~$\bar{f}$ can only pass finitely many critical values, because~$f$ is Reeb. Whence we factorize it accordingly~$\rho=\rho_n\circ\cdots\circ \rho_1$ and define~$G^{-1}\rho$ to be the word~$G^{-1}\rho_1\cdots G^{-1}\rho_n$.

Applying~$G^{-1}\circ G $ to a word~$[\sigma_1]\cdots[\sigma_n]$ returns~$[g_{x_1}]\cdots [g_{x_n}]$, where~$x_i$ is chosen according to the above description of~$G^{-1}$. We can verify the equality~$[\sigma_i]=[g_{x_i}]$ by considering all reparametrized flow-lines through points in the image of~$[\sigma_i]$. Hence~$G ^{-1}\circ G=\id_{\tau_1\pi_0\nerve\S_f}$. The remaining equality~$G\circ G^{-1}=\id_{\S_{\bar{f}}}$ follows since the induced map~$\bar{f}\colon \reeb_f \rightarrow \R$ is piecewise linear on a~$1$--dimensional CW complex; an edge in~$\reeb_f$ uniquely determines a section that traverses it.
\end{proof}

\subsection{Combinatorial Reeb spaces are graphs}
\label{subsec:combisgraph}
Before we prove the result, I must first elaborate on the meaning of a `graph'.  A simplicial set is a \emph{graph} if it is aspherical--all higher homotopy groups are trivial--and the fundamental group is free for any choice of basepoint. The category~$(\text{graphs})$ of graphs is then the evident full subcategory of~$\sSet$. Our definition of combinatorial Reeb spaces gives a functor
\[
\over \rightarrow \sSet
\]
by mapping~$f\colon X \rightarrow \R$ to~$\pi_0\nerve\S_f$, and we shall see that it does in fact define a functor
\[
\over\rightarrow (\text{graphs}).
\]
In light of Example~\ref{example:ReebNotGraph}, I would argue that this is one advantage over topological Reeb spaces.

Classifying spaces of groupoids are aspherical, a fact which is easily verified by using simplicial homotopy groups. There is a groupoidification functor~$\Cat\rightarrow \Gpoid $ which assigns to a category~$\C$ the groupoid~$\C[\C^{-1}]$ in which all morphisms are formally inverted. It may abstractly be described as the left adjoint of the forgetful functor
\[
\Gpoid\rightarrow \Cat.
\]
For a given category~$\C$, there is an evident functor~$j\colon \C\rightarrow \C[\C^{-1}]$ which is not a weak homotopy equivalence in general: categories can represent all homotopy types, see e.g.~\cite{mcduff1979classifying}, whereas groupoids cannot. But we shall verify that~$j$ does in fact realize to a weak homotopy equivalence for combinatorial Reeb spaces.

Recall that a morphism in~$\tau_1\pi_0\nerve\S_f$ is a word~$[\sigma_1]\cdots[\sigma_n]$, represented by sections that are composable up to paths in fibers. A morphism in the groupoid~$\tau_1\pi_0\nerve\S_f[\tau_1\pi_0\nerve\S_f^{-1}]$ is thus a word~$[\sigma_1]^{i_1}[\sigma_2]^{i_2}\cdots[\sigma_n]^{i_n}$ in which each~$i_j=\pm 1$. The new relations imposed is generated by~$[\sigma][\rho]^{-1}= 1_{[s\sigma]}$ and~$[\rho]^{-1}[\sigma]=1_{[t\sigma]}$ whenever~$[\sigma]=[\rho]$; they are in the same component of the section space~$\S_f[f(s\sigma),f(t\sigma)]$. Geometrically you might want to interpret this as moving up and down, or down and up, along~$[\sigma]$ cancels to the appropriate identity. A word~$[\sigma][\rho]^{-1}$ is said to be \emph{reducible} if there  are factorizations~$[\sigma]=[\sigma_2]\circ [\sigma_1]$ and~$[\rho]=[\rho_1]\circ[\rho_2]$ such that~$[\sigma_2]=[\rho_2]$. In particular,~$[\sigma][\rho]^{-1}=[\sigma_1][\rho_1]^{-1}$. Dually, we declare what it means for~$[\sigma]^{-1}[\rho]$ to be reducible. Intuitively, a part of~$[\sigma]$ may overlap with~$[\rho]$:
\begin{center}
\begin{tikzpicture}

\draw [->] (0,0) -- (2,2);

\draw [<-] (4,0) -- (2,2);

\draw [<->] (2,2) -- (2,4);

\draw [->] (6,0) -- (8,2);
\draw [<-] (10,0) -- (8,2);

\node [below] at (2,0) {Reducible};

\node [below] at (8,0) {Irreducible };

\end{tikzpicture}
\end{center}	
A morphism/word~$[\sigma_1]^{i_1}[\sigma_2]^{i_2}\cdots[\sigma_n]^{i_n}$ in~$\tau_1\pi_0\nerve\S_f[\tau_1\pi_0\nerve\S_f^{-1}]$ is then declared~\emph{irreducible} if the word has length equal to~$n$ and there is no reducible subword. Subject to this added requirement, we extend Lemma~\ref{lemma:dipath} to the groupoidification:

\begin{lemma}
\label{lemma:path}
Any morphism in~$\tau_1\pi_0\nerve\S_f[\tau_1\pi_0\nerve\S_f^{-1}]$ is uniquely presentable as an irreducible word.
\end{lemma}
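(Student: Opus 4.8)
The plan is to extend the argument of Lemma~\ref{lemma:dipath} to the signed setting, splitting the statement into existence and uniqueness of the irreducible presentation. For existence I would start from an arbitrary word $[\sigma_1]^{i_1}\cdots[\sigma_n]^{i_n}$ and apply two rewriting moves: (a) whenever two adjacent letters point in the same direction and their endpoints agree up to path components in a fibre, combine them into a single section—this is the length-reducing relation already exploited in Lemma~\ref{lemma:dipath}; and (b) whenever an adjacent pair $[\sigma]^{i}[\rho]^{-i}$ is reducible, cancel the maximal common segment at the peak or valley, replacing it by $[\sigma_1]^{i}[\rho_1]^{-i}$, or by an identity if the cancellation is total. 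Move (a) strictly decreases the number of letters, while move (b) either decreases it or leaves it fixed but then makes that turning point diverge immediately, so no turning point is ever reduced twice. Since a word has only finitely many turning points, iterating these moves terminates in a word with no reducible subword and of minimal length, i.e.\ an irreducible word.

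The substance of the lemma is uniqueness, and here I would argue exactly as in Lemma~\ref{lemma:dipath}, now recording a sign at each step. Suppose $[\sigma_1]^{i_1}\cdots[\sigma_n]^{i_n}$ and $[\rho_1]^{j_1}\cdots[\rho_m]^{j_m}$ are two irreducible words representing the same morphism; reading the domains off the letters produces two sequences of breakpoint heights $a_0,\dots,a_n$ and $b_0,\dots,b_m$ in $\R$ together with sign vectors. Both words start at the same object $[x_0]$ in $f^{-1}a_0=f^{-1}b_0$, so I would scan them from the left and locate the first index where either a height or a sign disagrees. At this index there are two cases. If the two offending letters point in the same direction, then after restricting the one spanning the larger interval to the smaller one the two letters overlap, and equality of the two words forces a concatenation, producing a presentation on fewer letters exactly as in Lemma~\ref{lemma:dipath} and contradicting minimality of length. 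If instead the two letters point in opposite directions, they meet at a common turning point; equality of the words then exhibits a common segment at that peak or valley, so one of the words contains a reducible subword, contradicting irreducibility. Hence the two height sequences and sign vectors coincide, and the concluding step of Lemma~\ref{lemma:dipath}—that the only relations between letters come from concatenation up to paths in fibres—yields $i_k=j_k$ and $[\sigma_k]=[\rho_k]$ for every $k$.

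The main obstacle I anticipate is the bookkeeping at the turning points: unlike the directed situation of Lemma~\ref{lemma:dipath}, the breakpoint heights are no longer monotone, so ``first disagreement'' has to be organized by comparing the words letter by letter rather than by comparing ordered real sequences, and one must check that the two failure modes above are genuinely exhaustive and that each is excluded by precisely one of the two defining conditions of irreducibility—no length reduction, and no reducible subword. A secondary point to verify is that the cancellation in move~(b) is well defined relative to the fibrewise path relation, that is, that reducibility stated via factorizations $[\sigma]=[\sigma_2]\circ[\sigma_1]$ and $[\rho]=[\rho_1]\circ[\rho_2]$ with $[\sigma_2]=[\rho_2]$ is compatible with the identifications already present in $\pi_0\nerve\S_f$; but this is the same routine verification carried out for concatenation in the construction of $\S_f$ and in Lemma~\ref{lemma:dipath}.
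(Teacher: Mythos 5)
Your proposal takes essentially the same route as the paper: its proof likewise compares two irreducible presentations through their associated sequences of real numbers, locates the first index of disagreement, and derives a contradiction in two cases---letters pointing in opposite directions force a cancellation contradicting irreducibility, while letters pointing in the same direction force a concatenation contradicting minimality of length, exactly as in Lemma~\ref{lemma:dipath}---before concluding that equal sequences force equal letters because every relation alters the associated sequence. Your explicit existence argument via a terminating rewriting process is a reasonable supplement (with the well-definedness of the maximal cancellation, which you flag, being the only delicate point), since the paper proves only uniqueness and leaves existence implicit.
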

\begin{proof}
Assume~$w$ to be presented~$[\sigma_1]^{i_1}\cdots[\sigma_n]^{i_n}$ and~$[\rho_1]^{j_1}\cdots[\rho_n]^{j_n}$, both irreducible. Let~$\bar{a}=(a_0,\dots,a_n)$ and~$\bar{b}=(b_0,\dots,b_n)$ be the sequences obtained by successively considering the domains of sections that appear as representatives in the two words.

These sequences must be equal and so the letters must be equal. Indeed, all relations connecting them alters the associated sequences of real numbers. The statement is clear if both are of length~$1$ since~$a_0=b_0$ and~$a_n=b_n$. If the length is~$\geq 2$ we conversely assume that~$\bar{a}\neq \bar{b}$. Consider~$q\geq 1$ the smallest index such that~$a_q\neq b_q$. We assume~$a_q<b_q$ without loss of generality.

Case 1:~$a_q<a_{q-1}$ and~$b_q>b_{q-1}$. Apply~$[\sigma_1]^{-i_1}\cdots[\sigma_{q-1}]^{-i_q}$ and~$w'=[\rho]_{n}^{-j_{n}}\cdots [\rho_{q+1}]^{-j_{q+1}}$ to~$w$. The result is an equality~$[\sigma_q]^{i_q}\cdots[\sigma_n]^{i_n}w'=[\rho_q]^{j_q}$. For this particular case, we deduce~$i_q=-1$ and~$j_q=1$. But then the equality can only hold if something cancels~$[\sigma_q]^{-1}$, contradicting the irreducibility of~$[\sigma_1]^{i_1}\cdots[\sigma_n]^{i_n}$.

Case 2:~$a_q>a_{q-1}$ and~$b_q > b_{q-1}$, or~$a_q < a_{q-1}$ and~$b_q< b_{q-1}$. These are proved in a similar fashion as the previous case.
\end{proof}

Before presenting the next Lemma I remind that a natural transformation~$ F\Rightarrow G$ between two functors~$\C\rightarrow \D$ is equivalent to a functor~$\C\times [1]\rightarrow \D$ whose restriction to~$0$ and~$1$ in~$[1]$ is~$F$ and~$G$, respectively. A natural transformation~$F\Rightarrow G$ thus defines a homotopy~$\classifying F\sim \classifying G$. See e.g. Segal's paper~\cite{segal1968classifying}. 
\begin{lemma}
\label{lemma:acyclic}
For any combinatorial Reeb space the associated map~$j\colon \tau_1\pi_0\nerve\S_f\rightarrow \tau_1\pi_0\nerve\S_f[\tau_1\pi_0\nerve\S_f^{-1}]$ realizes to a weak homotopy equivalence.
\end{lemma}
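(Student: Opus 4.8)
The plan is to run exactly the collapsing-scheme argument of K.~Brown used in Lemma~\ref{lemma:abstract}, now fed by the unique irreducible presentations of Lemma~\ref{lemma:path}. Write $\C=\tau_1\pi_0\nerve\S_f$ and $\gpoid=\C[\C^{-1}]$, so that $j\colon\C\to\gpoid$ is the groupoidification and the goal is that $\classifying j=|\nerve j|\colon\classifying\C\to\classifying\gpoid$ be a weak homotopy equivalence. By Lemma~\ref{lemma:path} every morphism of $\gpoid$ has a unique irreducible presentation $[\sigma_1]^{i_1}\cdots[\sigma_n]^{i_n}$; in particular $j$ is injective on morphisms, so $\nerve j$ realizes $\nerve\C$ as the simplicial subset of $\nerve\gpoid$ consisting of the composable tuples all of whose letters are positive (every $i_j=+1$). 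I will call these the \emph{positive} simplices. They are closed under faces and degeneracies, since composing positive words stays positive, dropping an end word cannot manufacture an inverted letter, and degeneracies only insert identities.

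First I would build a collapsing scheme on $\nerve\gpoid$ whose essential simplices are exactly the positive ones. Given a non-degenerate tuple carrying at least one inverted letter, I would locate in its total reduced word the first position at which an inverted letter occurs and toggle the cut immediately to its left: a tuple is declared \emph{redundant} when that letter is interior to its block, and its image under the pairing function $c$ is the one-higher-dimensional tuple that splits the block there (so that the letter becomes the front of a new block); the resulting front-of-block tuples are the \emph{collapsible} ones. Lemma~\ref{lemma:path} is precisely what makes this rule unambiguous and makes $c$ a bijection between redundant $n$-simplices and collapsible $(n+1)$-simplices. Acyclicity (condition~ii) holds because any descending chain strictly decreases the total letter length together with the location of the distinguished inverted letter, so it terminates. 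The remaining, non-paired simplices are exactly the positive ones, which are the unmatched essential cells.

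Invoking the collapsing-scheme proposition recalled in Section~\ref{section:thmA} then gives a weak homotopy equivalence $r\colon\classifying\gpoid\to Y$ onto a CW complex $Y$ whose cells biject with the essential, i.e. positive, simplices. Since the positive simplices constitute the subcomplex $\nerve\C$ and the collapse only moves non-essential cells, $Y$ is canonically identified with $\classifying\C$, and $r$ restricts to the identity on this subcomplex; hence $r\circ\classifying j=\id_{\classifying\C}$. As $r$ is a weak equivalence and admits $\classifying j$ as a section up to this identity, $\classifying j$ is a weak equivalence as well.

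The hard part will be the bookkeeping of the collapsing scheme: organizing the matching into an honest bijection with no infinite descending chains while carving out precisely the positive simplices as essential. The genuinely delicate case is a reduced word that \emph{begins} with an inverted letter, for which the naive ``toggle the cut to the left of the first inverse'' rule has no block to its left; I would dispose of these by an auxiliary rule that splits the leading inverted letter off through the outer face $d_0$, again rendered unambiguous by Lemma~\ref{lemma:path}, and I would have to check that this secondary pairing does not create cycles when combined with the primary one. A cleaner alternative worth attempting is Quillen's Theorem~A: prove each comma category $j\downarrow d$ contractible by running the same sign-aware collapse on its nerve. This localizes the sign bookkeeping to a single object $d$ but does not eliminate it, so I expect either route to hinge on the same combinatorial core.
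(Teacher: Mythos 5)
Your main route---a collapsing scheme on~$\nerve\gpoid$, $\gpoid=\tau_1\pi_0\nerve\S_f[\tau_1\pi_0\nerve\S_f^{-1}]$, with the positive simplices essential---founders on exactly the simplices that make~$\nerve\gpoid$ larger than~$\nerve\C$: tuples with adjacent \emph{cancelling} blocks, and this is not the case you flagged. Take a nondegenerate~$2$--simplex~$([\sigma],[\rho]^{-1})$ with~$[\sigma]=[\rho]$. Its total reduced word is empty, so your primary rule has no ``first inverted letter'' to locate; and if you read the unreduced concatenation instead, the inverted letter sits at the front of its block, so your rule wants to pair this simplex downward by merging the two blocks---but the inner face~$d_1$ composes them to an identity, i.e.\ a \emph{degenerate}~$1$--simplex, so the proposed partner is ineligible and~$c$ fails to be a bijection between nondegenerate redundant~$n$--simplices and collapsible~$(n+1)$--simplices. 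This is not an edge case: already for the category~$[1]$ with single arrow~$f$, the nondegenerate simplices of the groupoidification's nerve are the alternating tuples~$(f,f^{-1},f,\dots)$ and~$(f^{-1},f,\dots)$, one per dimension each, and the only consistent matching pairs~$(f,f^{-1})$ \emph{upward} with~$(f,f^{-1},f)$ across the outer face~$d_3$ (its faces~$d_1,d_2$ are degenerate), i.e.\ by appending a letter. No block-splitting rule produces this, and your auxiliary~$d_0$ rule for leading inverses does not either, since merging still degenerates. Such upward matches moreover \emph{increase} total letter length, so your termination argument (descending chains strictly decrease letter length) collapses with the matching. These alternating tuples occur in every dimension---they are why~$\nerve\gpoid$ is infinite dimensional---so they are the combinatorial heart of the lemma, not bookkeeping.

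The paper takes the route you defer to in your closing sentence: Quillen's Theorem~A. For each object~$[x]$ it contracts the comma category~$j\downarrow[x]$, not by a collapsing scheme but by two endofunctors~$\pr_+$ and~$\pr_-$ that delete a leading letter~$[\sigma_1]^{+1}$ (resp.~$[\sigma_1]^{-1}$) from a word, together with natural transformations~$\id\Rightarrow\pr_+$ and~$\pr_-\Rightarrow\id$; realizing these gives homotopies, alternating them finitely often takes any word of bounded length to~$1_{[x]}$, and a colimit argument over the length filtration shows the identity of~$\classifying(j\downarrow[x])$ is homotopic to the constant map. Lemma~\ref{lemma:path} enters there exactly as in your plan, to make words well-defined objects. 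If you insist on salvaging the collapsing-scheme route, you would need a matching in the spirit of Brown's treatment of groups via complete rewriting systems, in which cancelling configurations are paired across outer faces, and then a fresh termination argument compatible with length-increasing matches; that is a substantially different and more delicate scheme than the one you wrote down. (Your identification of~$Y$ with~$\classifying\C$ and the section argument~$r\circ\classifying j=\id$ would also need the essential cells' attaching maps to survive the collapse unchanged, in the style of Lemma~\ref{lemma:abstract}, but that point is moot until the matching exists.)
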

\begin{proof}
Consider an arbitrary object~$[x]$ in~$\tau_1\pi_0\nerve\S_f[\tau_1\pi_0\nerve\S_f^{-1}]$. Quilen's theorem~A reduces the problem to proving that the comma category~$j\downarrow [x]$ is contractible. An object in the comma category is a morphism/word in~$\tau_1\pi_0\nerve\S_f[\tau_1\pi_0\nerve\S_f^{-1}]$ terminating at~$[x]$. All words are presented uniquely according to Lemmas~\ref{lemma:dipath} and~\ref{lemma:path}. We shall define a homotopy from the identity on~$\classifying (j\downarrow [x])$ to the trivial map~$w\mapsto 1_{[x]}$. There are two essential intermediate functors.

The first functor~$\pr_+\colon j\downarrow [x] \rightarrow j\downarrow [x]$ reduces the length of words that start with a letter of the form~$[\sigma_1]$. It maps a non-trivial word~$w=[\sigma_1]^{i_1}\cdots[\sigma_n]^{i_n}$ to
\[
\pr_+ ([\sigma_1]^{i_1}\cdots[\sigma_n]^{i_n})=
\twopartdef{[\sigma_2]^{i_2}\cdots[\sigma_n]^{i_n}}{i_1=1}
{{[\sigma_1]^{i_1}\cdots[\sigma_n]^{i_n}}}{i_1=-1}
\]
A morphism~$w''\colon w\rightarrow w'$ is a factorization~$w=w'\circ j(w'')$ and hence there is a unique choice for~$\pr_+ w''$ yielding a factorization~$\pr_+w=\pr_+w' \circ (\pr_+w'')$. This data comes with a rather evident natural transformation~$\eta_+$ from~$ \id$ to~$ \pr_+$ since~$[\sigma_1]$ defines a morphism from a word~$[\sigma_1][\sigma_2]^{i_2}\cdots[\sigma_n]^{i_n}$ to~$[\sigma_2]^{i_2}\cdots[\sigma_n]^{i_n}$. In other words, we have defined a functor~$H_+\colon (j\downarrow [x]) \times [1]\rightarrow (j\downarrow [x]) $ whose restriction to~$(j\downarrow [x]) \times 0$ is~$\id$, whereas the restriction to~$(j\downarrow [x]) \times 1$ is~$\pr_+$.

The second functor~$\pr_-\colon j\downarrow [x] \rightarrow j\downarrow [x]$ is complementary to~$\pr_+$. It maps a non-trivial word~$w=[\sigma_1]^{i_1}\cdots[\sigma_n]^{i_n}$ to
\[
\pr_- ([\sigma_1]^{i_1}\cdots[\sigma_n]^{i_n})=
\twopartdef{[\sigma_2]^{i_2}\cdots[\sigma_n]^{i_n}}{i_1=-1}
{{[\sigma_1]^{i_1}\cdots[\sigma_n]^{i_n}}}{i_1=1}
\]
Analogous to~$\pr_+$ this data comes with a homotopy~$H_-\colon (j\downarrow [x]) \times [1]\rightarrow (j\downarrow [x]) $. But, in contrast to~$H_+$, this homotopy starts at~$\pr_-$ and terminates at~$\id$. This is because of how~$[\sigma_1]$ defines a morphism from~$[\sigma_2]^{i_2}\cdots[\sigma_n]^{i_n}$ to~$[\sigma_1]^{-1}[\sigma_2]^{i_2}\cdots[\sigma_n]^{i_n}$.

For every object~$w$, we need only alternate~$H_+$ and~$H_-$ a finite number of times to obtain the trivial word~$1_{[x]}$. We thus conclude that the identity on~$(j\downarrow [x])_n$, generated by words of length~$\leq n$, is homotopic to the trivial map for all~$n$. It follows that the identity on~$j\downarrow [x]$ must be homotopic to the trivial map.
\end{proof}

We wrap up the discussion on Combinatorial Reeb spaces with a
\begin{proof}[proof of Theorem~\ref{intro:combinatorialReebIsGraph}]
We have seen in Lemma~\ref{lemma:acyclic} that a Reeb space~$\pi_0\nerve\S_f$ has the homotopy type of its groupoidification. In particular, it must be aspherical. It remains only to verify that the fundamental group is free, regardless of basepoint. So fix a basepoint~$[x]$ and consider~$\pi_1 (\pi_0\nerve\S_f)$ which is isomorphic to the automorphism group at~$[x]$, considered as an object in the groupoidification~$\tau_1\pi_0\nerve\S_f[\tau_1\pi_0\nerve\S_f^{-1}]$. This group admits the irreducible words (Lemma~\ref{lemma:path}) as a free generating set. For a non-trivial irreducible word cannot possibly be reduced further to the unit~$1_{[x]}$.
\end{proof}

\section*{Acknowledgment}
I would like to thank Markus Szymik who helped shape the ideas and contents of this paper through many helpful discussions.

\addcontentsline{toc}{section}{References}
\bibliographystyle{amsalpha}
\bibliography{CombinatorialReebSpaces}

\end{document}